\def\ps@pprintTitle{%
  \let\@oddhead\@empty
  \let\@evenhead\@empty
  \let\@oddfoot\@empty
  \let\@evenfoot\@oddfoot
}
\numberwithin{figure}{section}
\numberwithin{equation}{section} 
\newtheorem{definition}{Definition} [section]            
\newtheorem{theorem}{Theorem}[section]            
\newtheorem{lemma}{Lemma} [section]
\newtheorem{proposition}{Proposition}[section]
\newdefinition{corollary}{Corollary}[section]
\newtheorem{remark}{Remark}
\def\loc{{\mathrm{loc}}}
\def\dchi{\scalebox{1.2}{$\chi$}}
\def\dint{\displaystyle\int}
\def\trace{{\rm{trace}}~}          
\DeclareMathOperator*{\bmo}{BMO}
\DeclareMathOperator*{\essinf}{ess\, inf}
\DeclareMathOperator*{\esssup}{ess\, sup}
\DeclareMathOperator*{\lip}{Lip}
\DeclareMathOperator*{\almostevery}{a.e.~}
\newcommand{\mathd}{\mathrm{d}}
\begin{document}

\begin{frontmatter}

\title{{\bfseries Necessary and sufficient conditions for boundedness of commutators of fractional maximal function in variable Lebesgue spaces on stratified groups }}

\author[mymainaddress]{Wenjiao Zhao} 
\address[mymainaddress]{School of Mathematics, Physics and Finance, Anhui Polytechnic University, Wuhu 241000, China}

\author[mysecondaryaddress]{Jianglong Wu\corref{mycorrespondingauthor}}
\cortext[mycorrespondingauthor]{Corresponding author}
 \ead{jl-wu@163.com}
\address[mysecondaryaddress]{Department of Mathematics, Mudanjiang Normal University, Mudanjiang  157011, China}

\begin{abstract}
In this paper, the main aim is to consider the   mapping properties of the maximal or nonlinear commutator for the fractional maximal operator with the symbols belong to the Lipschitz spaces on variable Lebesgue spaces in the context of  stratified Lie group, with the help of  which some new characterizations to the Lipschitz spaces and nonnegative Lipschitz functions are obtained in the stratified groups context. Meanwhile, Some equivalent relations between the Lipschitz norm and the variable Lebesgue norm are also given.
\end{abstract}

\begin{keyword}
stratified Lie group; fractional maximal function;  variable exponent Lebesgue space;  Lipschitz function

\MSC[2020]
42B35   
\sep 22E30  	
 \sep 43A15 
\sep 46E30  	
\sep    43A80  
\end{keyword}

\end{frontmatter}


\section{Introduction and main results}
\label{sec:introduction}

Nowadays, the study of function spaces arising in the context of groups has received increasing attention.
such as variable Lebesgue spaces\cite{liu2019multilinear,liu2022characterisation}, Orlicz spaces \cite{guliyev2022some} and generalized Morrey spaces \cite{guliyev2021commutators} et al.
Especially, the theory of variable exponent function spaces  has been intensely investigated in the past twenty years since some elementary properties were established by Kov{\'a}{\v{c}}ik  and R{\'a}kosn{\'\i}k in \cite{kovavcik1991spaces}.
It is remarkable that the  fractional maximal operator plays an  instrumental role in harmonic analysis and application areas, such as partial differential equations (PDEs) and potential theory,  since it  is intimately related to the Riesz potential operator, which  is a powerful tool in the investigation of  the smooth function spaces  (see \cite{folland1982hardy,bonfiglioli2007stratified,carneiro2017derivative}).
In addition, the study of commutators has attracted widespread attention, because it is not only closely related to the regularity of solutions for some PDEs \cite{chiarenza1993w2,difazio1993interior,ragusa2004cauchy,bramanti1995commutators}, but also can give some characterizations of   function spaces \cite{janson1978mean,paluszynski1995characterization}.
Meanwhile, many harmonic analysis problems  on stratified Lie groups deserve a further investigation since  most results   of the theory of distribution functions
and Fourier transforms in Euclidean spaces  cannot yet be replicated \cite{liu2019multilinear,guliyev2022some}.

Let $T$ be the classical singular integral operator. The Coifman-Rochberg-Weiss type commutator $[b, T]$ generated by $T$ and a
suitable function $b$ is defined by
\begin{align} \label{equ:commutator-1}
 [b,T]f      & = bT(f)-T(bf).
\end{align}
A well-known result shows that  $[b,T]$ is bounded on $L^{p}(\mathbb{R}^{n})$  for $1<p<\infty$ if and only if $b\in \bmo(\mathbb{R}^{n})$ (the space of bounded mean oscillation functions). The sufficiency was provided  by Coifman et al.\cite{coifman1976factorization} and the necessity was obtained by Janson \cite{janson1978mean}.
Furthermore, Janson \cite{janson1978mean} also established some characterizations of the Lipschitz space $\Lambda_{\beta}(\mathbb{R}^{n})$ via commutator \labelcref{equ:commutator-1} and  proved that  $[b,T]$ is bounded from $L^{p}(\mathbb{R}^{n})$ to $L^{q}(\mathbb{R}^{n})$   for $1<p<n/\beta$ and $1/p-1/q=\beta/n$ with $0<\beta<1$ if and only if  $b\in \Lambda_{\beta}(\mathbb{R}^{n})$  (see also Paluszy\'{n}ski \cite{paluszynski1995characterization}).

Denote by $\mathbb{G}$  and $\mathbb{R}$ the sets of   groups and   real numbers separately. Let   $Q$  be the homogeneous dimension of $\mathbb{G}$, $0\le \alpha <Q$ and $f: \mathbb{G} \to \mathbb{R}$ be a locally integrable function,  the fractional maximal function is given by
\begin{align*}
   \mathcal{M}_{\alpha}(f)(x)    &=  \sup_{B\ni x  \atop B\subset \mathbb{G}}  \dfrac{1}{|B|^{1-\alpha/Q}} \dint_{B}  |f(y)| \mathd y,
\end{align*}
where the supremum is taken over all $ \mathbb{G}$-balls $B\subset \mathbb{G}$  containing $x$ with radius $r>0$, and $|B|$   represents  the Haar measure of the $ \mathbb{G}$-ball $B$ (for the notations and notions, see  \cref{sec:preliminary} below).
When $\alpha=0$, we simply write  $\mathcal{M} $ instead of $\mathcal{M}_{0}$, which is the  Hardy-Littlewood maximal function defined as
\begin{align*}   
  \mathcal{M}(f)(x)    &=  \sup_{B\ni x\atop B\subset \mathbb{G}}  \dfrac{1}{|B|} \dint_{B}  |f(y)| \mathd y.
\end{align*}


Based on  \labelcref{equ:commutator-1},  we  give two different kinds of commutator for the fractional maximal function as follows.
\begin{definition} \label{def.commutator-frac-max}
 Let  $0 \le \alpha<Q$ and $b\in L_{\loc}^{1}(\mathbb{G})$. 
\begin{enumerate}[label=(\roman*)]  
\item The maximal commutator of $\mathcal{M}_{\alpha}$ with $b$ is given by
\begin{align*}
 \mathcal{M}_{\alpha,b} (f)(x) &= \sup_{B\ni x \atop B\subset \mathbb{G}}  \dfrac{1}{|B|^{1-\alpha/Q}} \dint_{B} |b(x)-b(y)| |f(y)| \mathd y,
\end{align*}
where the supremum is taken over all $ \mathbb{G}$-balls $B\subset\mathbb{G}$ containing $x$.
 \item  The nonlinear commutators generated by  $\mathcal{M}_{\alpha}$ and $b$  is defined by
\begin{align*}
[b,\mathcal{M}_{\alpha}] (f)(x) &= b(x) \mathcal{M}_{\alpha} (f)(x) -\mathcal{M}_{\alpha}(bf)(x).
\end{align*}
\end{enumerate}
\end{definition}

When $\alpha=0$, we simply represent by $[b,\mathcal{M}]=[b,\mathcal{M}_{0}]$ and $\mathcal{M}_{b} =\mathcal{M}_{0,b} $.

Although $[b,T]$ is a linear operator, since $[b,\mathcal{M}_{\alpha}] $ is not even a sublinear operator, it  is said to be a  nonlinear commutator.
 It is noteworthy that the maximal commutator $\mathcal{M}_{\alpha,b}$ and the nonlinear commutator $[b,\mathcal{M}_{\alpha}] $   are  inherently different from each other.
For  instance, $\mathcal{M}_{\alpha,b}$ is not only positive but also   sublinear, while $[b,\mathcal{M}_{\alpha}] $ is both not positive and not sublinear.


Denote by $M$  and $M_{\alpha}$ the classical Hardy–Littlewood maximal function and the   fractional maximal function in $\mathbb{R}^{n}$, respectively.
 In  1990,   Milman and Schonbek\cite{milman1990second}  used the real interpolation technique to establish a commutator result  that applies to the Hardy-Littlewood maximal function as well as to a large class of nonlinear operators.
 In 2000, Bastero et al.\cite{bastero2000commutators}    proved the necessary and sufficient condition for the boundedness of  the  nonlinear    commutators $[b,M]$ and $[b,M^{\sharp}]$ on $L^{p}$ spaces.
 Zhang and Wu obtained similar results for the fractional maximal function in \cite{zhang2009commutators} and extended the mentioned results to variable exponent Lebesgue spaces in \cite{zhang2014commutatorsfor,zhang2014commutators}.
When the symbol $b$ belongs to Lipschitz spaces,   Zhang et al. \cite{zhang2018commutators,zhang2019some} gave the necessary and sufficient conditions for the boundedness of  $[b,M_{\alpha}]$ on Orlicz spaces and variable Lebesgue spaces, respectively.
Recently, Guliyev \cite{guliyev2022some,guliyev2023characterizations}
extended the mentioned results to  Orlicz spaces $L^{\Phi} (\mathbb{G})$ over some stratified Lie group when the symbols belong to $\bmo(\mathbb{G})$ spaces  and  Lipschitz spaces $\Lambda_{\beta}(\mathbb{G})$ respectively, and obtained separately some   characterizations for certain subclasses  of $\bmo(\mathbb{G})$ and $\Lambda_{\beta}(\mathbb{G})$.
And Liu et al.\cite{liu2019multilinear,liu2022characterisation} established the  boundedness of some commutators   in variable Lebesgue spaces over  stratified groups. Meanwhile, Wu and Zhao \cite{wu2023characterizationlip,wu2023some}  give some   characterizations of the
Lipschitz spaces on  stratified Lie group.

Inspired by the above literature, the purpose of this paper is to   study  the boundedness of the maximal or nonlinear commutator of the fractional maximal operator with the symbols belong to the Lipschitz spaces on variable Lebesgue spaces in the context of  stratified Lie group, by which we obtain some new characterizations of the Lipschitz spaces and nonnegative Lipschitz functions   in the stratified groups context. In addition, we give some equivalent relations between the Lipschitz norm and the variable Lebesgue norm as well.

To illuminate the results, we first introduce the following notations.

Let $\alpha\ge 0$  and $f\in L_{\loc}^{1}(\mathbb{G})$, for a fixed $\mathbb{G}$-ball $B^{*}$, the fractional maximal function with respect to $B^{*}$   is defined as
\begin{align*}
\mathcal{M}_{\alpha,B^{*}} (f)(x) &= \sup_{ B\ni x \atop  B\subset B^{*}} \dfrac{1}{|B|^{1-\alpha/Q}} \dint_{B} |f(y)| \mathd y,
\end{align*}
where the supremum is taken over all $\mathbb{G}$-balls $B$  such that $x\in B\subset B^{*}$.  We simply substitute $\mathcal{M}_{B^{*}}$ for $\mathcal{M}_{0,B^{*}}$ when $\alpha= 0$.

Our primary results can be  expressed as follows.

\begin{theorem} \label{thm:nonlinear-frac-max-lip-variable}
 Let  $0 <\beta <1$, $0< \alpha<\alpha+\beta<Q$  and $b\in L_{\loc}^{1}(\mathbb{G})$. 
 Then the following assertions are equivalent:
\begin{enumerate}[label=(T.\arabic*)]   
\item   $b\in  \Lambda_{\beta}(\mathbb{G})$ and $b\ge 0$.
    \label{enumerate:thm-nonlinear-frac-max-lip-1}
   \item The commutator $ [b,\mathcal{M}_{\alpha}] $ is bounded from $L^{p(\cdot)}(\mathbb{G})$ to $L^{q(\cdot)}(\mathbb{G})$ for all $(p(\cdot), q(\cdot))\in   \mathscr{B}^{\alpha+\beta}(\mathbb{G}) $.
    \label{enumerate:thm-nonlinear-frac-max-lip-2}
\item  The commutator $ [b,\mathcal{M}_{\alpha}] $ is bounded from $L^{p(\cdot)}(\mathbb{G})$ to $L^{q(\cdot)}(\mathbb{G})$   for some  $(p(\cdot), q(\cdot))\in   \mathscr{B}^{\alpha+\beta}(\mathbb{G}) $.
    \label{enumerate:thm-nonlinear-frac-max-lip-3}
   \item There exists an  $s(\cdot) \in   \mathscr{B} (\mathbb{G}) $   such that
\begin{align} \label{inequ:thm-nonlinear-frac-max-lip-4}
\sup_{B\subset \mathbb{G}} \dfrac{1}{|B|^{\beta/Q}} \dfrac{\Big\| \big(b -|B|^{-\alpha/Q}\mathcal{M}_{\alpha,B} (b) \big) \dchi_{B} \Big\|_{L^{s(\cdot)}(\mathbb{G}) }}{\|\dchi_{B}\|_{L^{s(\cdot)}(\mathbb{G}) }}  < \infty.
\end{align}
    \label{enumerate:thm-nonlinear-frac-max-lip-4}
   \item  \labelcref{inequ:thm-nonlinear-frac-max-lip-4} holds for all $s(\cdot) \in   \mathscr{B} (\mathbb{G}) $.
\label{enumerate:thm-nonlinear-frac-max-lip-5}
\end{enumerate}
\end{theorem}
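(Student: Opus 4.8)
The plan is to run the equivalences along the cycle (T.1)$\Rightarrow$(T.2)$\Rightarrow$(T.3)$\Rightarrow$(T.4)$\Rightarrow$(T.1), supplemented by the single implication (T.1)$\Rightarrow$(T.5); since (T.2)$\Rightarrow$(T.3) and (T.5)$\Rightarrow$(T.4) are trivial (a bound holding for \emph{all} admissible exponents holds for \emph{some}, and ``for all $s(\cdot)$'' forces ``there exists $s(\cdot)$''), closing this loop yields all five equivalences at once. The two genuinely analytic halves are the sufficiency (T.1)$\Rightarrow$(T.2) and the necessity (T.3)$\Rightarrow$(T.4)$\Rightarrow$(T.1), and the sign hypothesis $b\ge 0$ will be seen to enter in an essential way on both sides.

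For the sufficiency I would first record the decisive pointwise observation that $b\ge 0$ \emph{linearises} the nonlinear commutator: writing each term of $b(x)\mathcal{M}_{\alpha}f(x)-\mathcal{M}_{\alpha}(bf)(x)$ as a supremum of ball–averages, using $b(x)\ge 0$ to move $b(x)$ inside the supremum and $|bf|=b|f|$, the elementary inequality $|\sup_{B}A_{B}-\sup_{B}C_{B}|\le \sup_{B}|A_{B}-C_{B}|$ gives $|[b,\mathcal{M}_{\alpha}]f(x)|\le \mathcal{M}_{\alpha,b}f(x)$. This reduces (T.2) to the boundedness of the maximal commutator $\mathcal{M}_{\alpha,b}$ for $b\in\Lambda_{\beta}(\mathbb{G})$, which is the content of the companion result for $\mathcal{M}_{\alpha,b}$ on all pairs of $\mathscr{B}^{\alpha+\beta}(\mathbb{G})$. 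The same circle of ideas drives (T.1)$\Rightarrow$(T.5): for $b\ge 0$ and $x\in B$ one has $|B|^{-\alpha/Q}\mathcal{M}_{\alpha,B}(b)(x)=\sup_{B'\ni x,\,B'\subset B}(|B'|/|B|)^{\alpha/Q}b_{B'}$, and comparing $b_{B'}$ with $b(x)$ through the pointwise Lipschitz inequality $|b(y)-b(x)|\le \|b\|_{\Lambda_{\beta}}d(x,y)^{\beta}$ (upper bound since the exponent $(|B'|/|B|)^{\alpha/Q}\le 1$ and $b_{B'}\ge 0$; lower bound by taking $B'=B$) yields the clean pointwise estimate $|b(x)-|B|^{-\alpha/Q}\mathcal{M}_{\alpha,B}(b)(x)|\lesssim \|b\|_{\Lambda_{\beta}}|B|^{\beta/Q}$. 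Inserting this into the quotient of \labelcref{inequ:thm-nonlinear-frac-max-lip-4} and cancelling $\|\dchi_{B}\|_{L^{s(\cdot)}(\mathbb{G})}$ gives a bound $\lesssim\|b\|_{\Lambda_{\beta}}$ uniform in $B$ and valid for \emph{every} $s(\cdot)\in\mathscr{B}(\mathbb{G})$, which is (T.5).

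For the necessity (T.3)$\Rightarrow$(T.4) I would test the bounded commutator on $f=\dchi_{B}$. A direct computation shows $\mathcal{M}_{\alpha}(\dchi_{B})(x)=|B|^{\alpha/Q}$ for $x\in B$, whence $[b,\mathcal{M}_{\alpha}]\dchi_{B}(x)=|B|^{\alpha/Q}\big(b(x)-|B|^{-\alpha/Q}\mathcal{M}_{\alpha}(b\dchi_{B})(x)\big)$, and on $B$ one compares the unrestricted $\mathcal{M}_{\alpha}(b\dchi_{B})$ with the restricted operator $\mathcal{M}_{\alpha,B}(b)$. Taking $L^{q(\cdot)}(\mathbb{G})$–norms over $B$, using $\|[b,\mathcal{M}_{\alpha}]\dchi_{B}\|_{L^{q(\cdot)}(\mathbb{G})}\le C\,\|\dchi_{B}\|_{L^{p(\cdot)}(\mathbb{G})}$ and the variable–exponent norm ratio $\|\dchi_{B}\|_{L^{p(\cdot)}(\mathbb{G})}/\|\dchi_{B}\|_{L^{q(\cdot)}(\mathbb{G})}\approx|B|^{(\alpha+\beta)/Q}$ valid for $(p(\cdot),q(\cdot))\in\mathscr{B}^{\alpha+\beta}(\mathbb{G})$, the factors of $|B|^{\alpha/Q}$ cancel and one is left precisely with the finiteness of the quotient in \labelcref{inequ:thm-nonlinear-frac-max-lip-4} for the particular exponent $s(\cdot)=q(\cdot)$, establishing the ``some $s(\cdot)$'' statement.

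Finally, (T.4)$\Rightarrow$(T.1) is where the structural information is recovered, and I expect it to be the main obstacle. By the generalized H\"older inequality in $L^{s(\cdot)}(\mathbb{G})$ the quotient bound converts into the averaged control $\tfrac{1}{|B|}\int_{B}\big|b-|B|^{-\alpha/Q}\mathcal{M}_{\alpha,B}(b)\big|\,\mathd y\lesssim|B|^{\beta/Q}$ for every $\mathbb{G}$-ball $B$. Since $|B|^{-\alpha/Q}\mathcal{M}_{\alpha,B}(b)(x)\ge \tfrac{1}{|B|}\int_{B}|b|\,\mathd y$ pointwise on $B$ (take $B'=B$), integrating the one–sided inequality gives $\tfrac{1}{|B|}\int_{B}b^{-}\,\mathd y\lesssim|B|^{\beta/Q}$ with $b^{-}=\max\{-b,0\}$; shrinking $B$ to a Lebesgue point and using $\beta>0$ forces $b^{-}=0$ a.e., that is $b\ge 0$. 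With nonnegativity in hand, $|B|^{-\alpha/Q}\mathcal{M}_{\alpha,B}(b)\ge b_{B}$ on $B$, so the telescoping $|b-b_{B}|\le |b-|B|^{-\alpha/Q}\mathcal{M}_{\alpha,B}(b)|+\big(|B|^{-\alpha/Q}\mathcal{M}_{\alpha,B}(b)-b_{B}\big)$, whose second term averages to $O(|B|^{\beta/Q})$ by the same control, yields $\sup_{B}|B|^{-(1+\beta/Q)}\int_{B}|b-b_{B}|\,\mathd y<\infty$, i.e. $b\in\Lambda_{\beta}(\mathbb{G})$ via the Campanato–type characterization. The delicate technical points are the comparison on $B$ between the global $\mathcal{M}_{\alpha}(b\dchi_{B})$ and the restricted $\mathcal{M}_{\alpha,B}(b)$ (which rests on the doubling geometry of $\mathbb{G}$-balls), and the passage back and forth between the variable–exponent quotient in \labelcref{inequ:thm-nonlinear-frac-max-lip-4} and genuine averaged estimates, which relies throughout on the $\mathscr{B}(\mathbb{G})$– and $\mathscr{B}^{\alpha+\beta}(\mathbb{G})$–membership of the exponents and the attendant boundedness of the fractional maximal operator.
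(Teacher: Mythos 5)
Your proposal is correct, and its skeleton (the cycle (T.1)$\Rightarrow$(T.2)$\Rightarrow$(T.3)$\Rightarrow$(T.4)$\Rightarrow$(T.1), with the trivial implications absorbing (T.5)) coincides with the paper's; the implications (T.1)$\Rightarrow$(T.2) (pointwise domination of $[b,\mathcal{M}_{\alpha}]$ by $\|b\|_{\Lambda_{\beta}}\mathcal{M}_{\alpha+\beta}$) and (T.3)$\Rightarrow$(T.4) (testing on $\chi_{B}$ and using $\|\chi_{B}\|_{L^{p(\cdot)}}\lesssim|B|^{(\alpha+\beta)/Q}\|\chi_{B}\|_{L^{q(\cdot)}}$) are essentially the paper's arguments. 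You diverge genuinely in two places. First, you prove (T.1)$\Rightarrow$(T.5) directly from the two-sided pointwise bound $|b(x)-|B|^{-\alpha/Q}\mathcal{M}_{\alpha,B}(b)(x)|\lesssim\|b\|_{\Lambda_{\beta}}|B|^{\beta/Q}$ on $B$ (valid since $b\ge0$ lets you compare $\sup_{B'\subset B}(|B'|/|B|)^{\alpha/Q}b_{B'}$ with $b(x)$), which is shorter than the paper's (T.2)$\Rightarrow$(T.5), where one factors $1/s(\cdot)=1/(rs(\cdot))+1/(r's(\cdot))$, builds a pair $(p(\cdot),q(\cdot))\in\mathscr{B}^{\alpha+\beta}(\mathbb{G})$ with $q(\cdot)=rs(\cdot)$ and invokes \cref{lem:identity-homogeneous-variable-Lie}; your route does not even use $s(\cdot)\in\mathscr{B}(\mathbb{G})$ for that implication, while the paper's has the side benefit of working for $\beta=0$. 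Second, in (T.4)$\Rightarrow$(T.1) you obtain nonnegativity directly, from $(|b|)_{B}-b_{B}=2(b^{-})_{B}\lesssim|B|^{\beta/Q}$ plus Lebesgue differentiation, and the Lipschitz bound by telescoping through $|B|^{-\alpha/Q}\mathcal{M}_{\alpha,B}(b)\ge b_{B}$; the paper instead proves $b\in\Lambda_{\beta}$ via the $E$/$F$ symmetrization of \cref{lem:frac-Lie-lip-variable-norm} and then recovers $b\ge0$ through the cited characterization \cref{lem:non-negative-max-lip}, which forces it to estimate the extra term $\big||B|^{-\alpha/Q}\mathcal{M}_{\alpha,B}(b)-\mathcal{M}_{B}(b)\big|$ using pointwise bounds for $[|b|,\mathcal{M}_{\alpha}]$ and $[|b|,\mathcal{M}]$; your argument is more elementary and self-contained. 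Two small points you should make explicit rather than leave as ``comparisons'': the identities $\mathcal{M}_{\alpha}(\chi_{B})=|B|^{\alpha/Q}$ and $\mathcal{M}_{\alpha}(b\chi_{B})=\mathcal{M}_{\alpha,B}(b)$ on $B$ must be used as genuine equalities (they are \cref{lem:frac-max-pointwise-assert}\,(iii)), since the trivial one-sided inequality $\mathcal{M}_{\alpha,B}(b)\le\mathcal{M}_{\alpha}(b\chi_{B})$ does not let you pass from $[b,\mathcal{M}_{\alpha}]\chi_{B}$ to the quantity in \labelcref{inequ:thm-nonlinear-frac-max-lip-4}; and the sup-difference inequality behind $|[b,\mathcal{M}_{\alpha}]f|\le\mathcal{M}_{\alpha,b}f$ needs the a.e.\ finiteness of $\mathcal{M}_{\alpha}f$ for $f\in L^{p(\cdot)}(\mathbb{G})$, i.e.\ \cref{lem:frac-max-almost-every}, as well as the dual-norm bound $\|\chi_{B}\|_{L^{s(\cdot)}}\|\chi_{B}\|_{L^{s'(\cdot)}}\lesssim|B|$ from \cref{lem:norm-characteristic-Lie-liu2022characterisation}, which is exactly where $s(\cdot)\in\mathscr{B}(\mathbb{G})$ enters your (T.4)$\Rightarrow$(T.1).
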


\begin{remark}   \label{rem.nonlinear-frac-max-lip}
\begin{enumerate}[ label=(\roman*)]  
\item For the case $\alpha=0$,  the partial  equivalence of
\labelcref{enumerate:thm-nonlinear-frac-max-lip-1,enumerate:thm-nonlinear-frac-max-lip-2}  was proved in \cite{chang2022boundedness} (see Theorem 5.2.2).
Moreover, \labelcref{inequ:thm-nonlinear-frac-max-lip-4} gives a new characterization of nonnegative Lipschitz functions, compared with \cite[Theorem 5.2.2]{chang2022boundedness}.
\item  For the case of $p(\cdot)$ and $q(\cdot)$  being constants, the result above was proved in \cite{wu2023some}, and the equivalence of \labelcref{enumerate:thm-nonlinear-frac-max-lip-1,enumerate:thm-nonlinear-frac-max-lip-2,enumerate:thm-nonlinear-frac-max-lip-4}  was proved in  \cite{wu2023characterizationlip} (for $\alpha=0$).
\item For the Euclidean case, the result above can be found in \cite{zhang2019some}.
\end{enumerate}
\end{remark}

So, we give the following corollary from  \cref{thm:nonlinear-frac-max-lip-variable}.

\begin{corollary}  \label{cor:nonlinear-frac-max-lip-variable}
  Let  $0 <\beta <1$   and $b\in L_{\loc}^{1}(\mathbb{G})$. 
 Then the following statements are equivalent:
\begin{enumerate}[label=(C.\arabic*)]  
\item  $b\in  \Lambda_{\beta}(\mathbb{G})$ and $b\ge 0$.
    \label{enumerate:cor-nonlinear-frac-max-lip-variable-1}
   \item  The commutator $ [b,\mathcal{M}] $ is bounded from $L^{p(\cdot)}(\mathbb{G})$ to $L^{q(\cdot)}(\mathbb{G})$ for all $(p(\cdot), q(\cdot))\in   \mathscr{B}^{\beta}(\mathbb{G}) $.
    \label{enumerate:cor-nonlinear-frac-max-lip-variable-2}
\item  The commutator $ [b,\mathcal{M}] $ is bounded from $L^{p(\cdot)}(\mathbb{G})$ to $L^{q(\cdot)}(\mathbb{G})$   for some  $(p(\cdot), q(\cdot))\in   \mathscr{B}^{\beta}(\mathbb{G}) $.
    \label{enumerate:cor-nonlinear-frac-max-lip-variable-3}
   \item There exists an  $s(\cdot) \in   \mathscr{B} (\mathbb{G}) $   such that
\begin{align} \label{inequ:cor-nonlinear-frac-max-lip-variable-4}
\sup_{B\subset \mathbb{G}} \dfrac{1}{|B|^{\beta/Q}} \dfrac{\Big\| \big(b -\mathcal{M}_{B} (b) \big) \dchi_{B} \Big\|_{L^{s(\cdot)}(\mathbb{G}) }}{\|\dchi_{B}\|_{L^{s(\cdot)}(\mathbb{G}) }}  < \infty.
\end{align}
    \label{enumerate:cor-nonlinear-frac-max-lip-variable-4}
   \item   \labelcref{inequ:cor-nonlinear-frac-max-lip-variable-4} holds for all  $s(\cdot) \in   \mathscr{B} (\mathbb{G}) $.
\label{enumerate:cor-nonlinear-frac-max-lip-variable-5}
\end{enumerate}
\end{corollary}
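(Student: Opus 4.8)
The plan is to read \cref{cor:nonlinear-frac-max-lip-variable} as the endpoint case $\alpha=0$ of \cref{thm:nonlinear-frac-max-lip-variable}: setting $\alpha=0$ gives $\mathcal{M}_{0}=\mathcal{M}$, $\mathscr{B}^{0+\beta}(\mathbb{G})=\mathscr{B}^{\beta}(\mathbb{G})$, and $|B|^{-0/Q}\mathcal{M}_{0,B}=\mathcal{M}_{B}$, so that (C.1)--(C.5) are literally (T.1)--(T.5) evaluated at $\alpha=0$. Since the hypothesis of \cref{thm:nonlinear-frac-max-lip-variable} requires $\alpha>0$, this is not a verbatim substitution, and I would reproduce the cyclic chain of implications used for the theorem while checking that each estimate survives the endpoint. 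Concretely, I would establish (C.1) $\Rightarrow$ (C.2) $\Rightarrow$ (C.3) $\Rightarrow$ (C.4) $\Rightarrow$ (C.1), together with (C.1) $\Rightarrow$ (C.5) $\Rightarrow$ (C.4), which fuses all five statements into one equivalence class.

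The implications (C.2) $\Rightarrow$ (C.3) and (C.5) $\Rightarrow$ (C.4) are trivial passages from ``all'' to ``some''/``exists'', once one knows $\mathscr{B}^{\beta}(\mathbb{G})$ and $\mathscr{B}(\mathbb{G})$ are nonempty. For the sufficiency (C.1) $\Rightarrow$ (C.2) I would use that, because $b\ge0$, the elementary inequality $|\sup_{i}a_{i}-\sup_{i}c_{i}|\le\sup_{i}|a_{i}-c_{i}|$ yields the pointwise bound $|[b,\mathcal{M}]f(x)|\le\mathcal{M}_{b}(f)(x)$, and then the Lipschitz estimate $|b(x)-b(y)|\lesssim\|b\|_{\Lambda_{\beta}}|B|^{\beta/Q}$ for $x,y\in B$ gives $\mathcal{M}_{b}(f)(x)\lesssim\|b\|_{\Lambda_{\beta}}\,\mathcal{M}_{\beta}(f)(x)$; boundedness then follows from the $L^{p(\cdot)}(\mathbb{G})\to L^{q(\cdot)}(\mathbb{G})$ mapping of the genuinely fractional operator $\mathcal{M}_{\beta}$ for $(p(\cdot),q(\cdot))\in\mathscr{B}^{\beta}(\mathbb{G})$. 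This is where $\beta>0$, rather than $\alpha>0$, is exploited, so the step is unaffected by the endpoint. For (C.1) $\Rightarrow$ (C.5), with $b\ge0$ in $\Lambda_{\beta}(\mathbb{G})$ one has the uniform pointwise estimate $0\le\mathcal{M}_{B}b(y)-b(y)\lesssim\|b\|_{\Lambda_{\beta}}|B|^{\beta/Q}$ for $y\in B$, whence $\|(b-\mathcal{M}_{B}b)\chi_{B}\|_{L^{s(\cdot)}}\lesssim\|b\|_{\Lambda_{\beta}}|B|^{\beta/Q}\|\chi_{B}\|_{L^{s(\cdot)}}$ with constant independent of $s(\cdot)$, giving \labelcref{inequ:cor-nonlinear-frac-max-lip-variable-4} for every $s(\cdot)\in\mathscr{B}(\mathbb{G})$.

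For (C.3) $\Rightarrow$ (C.4) I would test the boundedness on the characteristic functions $\chi_{B}$: one checks $\mathcal{M}(\chi_{B})=1$ and $\mathcal{M}(b\chi_{B})\ge\mathcal{M}_{B}b\ge|b|\ge b$ on $B$ (the middle bound from local Lebesgue differentiation, needing no sign assumption on $b$), so that $|[b,\mathcal{M}](\chi_{B})|=\mathcal{M}(b\chi_{B})-b\ge\mathcal{M}_{B}b-b=|b-\mathcal{M}_{B}b|$ on $B$. Feeding this into the assumed operator bound and using the relation $\|\chi_{B}\|_{L^{p(\cdot)}}\approx|B|^{\beta/Q}\|\chi_{B}\|_{L^{q(\cdot)}}$ for $(p(\cdot),q(\cdot))\in\mathscr{B}^{\beta}(\mathbb{G})$ converts the operator norm into \labelcref{inequ:cor-nonlinear-frac-max-lip-variable-4} with $s(\cdot)=q(\cdot)$.

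The decisive step, and the one I expect to be the main obstacle, is (C.4) $\Rightarrow$ (C.1). Here I would first recover nonnegativity: local Lebesgue differentiation again gives $\mathcal{M}_{B}b\ge|b|$ a.e.\ on $B$, so $b-\mathcal{M}_{B}b\le b-|b|=-2b^{-}\le0$ with $b^{-}=\max\{-b,0\}$; hence $\|b^{-}\chi_{B}\|_{L^{s(\cdot)}}/\|\chi_{B}\|_{L^{s(\cdot)}}\lesssim|B|^{\beta/Q}\to0$ as $B$ shrinks to a point, forcing $b^{-}=0$ and $b\ge0$ a.e. Once $b\ge0$, taking the ball $B$ itself in the supremum shows $\mathcal{M}_{B}b\ge b_{B}:=|B|^{-1}\int_{B}b$, and combining $\mathcal{M}_{B}b\ge b_{B}$ with $\mathcal{M}_{B}b\ge b$ gives $|b-b_{B}|\le(\mathcal{M}_{B}b-b)+(\mathcal{M}_{B}b-b_{B})$ pointwise; integrating over $B$ collapses the right-hand side to $2\int_{B}(\mathcal{M}_{B}b-b)$, so by variable-exponent H\"older together with $\|\chi_{B}\|_{L^{s(\cdot)}}\|\chi_{B}\|_{L^{s'(\cdot)}}\approx|B|$ and \labelcref{inequ:cor-nonlinear-frac-max-lip-variable-4},
\[
\frac{1}{|B|^{1+\beta/Q}}\int_{B}|b-b_{B}|\,\mathd y\le\frac{2}{|B|^{1+\beta/Q}}\,\|(b-\mathcal{M}_{B}b)\chi_{B}\|_{L^{s(\cdot)}}\|\chi_{B}\|_{L^{s'(\cdot)}}\le C,
\]
which is the Campanato/mean-oscillation characterization of $\Lambda_{\beta}(\mathbb{G})$. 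The subtlety peculiar to this corollary --- and the reason it does not follow by formally inserting $\alpha=0$ into \cref{thm:nonlinear-frac-max-lip-variable} --- is precisely this nonnegativity mechanism: the pointwise domination $\mathcal{M}_{B}b\ge|b|$ that drives it is available only at $\alpha=0$, since for $\alpha>0$ the normalized operator $|B|^{-\alpha/Q}\mathcal{M}_{\alpha,B}$ no longer dominates $|b|$ (the factor $|B'|^{\alpha/Q}$ vanishes as $B'$ shrinks). Thus the endpoint argument is in fact more direct than its fractional counterpart, and I would either carry out the two displayed mechanisms above or, equivalently, invoke the $\alpha=0$ characterization of \cite{wu2023characterizationlip} for the equivalence of (C.1) and (C.4).
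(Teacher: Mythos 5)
Your argument is correct, but it is not what the paper does: the paper gives no separate proof of \cref{cor:nonlinear-frac-max-lip-variable} at all, presenting it simply as the $\alpha=0$ specialization of \cref{thm:nonlinear-frac-max-lip-variable}, even though that theorem's hypothesis $0<\alpha$ formally excludes the endpoint (the supporting lemmas, e.g.\ \cref{lem:frac-maximal-pointwise,lem:frac-max-pointwise-assert,lem:frac-max-Lie}, do cover $\alpha=0$, so the specialization is harmless and is evidently the intended reading). You instead rerun the cyclic scheme at $\alpha=0$ and simplify it in two places: for (C.1)$\Rightarrow$(C.5) you use the uniform pointwise bound $0\le \mathcal{M}_{B}b-b\lesssim\|b\|_{\Lambda_{\beta}(\mathbb{G})}|B|^{\beta/Q}$ on $B$, which is more direct than the paper's route to (T.2)$\Rightarrow$(T.5) via the exponent splitting $1/s(\cdot)=1/(rs(\cdot))+1/(r's(\cdot))$ and \cref{lem:identity-homogeneous-variable-Lie}; and for (C.4)$\Rightarrow$(C.1) you re-derive the content of \cref{lem:non-negative-max-lip} directly (nonnegativity from $\mathcal{M}_{B}b\ge|b|$ a.e., then $\int_{B}|b-b_{B}|\le 2\int_{B}(\mathcal{M}_{B}b-b)$), correctly exploiting that at $\alpha=0$ the correction term $I_{2}$ appearing in the theorem's proof disappears. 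The paper's route buys a single proof uniform in $\alpha$ with \cref{lem:non-negative-max-lip} quoted as a black box; yours buys a self-contained endpoint proof that never invokes the $\alpha>0$ hypothesis. Two points deserve a line of justification: the step ``$\|b^{-}\chi_{B}\|_{L^{s(\cdot)}}/\|\chi_{B}\|_{L^{s(\cdot)}}\lesssim|B|^{\beta/Q}\to0$ forces $b^{-}=0$'' should pass through H\"older (\cref{lem:holder-inequality-Lie-group}) and $\|\chi_{B}\|_{L^{s(\cdot)}}\|\chi_{B}\|_{L^{s'(\cdot)}}\lesssim|B|$ (\cref{lem:norm-characteristic-Lie-liu2022characterisation}), so as to control the averages $|B|^{-1}\int_{B}b^{-}$ and then apply Lebesgue differentiation on $\mathbb{G}$; and in (C.1)$\Rightarrow$(C.2) one should record that $\mathcal{M}f<\infty$ a.e.\ for $f\in L^{p(\cdot)}(\mathbb{G})$ (the $\alpha=0$ analogue of \cref{lem:frac-max-almost-every}) before writing the pointwise commutator bound. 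Both are routine and do not affect the validity of your proposal.
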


\begin{remark}   \label{rem.cor-nonlinear-frac-max-lip-variable}
   \cref{cor:nonlinear-frac-max-lip-variable} improves the result of \cite[Theorem 5.2.2]{chang2022boundedness} essentially.
\end{remark}

Next,  we consider the mapping properties of
$\mathcal{M}_{\alpha,b}$ on variable Lebesgue spaces in the context of  stratified Lie group when $b$ belongs to a Lipschitz space.

\begin{theorem} \label{thm:frac-max-lip-variable}
 Let  $0 <\beta <1$, $0< \alpha<\alpha+\beta<Q$  and  $b\in L_{\loc}^{1}(\mathbb{G})$.
 Then the following assertions are equivalent:
\begin{enumerate}[label=(T.\arabic*)]   
\item   $b\in  \Lambda_{\beta}(\mathbb{G})$.
    \label{enumerate:thm-frac-max-lip-1}
   \item The commutator $\mathcal{M}_{\alpha,b} $ is bounded from $L^{p(\cdot)}(\mathbb{G})$ to $L^{q(\cdot)}(\mathbb{G})$ for all $(p(\cdot), q(\cdot))\in   \mathscr{B}^{\alpha+\beta}(\mathbb{G}) $.
    \label{enumerate:thm-frac-max-lip-2}
\item  The commutator $\mathcal{M}_{\alpha,b}$ is bounded from $L^{p(\cdot)}(\mathbb{G})$ to $L^{q(\cdot)}(\mathbb{G})$   for some  $(p(\cdot), q(\cdot))\in   \mathscr{B}^{\alpha+\beta}(\mathbb{G}) $.
    \label{enumerate:thm-frac-max-lip-3}
   \item  For some  $s(\cdot) \in   \mathscr{B} (\mathbb{G}) $   such that 
\begin{align} \label{inequ:thm-frac-max-lip-4}
\sup_{B\subset \mathbb{G}} \dfrac{1}{|B|^{\beta/Q}} \dfrac{\Big\| \big(b -b_{B}   \big) \dchi_{B} \Big\|_{L^{s(\cdot)}(\mathbb{G}) }}{\|\dchi_{B}\|_{L^{s(\cdot)}(\mathbb{G}) }}  < \infty.
\end{align}
    \label{enumerate:thm-frac-max-lip-4}
   \item  For all    $s(\cdot) \in   \mathscr{B} (\mathbb{G}) $, we have    \labelcref{inequ:thm-frac-max-lip-4} holds.
\label{enumerate:thm-frac-max-lip-5}
\end{enumerate}
\end{theorem}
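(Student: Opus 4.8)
The plan is to prove the five assertions equivalent by the cyclic chain $\labelcref{enumerate:thm-frac-max-lip-1}\Rightarrow\labelcref{enumerate:thm-frac-max-lip-2}\Rightarrow\labelcref{enumerate:thm-frac-max-lip-3}\Rightarrow\labelcref{enumerate:thm-frac-max-lip-4}\Rightarrow\labelcref{enumerate:thm-frac-max-lip-1}$, supplemented by $\labelcref{enumerate:thm-frac-max-lip-1}\Rightarrow\labelcref{enumerate:thm-frac-max-lip-5}\Rightarrow\labelcref{enumerate:thm-frac-max-lip-4}$ to fold the ``for all'' statement into the same equivalence class. Because $\mathcal{M}_{\alpha,b}$ is positive and sublinear, the testing step is more transparent here than for the nonlinear commutator in \cref{thm:nonlinear-frac-max-lip-variable}. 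For $\labelcref{enumerate:thm-frac-max-lip-1}\Rightarrow\labelcref{enumerate:thm-frac-max-lip-2}$ the cornerstone is the pointwise domination
\[
  \mathcal{M}_{\alpha,b}(f)(x) \lesssim \|b\|_{\Lambda_{\beta}(\mathbb{G})}\,\mathcal{M}_{\alpha+\beta}(f)(x),
\]
valid because whenever $x,y$ share a $\mathbb{G}$-ball $B$ of radius $r$ one has $|b(x)-b(y)|\lesssim\|b\|_{\Lambda_{\beta}}r^{\beta}\sim\|b\|_{\Lambda_{\beta}}|B|^{\beta/Q}$; inserting this extra factor $|B|^{\beta/Q}$ converts the weight $|B|^{-1+\alpha/Q}$ into $|B|^{-1+(\alpha+\beta)/Q}$. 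The $L^{p(\cdot)}(\mathbb{G})\to L^{q(\cdot)}(\mathbb{G})$ boundedness of $\mathcal{M}_{\alpha+\beta}$ for every $(p(\cdot),q(\cdot))\in\mathscr{B}^{\alpha+\beta}(\mathbb{G})$ then delivers $\labelcref{enumerate:thm-frac-max-lip-2}$, while $\labelcref{enumerate:thm-frac-max-lip-2}\Rightarrow\labelcref{enumerate:thm-frac-max-lip-3}$ is trivial.

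For $\labelcref{enumerate:thm-frac-max-lip-3}\Rightarrow\labelcref{enumerate:thm-frac-max-lip-4}$ I would test on $f=\dchi_{B}$. For $x\in B$,
\[
  \mathcal{M}_{\alpha,b}(\dchi_{B})(x)
  \ge \frac{1}{|B|^{1-\alpha/Q}}\left|\dint_{B}\bigl(b(x)-b(y)\bigr)\,\mathd y\right|
  = |B|^{\alpha/Q}\,|b(x)-b_{B}|,
\]
so that $|b-b_{B}|\dchi_{B}\le|B|^{-\alpha/Q}\mathcal{M}_{\alpha,b}(\dchi_{B})$ pointwise. Taking $L^{q(\cdot)}(\mathbb{G})$ norms and invoking the assumed boundedness gives $\|(b-b_{B})\dchi_{B}\|_{L^{q(\cdot)}}\lesssim|B|^{-\alpha/Q}\|\dchi_{B}\|_{L^{p(\cdot)}}$; the defining relation $1/p(\cdot)-1/q(\cdot)=(\alpha+\beta)/Q$ of $\mathscr{B}^{\alpha+\beta}(\mathbb{G})$ supplies the characteristic-function comparison $\|\dchi_{B}\|_{L^{p(\cdot)}}\sim|B|^{(\alpha+\beta)/Q}\|\dchi_{B}\|_{L^{q(\cdot)}}$, producing exactly $\labelcref{inequ:thm-frac-max-lip-4}$ with $s(\cdot)=q(\cdot)$.

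For $\labelcref{enumerate:thm-frac-max-lip-4}\Rightarrow\labelcref{enumerate:thm-frac-max-lip-1}$ I would descend from the variable norm to the scalar Campanato quantity. The generalized Hölder inequality together with the normalization $\|\dchi_{B}\|_{L^{s(\cdot)}}\|\dchi_{B}\|_{L^{s'(\cdot)}}\sim|B|$ yields
\[
  \frac{1}{|B|}\dint_{B}|b-b_{B}|\,\mathd y \lesssim \frac{\bigl\|(b-b_{B})\dchi_{B}\bigr\|_{L^{s(\cdot)}}}{\|\dchi_{B}\|_{L^{s(\cdot)}}},
\]
so $\labelcref{inequ:thm-frac-max-lip-4}$ forces $\sup_{B}|B|^{-\beta/Q}\,\tfrac{1}{|B|}\int_{B}|b-b_{B}|\,\mathd y<\infty$, which is precisely the Campanato-type characterization of $\Lambda_{\beta}(\mathbb{G})$ and hence gives $\labelcref{enumerate:thm-frac-max-lip-1}$.

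Finally, $\labelcref{enumerate:thm-frac-max-lip-1}\Rightarrow\labelcref{enumerate:thm-frac-max-lip-5}$ follows from the uniform estimate $|b(y)-b_{B}|\lesssim\|b\|_{\Lambda_{\beta}}|B|^{\beta/Q}$ for $y\in B$, which makes $\labelcref{inequ:thm-frac-max-lip-4}$ hold for every $s(\cdot)\in\mathscr{B}(\mathbb{G})$, while $\labelcref{enumerate:thm-frac-max-lip-5}\Rightarrow\labelcref{enumerate:thm-frac-max-lip-4}$ is immediate. I expect the main obstacle to be the uniform characteristic-function comparison $\|\dchi_{B}\|_{L^{p(\cdot)}}\sim|B|^{(\alpha+\beta)/Q}\|\dchi_{B}\|_{L^{q(\cdot)}}$ over all balls $B$, both small and large, which rests on the log-Hölder regularity encoded in the definition of $\mathscr{B}^{\alpha+\beta}(\mathbb{G})$; it is this uniformity, rather than any single inequality, that carries the weight of the necessity direction.
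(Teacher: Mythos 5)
Your proposal is correct, and for the core implications (T.1)$\Rightarrow$(T.2), (T.3)$\Rightarrow$(T.4) and (T.4)$\Rightarrow$(T.1) it coincides with the paper's argument: the pointwise domination $\mathcal{M}_{\alpha,b}(f)\lesssim\|b\|_{\Lambda_{\beta}(\mathbb{G})}\mathcal{M}_{\alpha+\beta}(f)$ combined with the variable-exponent boundedness of $\mathcal{M}_{\alpha+\beta}$; testing on $\chi_{B}$ together with $\|\chi_{B}\|_{L^{p(\cdot)}}\le C|B|^{(\alpha+\beta)/Q}\|\chi_{B}\|_{L^{q(\cdot)}}$; and the generalized H\"older inequality plus $\|\chi_{B}\|_{L^{s(\cdot)}}\|\chi_{B}\|_{L^{s'(\cdot)}}\le C|B|$ to reach the Campanato-type characterization of $\Lambda_{\beta}(\mathbb{G})$. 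Where you genuinely depart from the paper is the ``for all $s(\cdot)$'' statement: the paper proves (T.2)$\Rightarrow$(T.5) by choosing $r>Q/(Q-\beta)$, setting $q(\cdot)=rs(\cdot)$, checking that $(p(\cdot),q(\cdot))\in\mathscr{B}^{\alpha+\beta}(\mathbb{G})$ for $1/p(\cdot)=1/q(\cdot)+(\alpha+\beta)/Q$, and then splitting $1/s(\cdot)=1/q(\cdot)+1/(r's(\cdot))$ via H\"older and the identity $\||f|^{r}\|_{s(\cdot)}=\|f\|_{rs(\cdot)}^{r}$; you instead prove (T.1)$\Rightarrow$(T.5) directly from the uniform bound $|b(y)-b_{B}|\le C\|b\|_{\Lambda_{\beta}(\mathbb{G})}|B|^{\beta/Q}$ for $y\in B$ and the monotonicity and homogeneity of the Luxemburg norm. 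Your route is shorter, avoids the auxiliary exponent $rs(\cdot)$ and the closure property of $\mathscr{B}(\mathbb{G})$ under dilation of the exponent, and in fact yields (T.5) for every $s(\cdot)\in\mathscr{P}(\mathbb{G})$, not only $s(\cdot)\in\mathscr{B}(\mathbb{G})$; the paper's H\"older-splitting mechanism is chosen so that the proof runs parallel to the corresponding step for the nonlinear commutator in the first theorem, where the quantity $b-|B|^{-\alpha/Q}\mathcal{M}_{\alpha,B}(b)$ replaces $b-b_{B}$. Two small inaccuracies in your commentary (neither affects validity): only the one-sided estimate $\|\chi_{B}\|_{L^{p(\cdot)}}\le C|B|^{(\alpha+\beta)/Q}\|\chi_{B}\|_{L^{q(\cdot)}}$ is needed, and it follows from the generalized H\"older inequality alone, with no log-H\"older regularity and no maximal-function hypothesis, so the asserted two-sided comparability $\sim$ is both unnecessary and not what the quoted lemma provides; similarly, in (T.4)$\Rightarrow$(T.1) it is the inequality $\|\chi_{B}\|_{L^{s(\cdot)}}\|\chi_{B}\|_{L^{s'(\cdot)}}\le C|B|$ that genuinely uses $s(\cdot)\in\mathscr{B}(\mathbb{G})$, the reverse inequality being automatic from H\"older.
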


When $\alpha=0$, we get the following result from  \cref{thm:frac-max-lip-variable}.
\begin{corollary}  \label{cor:frac-max-lip-variable}
  Let  $0 <\beta <1$   and  $b\in L_{\loc}^{1}(\mathbb{G})$.
 Then the following statements are equivalent:
\begin{enumerate}[label=(C.\arabic*)]   
\item  $b\in  \Lambda_{\beta}(\mathbb{G})$.
    \label{enumerate:cor-frac-max-lip-variable-1}
   \item The commutator $\mathcal{M}_{b} $ is bounded from $L^{p(\cdot)}(\mathbb{G})$ to $L^{q(\cdot)}(\mathbb{G})$ for all $(p(\cdot), q(\cdot))\in   \mathscr{B}^{\beta}(\mathbb{G}) $.
    \label{enumerate:cor-frac-max-lip-variable-2}
\item  The commutator $\mathcal{M}_{b}$ is bounded from $L^{p(\cdot)}(\mathbb{G})$ to $L^{q(\cdot)}(\mathbb{G})$   for some  $(p(\cdot), q(\cdot))\in   \mathscr{B}^{\beta}(\mathbb{G}) $.
    \label{enumerate:cor-frac-max-lip-variable-3}
   \item For some  $s(\cdot) \in   \mathscr{B} (\mathbb{G}) $   such that 
\begin{align} \label{inequ:cor-frac-max-lip-variable-4}
\sup_{B\subset \mathbb{G}} \dfrac{1}{|B|^{\beta/Q}} \dfrac{\Big\| \big(b -b_{B}   \big) \dchi_{B} \Big\|_{L^{s(\cdot)}(\mathbb{G}) }}{\|\dchi_{B}\|_{L^{s(\cdot)}(\mathbb{G}) }}  < \infty.
\end{align}
    \label{enumerate:cor-frac-max-lip-variable-4}
   \item  For all    $s(\cdot) \in   \mathscr{B} (\mathbb{G}) $, we have    \labelcref{inequ:cor-frac-max-lip-variable-4} holds.
\label{enumerate:cor-frac-max-lip-variable-5}
\end{enumerate}
\end{corollary}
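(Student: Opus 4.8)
The plan is to read \cref{cor:frac-max-lip-variable} as the endpoint $\alpha=0$ of \cref{thm:frac-max-lip-variable}. Since the hypothesis of that theorem requires $\alpha>0$ strictly, I would not invoke it verbatim but rather re-run its chain of implications with $\mathcal{M}_{\alpha,b}$ replaced by $\mathcal{M}_{b}$ and the fractional maximal operator $\mathcal{M}_{\alpha+\beta}$ replaced by $\mathcal{M}_{\beta}$; because $\beta>0$, the operator $\mathcal{M}_{\beta}$ is still genuinely fractional, so nothing degenerates. Concretely I would close the cycle (C.1)$\Rightarrow$(C.2)$\Rightarrow$(C.3)$\Rightarrow$(C.4)$\Rightarrow$(C.1) together with (C.1)$\Rightarrow$(C.5) and the trivial (C.5)$\Rightarrow$(C.4), which yields all five equivalences. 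The two structural ingredients I expect to use are the mapping property $\mathcal{M}_{\beta}\colon L^{p(\cdot)}(\mathbb{G})\to L^{q(\cdot)}(\mathbb{G})$ for $(p(\cdot),q(\cdot))\in\mathscr{B}^{\beta}(\mathbb{G})$, and the comparison, uniform in $B$ and in $s(\cdot)\in\mathscr{B}(\mathbb{G})$, between the variable oscillation ratio appearing in \labelcref{inequ:cor-frac-max-lip-variable-4} and the scalar mean oscillation $\frac{1}{|B|^{1+\beta/Q}}\int_{B}|b-b_{B}|$.

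For the sufficiency (C.1)$\Rightarrow$(C.2) I would start from the pointwise domination: if $b\in\Lambda_{\beta}(\mathbb{G})$ then for $x,y$ in a $\mathbb{G}$-ball $B$ one has $|b(x)-b(y)|\lesssim\|b\|_{\Lambda_{\beta}(\mathbb{G})}|B|^{\beta/Q}$, whence
\begin{align*}
\mathcal{M}_{b}(f)(x)\lesssim \|b\|_{\Lambda_{\beta}(\mathbb{G})}\,\mathcal{M}_{\beta}(f)(x).
\end{align*}
Taking $L^{q(\cdot)}(\mathbb{G})$ norms and applying the boundedness of $\mathcal{M}_{\beta}$ gives (C.2), and (C.2)$\Rightarrow$(C.3) is trivial. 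For (C.3)$\Rightarrow$(C.4) I would test the bounded operator on $f=\dchi_{B}$: for $x\in B$,
\begin{align*}
\mathcal{M}_{b}(\dchi_{B})(x)\ge \frac{1}{|B|}\int_{B}|b(x)-b(y)|\,\mathd y\ge |b(x)-b_{B}|,
\end{align*}
so that $\big\|(b-b_{B})\dchi_{B}\big\|_{L^{q(\cdot)}(\mathbb{G})}\le\big\|\mathcal{M}_{b}(\dchi_{B})\big\|_{L^{q(\cdot)}(\mathbb{G})}\lesssim\|\dchi_{B}\|_{L^{p(\cdot)}(\mathbb{G})}$. The final step here is to convert the $L^{p(\cdot)}$/$L^{q(\cdot)}$ norms of $\dchi_{B}$ into the factor $|B|^{\beta/Q}$, using the defining relation of $\mathscr{B}^{\beta}(\mathbb{G})$ and the standard estimate for $\|\dchi_{B}\|_{L^{p(\cdot)}(\mathbb{G})}$; this produces \labelcref{inequ:cor-frac-max-lip-variable-4} with $s(\cdot)=q(\cdot)\in\mathscr{B}(\mathbb{G})$.

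To recover (C.1) from (C.4), I would apply the generalized H\"older inequality together with $\|\dchi_{B}\|_{L^{s(\cdot)}(\mathbb{G})}\,\|\dchi_{B}\|_{L^{s'(\cdot)}(\mathbb{G})}\approx|B|$ to get
\begin{align*}
\frac{1}{|B|}\int_{B}|b-b_{B}|\lesssim \frac{\big\|(b-b_{B})\dchi_{B}\big\|_{L^{s(\cdot)}(\mathbb{G})}}{\|\dchi_{B}\|_{L^{s(\cdot)}(\mathbb{G})}},
\end{align*}
so that \labelcref{inequ:cor-frac-max-lip-variable-4} forces $\sup_{B}\frac{1}{|B|^{1+\beta/Q}}\int_{B}|b-b_{B}|<\infty$, which by the Meyers--Campanato-type characterization of $\Lambda_{\beta}(\mathbb{G})$ (valid precisely because $0<\beta<1$) gives $b\in\Lambda_{\beta}(\mathbb{G})$. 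Finally (C.1)$\Rightarrow$(C.5) follows again from the pointwise Lipschitz bound $|b(x)-b_{B}|\lesssim\|b\|_{\Lambda_{\beta}(\mathbb{G})}|B|^{\beta/Q}$ on $B$, which yields the oscillation estimate uniformly over all $s(\cdot)\in\mathscr{B}(\mathbb{G})$, and (C.5)$\Rightarrow$(C.4) is immediate. I expect the main obstacle to be the step (C.3)$\Rightarrow$(C.4): making the passage from the single operator bound to the oscillation ratio rigorous requires the correct, uniform control of $\|\dchi_{B}\|_{L^{p(\cdot)}(\mathbb{G})}$ and $\|\dchi_{B}\|_{L^{q(\cdot)}(\mathbb{G})}$ together with the exact form of the $\mathscr{B}^{\beta}(\mathbb{G})$ relation on stratified groups; the remaining implications are comparatively routine once the comparability of the variable oscillation ratio with the scalar mean oscillation is established.
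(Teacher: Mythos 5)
Your proposal is correct, and in substance it coincides with how the paper actually handles this statement: the paper simply reads \cref{cor:frac-max-lip-variable} off as the $\alpha=0$ case of \cref{thm:frac-max-lip-variable}, and the proof of that theorem goes through verbatim at $\alpha=0$ because the smoothing index $\alpha+\beta=\beta$ stays positive; your choice to re-run the chain rather than cite the theorem is a reasonable way to paper over the formal hypothesis $\alpha>0$, and your individual implications use exactly the paper's ingredients (pointwise domination $\mathcal{M}_{b}f\lesssim\|b\|_{\Lambda_{\beta}(\mathbb{G})}\mathcal{M}_{\beta}f$ with \cref{lem:frac-max-Lie} for (C.1)$\Rightarrow$(C.2); testing on $\chi_{B}$ and \cref{lem:norm-characteristic-Lie-liu2022characterisation}\labelcref{enumerate:charact-norm-fraction-Lie-variable} for (C.3)$\Rightarrow$(C.4) with $s(\cdot)=q(\cdot)$; generalized H\"older, \cref{lem:norm-characteristic-Lie-liu2022characterisation}\labelcref{enumerate:charact-norm-Lie-variable-dual} and \cref{lem:2.2-li2003lipschitz} for (C.4)$\Rightarrow$(C.1)). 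The one genuine difference is how you reach (C.5): you deduce it directly from (C.1) via the pointwise bound $|b(x)-b_{B}|\lesssim\|b\|_{\Lambda_{\beta}(\mathbb{G})}|B|^{\beta/Q}$ on $B$, which bounds the oscillation ratio uniformly in $B$ and in $s(\cdot)$ (indeed without even needing $s(\cdot)\in\mathscr{B}(\mathbb{G})$), whereas the paper proves (T.2)$\Rightarrow$(T.5) by choosing $r>Q/(Q-\beta)$, setting $q(\cdot)=rs(\cdot)$, and transferring the $L^{q(\cdot)}$ estimate to $L^{s(\cdot)}$ via H\"older and \cref{lem:identity-homogeneous-variable-Lie}. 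Your route is shorter and more elementary; the paper's route has the marginal advantage of deriving the oscillation condition from the operator bound alone (and, as noted in their remark, persists at $\beta=0$), but for this corollary both close the equivalences correctly, with the only caveat being the routine uniform control of $\|\chi_{B}\|_{L^{p(\cdot)}(\mathbb{G})}$ versus $\|\chi_{B}\|_{L^{q(\cdot)}(\mathbb{G})}$ that you already flag in step (C.3)$\Rightarrow$(C.4).
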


\begin{remark}   \label{rem.frac-max-lip-variable}
\begin{enumerate}[ label=(\roman*)]  
\item In \cref{cor:frac-max-lip-variable},   the    equivalence of
\labelcref{enumerate:cor-frac-max-lip-variable-1,enumerate:cor-frac-max-lip-variable-2}  was proved in \cite{chang2022boundedness} (see Theorem 5.2.1).
\item  For the case of $p(\cdot)$ and $q(\cdot)$  being constants, the result above was proved in \cite{wu2023some} (for $0<\alpha<Q$), and the equivalence of \labelcref{enumerate:cor-frac-max-lip-variable-1,enumerate:cor-frac-max-lip-variable-2,enumerate:cor-frac-max-lip-variable-3}  was proved in  \cite{wu2023characterizationlip} (for $\alpha=0$).
\item  In the Euclidean case, the results above can be found in \cite{zhang2019some} (for $0<\alpha<Q$) and \cite{zhang2019characterization} (for $ \alpha=0$) separately.
\end{enumerate}
\end{remark}

 Throughout this paper, the letter $C$  always stands for a constant  independent of the main parameters involved and whose value may differ from line to line.
In addition, we  give some notations. Here and hereafter $|E|$  will always denote the Haar measure of a measurable set $E$ of $\mathbb{G}$ and by  \raisebox{2pt}{$\dchi_{E}$} denotes the  characteristic function of a measurable set $E \subset\mathbb{G}$.
 And for any $f\in L_{\loc}^{1}(\mathbb{G})$, set $f_{E}=\frac{1}{|E|} \int_{E} f(x) \mathd x$.


\section{Preliminaries and lemmas}
\label{sec:preliminary}

To prove the principal results, we first recall some necessary notions and remarks.
Below  we  give some preliminaries concerning stratified Lie groups (or so-called
Carnot groups). We refer the reader to  \cite{folland1982hardy,bonfiglioli2007stratified,stein1993harmonic}.

\subsection{Lie group $\mathbb{G}$}


 \begin{definition}
 \label{def:stratified-Lie-algebra-krantz1982lipschitz}
  Let $m\in \mathbb{Z}^{+}$, $\mathcal{G}$  be a finite-dimensional Lie algebra,  $[X, Y] = XY - YX \in \mathcal{G}$ be Lie bracket with $X,Y  \in \mathcal{G}$.
\begin{enumerate}[label=(\arabic*)]  
\item If $Z \in \mathcal{G}$ is an $m^{\text{th}}$  order Lie bracket and $W \in \mathcal{G}$, then $[Z,W]$ is an $(m + 1)^{\text{st}}$ order  Lie bracket.
\item  We call $\mathcal{G}$   a nilpotent Lie algebra of step  $m$ if  $m$ is the smallest integer for which all Lie brackets of order $m+1$ are zero.
\item   We say that  a   Lie algebra $\mathcal{G}$   is  stratified  if there is a direct sum vector space decomposition
\begin{align}\label{equ:lie-algebra-decomposition}
 \mathcal{G} =\oplus_{j=1}^{m} V_{j}  = V_{1} \oplus  \cdots \oplus  V_{m}
\end{align}
such that $\mathcal{G}$ is $m$-step nilpotent, namely,
\begin{align*}
 [V_{1},V_{j}] =
 \begin{cases}
 V_{j+1} &  1\le j \le  m-1  \\
 0 & j\ge m
\end{cases}
\end{align*}
 holds.
\end{enumerate}
\end{definition}

It is not difficult to find that the above $V_{1}$  generates the whole of the Lie algebra $\mathcal{G}$ by taking Lie brackets since each element of $ V_{j}~(2\le j \le m)$  is a linear combination of $(j-1)^{\text{th}}$ order Lie bracket  of elements of $ V_{1}$.

With the help of the related  notions of Lie algebra (see \cref{def:stratified-Lie-algebra-krantz1982lipschitz}), the following definition can be obtained.
 \begin{definition}\label{def:stratified-Lie-group}
  Let $\mathbb{G}$ be a finite-dimensional, connected and simply-connected Lie group   associated with Lie algebra $\mathcal{G}$. Then
\begin{enumerate}[label=(\arabic*)]  
\item  $\mathbb{G}$  is called nilpotent if its Lie algebra $\mathcal{G}$ is nilpotent.
\item  $\mathbb{G}$ is said to be stratified if its Lie algebra $\mathcal{G}$ is stratified.
\item  We call $\mathbb{G}$  homogeneous if it is a nilpotent Lie group whose Lie algebra $\mathcal{G}$ admits a family of dilations $\{\delta_{r}\}$, namely, for $r>0$, $X_{k}\in V_{k}~(k=1,\ldots,m)$,
\begin{align*}
  \delta_{r} \Big( \sum_{k=1}^{m} X_{k} \Big)  =  \sum_{k=1}^{m} r^{k} X_{k},
\end{align*}
which are Lie algebra automorphisms.
\end{enumerate}
\end{definition}

\begin{remark}  \label{rem:lie-algebra-decom-zhu2003herz}  %
Set $\mathcal{G} =  \mathcal{G}_{1}\supset  \mathcal{G}_{2} \supset \cdots \supset  \mathcal{G}_{m+1} =\{0\}$   denote the lower central series of  $\mathcal{G}$, and $X=\{X_{1},\dots,X_{n}\}$ be a basis for $V_{1}$ of $\mathcal{G}$.
\begin{enumerate}[label=(\roman*) ]  
\item  (see \cite{zhu2003herz})  The direct sum decomposition  \labelcref{equ:lie-algebra-decomposition} can be constructed by identifying each $\mathcal{G}_{j}$ as a vector subspace of $\mathcal{G}$ and setting $ V_{m}=\mathcal{G}_{m}$ and $ V_{j}=\mathcal{G}_{j}\setminus \mathcal{G}_{j+1}$ for $j=1,\ldots,m-1$.
\item   (see \cite{folland1979lipschitz}) The number $Q=\trace A =\sum\limits_{j=1}^{m} j\dim(V_{j})$ is called the homogeneous dimension of $\mathcal{G}$, where $A$ is a diagonalizable linear transformation of  $\mathcal{G}$ with positive eigenvalues.
\item  (see \cite{zhu2003herz} or \cite{folland1979lipschitz})   
The number  $Q$ is also called the homogeneous dimension of $\mathbb{G}$  since $\mathd(\delta_{r}x)=r^{Q}\mathd x$ for all $r>0$, and
\begin{align*}
 Q = \sum_{j=1}^{m} j \dim(V_{j}) = \sum_{j=1}^{m} \dim(\mathcal{G}_{j}).
\end{align*}
\end{enumerate}
\end{remark}


The following properties can be found in \cite{ruzhansky2019hardy}(see Proposition 1.1.1, 
or Proposition 1.2 in \cite{folland1982hardy}).

\begin{proposition}[Exponential mapping and Haar measure] \label{pro:2.1-yessirkegenov2019}
 Let $\mathcal{G}$ be a nilpotent Lie algebra, and let $\mathbb{G}$ be the corresponding connected and simply-connected nilpotent Lie group. Then we have
\begin{enumerate}[leftmargin=2em,label=(\arabic*),itemindent=1.0em]  
\item   The exponential map  $\exp: \mathcal{G} \to \mathbb{G}$  is a diffeomorphism. Furthermore, the group law $(x,y) \mapsto xy$ is a polynomial map if  $\mathbb{G}$ is identified with $\mathcal{G}$ via $\exp$.
\item  If $\lambda$ is a Lebesgue measure on  $\mathcal{G}$, then $\exp\lambda$ is a bi-invariant Haar measure on  $\mathbb{G}$ (or a bi-invariant Haar measure $\mathd  x$ on  $\mathbb{G}$  is just the lift of Lebesgue measure on  $\mathcal{G}$ via $\exp$).
\end{enumerate}
\end{proposition}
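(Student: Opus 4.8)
The plan is to treat the two assertions separately, both resting on the nilpotency of $\mathcal{G}$ through the single mechanism that every iterated bracket of length exceeding the step $m$ vanishes, so that all relevant exponential series terminate after finitely many terms.

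For assertion (1), I would begin with the Baker--Campbell--Hausdorff (BCH) formula
\[
\exp X \cdot \exp Y = \exp\Big(X + Y + \tfrac12[X,Y] + \tfrac{1}{12}\big([X,[X,Y]] + [Y,[Y,X]]\big) + \cdots\Big),
\]
and observe that, since $\mathcal{G}$ is $m$-step nilpotent, every bracket of length greater than $m$ is zero; hence the BCH series is a finite sum and the right-hand side is a polynomial in the coordinates of $X$ and $Y$. Transporting the group law to $\mathcal{G}$ via $\exp$ therefore yields a polynomial map, which is the second claim in (1). To obtain that $\exp$ is a global diffeomorphism, I would realize $\mathcal{G}$ concretely: by Ado's theorem together with Engel's theorem one embeds $\mathcal{G}$ as a subalgebra of strictly upper-triangular $N\times N$ matrices. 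On such matrices $X$ one has $X^{N}=0$, so both
\[
\exp X = \sum_{k=0}^{N-1} \frac{X^{k}}{k!}, \qquad \log(I+Y) = \sum_{k=1}^{N-1} \frac{(-1)^{k+1}}{k}\,Y^{k}
\]
are polynomial maps that are mutually inverse between the nilpotent matrices and the group of unipotent upper-triangular matrices. Since the latter is connected and simply connected with Lie algebra $\mathcal{G}$, it must coincide with $\mathbb{G}$ by uniqueness of the simply-connected group, and the explicit polynomial inverse $\log$ exhibits $\exp$ as a diffeomorphism.

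For assertion (2), I would first record that $\mathbb{G}$ is unimodular. Because $\mathrm{ad}\,X$ is nilpotent for every $X\in\mathcal{G}$, the adjoint representation $\mathrm{Ad}(\exp X)=\mathe^{\mathrm{ad}\,X}$ is unipotent, whence $\det \mathrm{Ad}(g)=1$ for all $g\in\mathbb{G}$ and the modular function is identically $1$. Next I would compute the Jacobian of $\exp$: trivializing the tangent spaces by left translation, the differential $\mathd\exp_{X}$ is represented by the operator $\frac{I-\mathe^{-\mathrm{ad}\,X}}{\mathrm{ad}\,X}$, all of whose eigenvalues equal $1$ because $\mathrm{ad}\,X$ is nilpotent and $\frac{1-\mathe^{-t}}{t}\big|_{t=0}=1$; hence this Jacobian determinant is identically $1$. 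Consequently the pushforward $\exp_{*}\lambda$ of Lebesgue measure is a left-invariant (and, by unimodularity, bi-invariant) measure, i.e.\ a Haar measure, which is precisely the stated identification.

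The main obstacle is the global diffeomorphism assertion in (1): the fact that $\mathd\exp_{0}$ is the identity gives, via the inverse function theorem, only a local diffeomorphism near the origin, and promoting this to a global statement is exactly where nilpotency and simple connectivity enter. The matrix embedding circumvents this cleanly by exhibiting the explicit polynomial inverse $\log$; an alternative intrinsic route is induction on the step $m$, passing to the central quotient $\mathbb{G}/\exp(V_{m})$ and lifting injectivity and surjectivity through the resulting central extension, but that demands more delicate bookkeeping of the central variables.
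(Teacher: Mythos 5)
Your proposal is essentially correct, but note that the paper does not prove this proposition at all: it is quoted as standard background, with the proof deferred to the cited sources (Proposition~1.1.1 of the Ruzhansky--Suragan reference, or Proposition~1.2 of Folland--Stein). So the comparison is between your self-contained argument and the classical textbook proofs. Your route for (1) — Ado/Engel realization of $\mathcal{G}$ as strictly upper-triangular matrices, with the terminating series for $\exp$ and $\log$ giving mutually inverse polynomial maps — is a legitimate alternative to the more common intrinsic argument, in which one defines a group law directly on $\mathcal{G}$ by the finite Baker--Campbell--Hausdorff series, checks it makes $\mathcal{G}$ a simply connected Lie group with the given Lie algebra, and invokes uniqueness; the intrinsic route avoids Ado's theorem entirely, while yours buys an explicit polynomial inverse. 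Two points in your matrix argument deserve tightening: first, plain Ado plus Engel is not quite enough — you need the refinement (Birkhoff's embedding theorem) that a nilpotent Lie algebra admits a \emph{faithful} representation by nilpotent matrices, after which Engel gives the strictly upper-triangular form; second, the full unipotent upper-triangular group has Lie algebra the whole strictly upper-triangular algebra, not $\mathcal{G}$, so the group you should identify with $\mathbb{G}$ is the subgroup $\exp(\mathcal{G})$ (closed under multiplication by the finite BCH formula, diffeomorphic to a Euclidean space via the polynomial $\log$, hence connected and simply connected), to which the uniqueness theorem then applies. Your proof of (2) — unimodularity from $\det\mathrm{Ad}(g)=1$ since $\mathrm{Ad}(\exp X)=\mathe^{\operatorname{ad}X}$ is unipotent, and Jacobian of $\exp$ equal to $\det\bigl((I-\mathe^{-\operatorname{ad}X})/\operatorname{ad}X\bigr)=1$ by nilpotency of $\operatorname{ad}X$ — is exactly the standard argument in the cited literature and is correct as stated.
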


Thereafter, we use $Q$ to denote the homogeneous dimension of  $\mathbb{G}$, $y^{-1}$ represents the inverse of $y\in \mathbb{G}$,  $y^{-1}x$ stands for the group multiplication of $y^{-1}$  by $x$ and the group identity  element of $\mathbb{G}$ will be referred to as the origin denotes by $\theta$.

A homogeneous norm $\rho:~ x\to \rho(x)$  defined on $\mathbb{G}$ is a continuous function  from $\mathbb{G}$ to $[0,\infty)$, which is  $C^{\infty}$ on $\mathbb{G}\setminus\{\theta\}$ and satisfies
\begin{align*}
\begin{cases}
 \rho(x^{-1}) =  \rho(x), \\
 \rho(\delta_{t}x) =  t\rho(x) \ \ \text{for all}~  x \in \mathbb{G} ~\text{and}~ t > 0, \\
 \rho(\theta) =  0.
\end{cases}
\end{align*}

Set
\begin{align*}
  \rho(x,y) := \rho(x^{-1}y)=\rho(y^{-1}x), \ \  \ \  \forall~ x,y\in \mathbb{G}.
\end{align*}
It defines a quasi-distance in the sense of Coifman-Weiss,
namely, there exists a constant $c_{0} \ge 1$ such that the pseudo-triangle inequality
\begin{align*}
  \rho(x,y) \le c_{0}(\rho(x,z) + \rho(z,y)) 
\end{align*}
holds for every $x,y,z\in \mathbb{G}$.
In particular, the Carnot–Carath\'{e}odory distance $d_{cc}$ generated by a vector field $\{X_{j}\}_{1\le j\le n}$ \cite{bonfiglioli2007stratified}, which is not smooth on stratified Lie group $\mathbb{G}\setminus\{\theta\}$, is equivalent to  
the pseudo-metric $\rho(\cdot,\cdot)$, that is, there exist $C_{1}, C_{2}>0$ satisfying
\begin{align*}
  C_{1}\rho(x,y) \le   d_{cc}(x,y)  \le  C_{2} \rho(x,y)),   \ \  \ \  \forall~ x,y\in \mathbb{G}.
\end{align*}

 With the norm above, we define the $\mathbb{G}$ ball centered at $x$ with radius $r$ by $B(x, r) = \{y \in \mathbb{G} : \rho(y^{-1}x) < r\}$,  and by $\lambda B$ denote the ball $B(x,\lambda r)$  with $\lambda>0$, let $B_{r} = B(\theta, r) = \{y \in \mathbb{G}  : \rho(y) < r\}$ be the open ball centered at $\theta$ with radius $r$,  which is the image under $\delta_{r}$ of $B(\theta, 1)$.
 And by $\sideset{^{\complement}}{}  {\mathop {B(x,r)}} = \mathbb{G}\setminus B(x,r)= \{y \in \mathbb{G} : \rho(y^{-1}x) \ge r\}$ denote the complement of $B(x, r)$.  Let  $|B(x,r)|$ be the Haar measure of the ball  $B(x,r)\subset \mathbb{G}$, and
 there exists $c_{1} =c_{1} (\mathbb{G})$ such that
\begin{align*}
  |B(x,r)| = c_{1} r^{Q}, \ \  \ \   x\in \mathbb{G}, r>0.
\end{align*}
In addition,  the Haar measure of a homogeneous Lie group  $\mathbb{G}$  satisfies the doubling condition (see pages 140 and 501,\cite{fischer2016quantization}), i.e. $\forall ~ x\in \mathbb{G}$, $r>0$, $\exists~ C$, such that
\begin{align*}
   |B(x,2r)| \le C |B(x,r)|.
\end{align*}

The most basic partial differential operator in a stratified Lie group is the sub-Laplacian associated with $X=\{X_{1},\dots,X_{n}\}$, i.e., the second-order partial differential operator on
 $\mathbb{G}$  given by
\begin{align*}
 \mathfrak{L} =  \sum_{i=1}^{n} X_{i}^{2}.
\end{align*}


\subsection{Lipschitz spaces on $\mathbb{G}$}

Next we give the definition of the Lipschitz spaces on $\mathbb{G}$, and state some basic properties and useful lemmas.

\begin{definition}[Lipschitz-type spaces on $\mathbb{G}$]   \label{def.lip-space} \
\begin{enumerate}[ label=(\arabic*)]
\item   Let $0<\beta <1$, we say a function $b$ belongs to the Lipschitz space $\Lambda_{\beta}(\mathbb{G}) $ if there exists a constant $C>0$ such that for all  $x,y\in \mathbb{G}$,
\begin{align*}
  |b(x)-b(y)|   &\le C(\rho(y^{-1}x))^{\beta},
\end{align*}
where $\rho$ is the homogenous norm. The smallest such constant $C$ is called the $\Lambda_{\beta}$  norm of $b$ and is denoted by $\|b\|_{\Lambda_{\beta}(\mathbb{G})}$.
    \label{enumerate:def-lip-1}
\item (see \cite{macias1979lipschitz} ) Let $0<\beta <1$ and $1\le p<\infty$.   A locally integrable function $b$ is said to belong to  the space $\lip_{\beta,p}(\mathbb{G}) $ if
 there exists a positive constant $C $, such that
\begin{align*}
      \sup_{B\ni x} \dfrac{1}{ |B|^{\beta/Q}}\Big( \dfrac{1}{|B|}  \dint_{B} |b(x)- b_{B}|^{p}\mathd x \Big)^{1/p} \le C,
\end{align*}
where the supremum is taken over every ball $B\subset \mathbb{G}$ containing $x$ and $b_{B}=\frac{1}{|B|} \int_{B} b(x) \mathd x$. The least constant $C$   satisfying the conditions above shall   be denoted by $\|b\|_{\lip_{\beta,p}(\mathbb{G})}$.
    \label{enumerate:def-lip-2}
\end{enumerate}
\end{definition}

\begin{remark}  \label{rem.Lipschitz-def}
\begin{enumerate}[label=(\roman*)]
\item  In addition to the form of  \cref{def.lip-space} \labelcref{enumerate:def-lip-1}, 
    we also have the definition form as following  (see  \cite{krantz1982lipschitz,chen2010lipschitz,fan1995characterization} et al.)
\begin{align*}
 \|b\|_{\Lambda_{\beta}(\mathbb{G})}&= 
 \sup_{x,y\in \mathbb{G} \atop x\neq y} \dfrac{|b(x)-b(y)|}{(\rho(y^{-1}x))^{\beta}}<\infty.
\end{align*}
And $\|b\|_{\Lambda_{\beta}(\mathbb{G})} =0$   if and only if $b$ is constant.   
\item  In \labelcref{enumerate:def-lip-2},  when   $p=1$, we have
\begin{align*}
     \|b\|_{\lip_{\beta,1}(\mathbb{G})} =\sup_{B\ni x} \dfrac{1}{ |B|^{\beta/Q}}\Big( \dfrac{1}{|B|}  \dint_{B} |b(x)- b_{B}| \mathd x \Big) :=\|b\|_{\lip_{\beta}(\mathbb{G})}.
\end{align*}
\end{enumerate}
\end{remark}

Since stratified Lie groups can be  regarded as  a special case of spaces of homogeneous type in the sense of Coifman-Weiss, hence, the following results can be  derived from  \cite{macias1979lipschitz,chen2010lipschitz,li2003lipschitz}.
\begin{lemma} \label{lem:2.2-li2003lipschitz}   
Assume that  $0<\beta<1$ and $b$  is a locally integrable function on $\mathbb{G}$.
\begin{enumerate}[label=(\roman*)]
\item  When $1\le p<\infty$,  then
\begin{align*}
 \|b\|_{\Lambda_{\beta}(\mathbb{G})} &=  \|b\|_{\lip_{\beta}(\mathbb{G})} \approx  \|b\|_{\lip_{\beta,p}(\mathbb{G})}.
\end{align*}
\label{enumerate:property-lip-Lie-1}    
\item   Let balls $B_{1}\subset B_{2}\subset \mathbb{G}$ and $b\in \lip_{\beta,p}(\mathbb{G})$ with $p\in [1,\infty)$. Then there exists a constant $C$ depends on $B_{1}$ and $B_{2}$ only, such that
\begin{align*}
     |b_{B_{1}}- b_{B_{2}} |   &\le    C  \|b\|_{\lip_{\beta,p}(\mathbb{G})} |B_{2}|^{\beta/Q}.
\end{align*}
\label{enumerate:property-lip-Lie-2}  
\item   When $1\le p<\infty$, then there exists a constant $C$ depends on $\beta$ and $p$ only, such that
\begin{align*}
     | b(x)-  b(y) |   &\le   C  \|b\|_{\lip_{\beta,p}(\mathbb{G})} |B|^{\beta/Q}
\end{align*}
holds for any ball $B$ containing $x$ and $y$.
\label{enumerate:property-lip-Lie-3}
\end{enumerate}
\end{lemma}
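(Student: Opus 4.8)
The plan is to prove the three assertions in the order (ii), (i), (iii), since (ii) furnishes the key estimate driving the chaining argument behind (i), and (iii) is an immediate consequence of (i). Throughout I will freely use the exact relation $|B(x,r)| = c_{1}r^{Q}$, the doubling property, the pseudo-triangle inequality with constant $c_{0}$, the symmetry and homogeneity of $\rho$, and the identification $\lip_{\beta} = \lip_{\beta,1}$ recorded in \cref{rem.Lipschitz-def}; I read the displayed ``$=$'' between the $\Lambda_{\beta}$ and $\lip_{\beta}$ norms as an equivalence of norms (identification of spaces), which is what the arguments below deliver. This follows the standard route for spaces of homogeneous type.

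For assertion (ii) I would write $b_{B_{1}} - b_{B_{2}} = \frac{1}{|B_{1}|}\int_{B_{1}}(b(y) - b_{B_{2}})\,\mathd y$, pass to absolute values, and enlarge the domain of integration from $B_{1}$ to $B_{2}$ (licit since the integrand is nonnegative and $B_{1}\subset B_{2}$). This produces the factor $|B_{2}|/|B_{1}|$ times the mean oscillation $\frac{1}{|B_{2}|}\int_{B_{2}}|b - b_{B_{2}}|$, which is at most $\|b\|_{\lip_{\beta}}|B_{2}|^{\beta/Q} \le \|b\|_{\lip_{\beta,p}}|B_{2}|^{\beta/Q}$ after one application of Jensen's inequality; taking $C = |B_{2}|/|B_{1}|$, depending only on $B_{1},B_{2}$, gives the claim. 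The two easy links in (i) go the same way: $\|b\|_{\lip_{\beta,1}} \le \|b\|_{\lip_{\beta,p}}$ is Jensen's inequality on the normalised measure $\mathd y/|B|$, and $\|b\|_{\lip_{\beta,p}} \le C\|b\|_{\Lambda_{\beta}}$ comes from $|b(x) - b_{B}| \le \frac{1}{|B|}\int_{B}|b(x) - b(y)|\,\mathd y$, into which I insert $|b(x)-b(y)| \le \|b\|_{\Lambda_{\beta}}(\rho(y^{-1}x))^{\beta}$ and use $\rho(y^{-1}x) \le 2c_{0}r$ for $x,y$ in a ball of radius $r$, converting $r^{\beta}$ into $|B|^{\beta/Q}$ via $|B| = c_{1}r^{Q}$.

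The substantive part of (i) is the reverse bound $\|b\|_{\Lambda_{\beta}} \le C\|b\|_{\lip_{\beta}}$, obtained by a telescoping argument. Fix $x,y$, set $d = \rho(y^{-1}x)$, and choose an enclosing ball $B^{*}$ of radius comparable to $d$ containing both points. Running down the dyadic chain of concentric balls $B(x,2^{-k}r)$ shrinking to $x$ and applying (ii) to each adjacent pair (where the ratio of measures is the fixed constant $2^{Q}$, by $|B(x,r)|=c_{1}r^{Q}$), I sum the geometric series $\sum_{k}(2^{-k}d)^{\beta}$, convergent because $\beta>0$, to reach $|b(x) - b_{B^{*}}| \le C\|b\|_{\lip_{\beta}}d^{\beta}$; the same holds for $y$ once the two chains are connected through $B^{*}$ by one further application of (ii). The identification $b_{B(x,2^{-k}r)} \to b(x)$ rests on the Lebesgue differentiation theorem, valid at every Lebesgue point of this doubling space. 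The triangle inequality then yields $|b(x)-b(y)| \le C\|b\|_{\lip_{\beta}}(\rho(y^{-1}x))^{\beta}$, hence $\|b\|_{\Lambda_{\beta}} \le C\|b\|_{\lip_{\beta}}$, and the loop of equivalences closes.

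I expect the chaining step to be the main obstacle: one must track the constant $c_{0}$ to guarantee that the enclosing and shrinking balls are genuinely nested and that $B^{*}$ absorbs both $x$ and $y$, keep the constant from (ii) uniform across scales through the exact law $|B(x,r)| = c_{1}r^{Q}$, and upgrade the estimate—a priori valid only at Lebesgue points—to an everywhere bound by passing to the H\"older-continuous representative of $b$. Once (i) is established, assertion (iii) is immediate: for any ball $B \ni x,y$ of radius $R$ the pseudo-triangle inequality gives $\rho(y^{-1}x) \le 2c_{0}R$, whence $|b(x)-b(y)| \le C\|b\|_{\lip_{\beta,p}}(\rho(y^{-1}x))^{\beta} \le C'\|b\|_{\lip_{\beta,p}}R^{\beta} \approx \|b\|_{\lip_{\beta,p}}|B|^{\beta/Q}$.
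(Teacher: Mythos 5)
Your proposal is correct, and it is essentially the standard argument: the paper itself offers no proof of this lemma, but simply observes that $\mathbb{G}$ is a space of homogeneous type and refers to \cite{macias1979lipschitz,chen2010lipschitz,li2003lipschitz}; the chaining/telescoping proof you outline (establish (ii) by enlarging the domain of integration and Jensen, deduce the nontrivial inequality $\|b\|_{\Lambda_{\beta}}\le C\|b\|_{\lip_{\beta}}$ by summing the geometric series over concentric dyadic balls and invoking the Lebesgue differentiation theorem, then obtain (iii) from (i) via the quasi-triangle inequality and $|B|=c_{1}r^{Q}$) is precisely the argument underlying those references. Two small points you already flag correctly and that are worth keeping explicit: the displayed ``$=$'' between $\|b\|_{\Lambda_{\beta}(\mathbb{G})}$ and $\|b\|_{\lip_{\beta}(\mathbb{G})}$ should be read as an equivalence (the chaining argument only yields comparability with structural constants depending on $c_{0}$, $c_{1}$, $Q$, $\beta$), and the implication from the integral condition to the pointwise Lipschitz bound holds only after modifying $b$ on a null set, i.e.\ for the H\"older-continuous representative, since the pointwise estimate is first obtained at Lebesgue points; with those caveats your argument is complete.
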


\subsection{Variable exponent function spaces  on $\mathbb{G}$}

We now introduce the notion of  variable exponent Lebesgue spaces on stratified
groups and give some properties needed in
the sequel (see \cite{liu2022characterisation} or \cite{liu2019multilinear} for more details). 

We say that a measurable function $p(\cdot)$ is a variable exponent if $p(\cdot): \mathbb{G}\to (0,\infty)$.

\begin{definition} \label{def.variable-exponent-Lie}
  Given a measurable function $q(\cdot)$ defined on  a measurable subset $E\subset\mathbb{G}$. Set
$$
p_{-}(E) :=\essinf_{x\in E} p(x),\ \
p_{+}(E) := \esssup_{x\in E} p(x).
$$
For conciseness, we abbreviate $p_{-}(\mathbb{G})$ and $p_{+}(\mathbb{G})$  to $p_{-}$ and $p_{+}$.
\begin{enumerate}[label=(\arabic*),itemindent=1em] 
\item $p'_{-}=\essinf\limits_{x\in \mathbb{G}} p'(x)=\frac{p_{+}}{p_{+}-1},\ \ p'_{+}= \esssup\limits_{x\in \mathbb{G}} p'(x)=\frac{p_{-}}{p_{-}-1}.$
\item  Denote by $\mathscr{P}_{1}(\mathbb{G})$ the set of all measurable functions $ p(\cdot): \mathbb{G}\to[1,\infty)$ such that
$$1\le p_{-}\le p(x) \le p_{+}<\infty,\ \ x\in \mathbb{G}.$$
  \item Denote by $\mathscr{P}(\mathbb{G})$ the set of all measurable functions $ q(\cdot): \mathbb{G}\to(1,\infty)$ such that
$$1< p_{-}\le p(x) \le p_{+}<\infty,\ \ x\in \mathbb{G}.$$
 \item  The set $\mathscr{B}(\mathbb{G})$ consists of all  measurable functions  $p(\cdot)\in\mathscr{P}(\mathbb{G})$ satisfying that the Hardy-Littlewood maximal operator $\mathcal{M}$ is bounded on $L^{p(\cdot)}(\mathbb{G})$.
\end{enumerate}
\end{definition}


\begin{definition}[Variable exponent Lebesgue spaces on $\mathbb{G}$] \label{def.variable-lebesgue-space-Lie}
 Let   $p(\cdot) \in \mathscr{P}(\mathbb{G})$ be a measurable function.
 The   variable exponent Lebesgue space $L^{p(\cdot)}(\mathbb{G})$  is defined  as follows
\begin{align*}
  L^{p(\cdot)}(\mathbb{G})= \Big \{f~ \text{is measurable function}: \int_{\mathbb{G}} \Big( \frac{|f(x)|}{\eta} \Big)^{p(x)} \mathrm{d}x<\infty ~\text{for some constant}~ \eta>0  \Big \}.
\end{align*}
 It is known that the set   $L^{p(\cdot)}(\mathbb{G})$ becomes a Banach  space with respect to the Luxemburg norm
 \begin{equation*}
  \|f\|_{ p(\cdot)}= \|f\|_{L^{p(\cdot)}(\mathbb{G})}=\inf \Big\{ \eta>0: \int_{\mathbb{G}} \Big( \frac{|f(x)|}{\eta} \Big)^{p(x)} \mathrm{d}x \le 1 \Big\}.
\end{equation*}
\end{definition}

\begin{definition}[$\log$-H\"{o}lder continuity] \label{def.log-holder-liu2022characterisation} 
 Let $\Omega\subset\mathbb{G}$ and $q(\cdot) \in \mathscr{P}(\Omega)$ be a measurable function.
\begin{enumerate}[label=(\arabic*),itemindent=1em] 
\item Denote by  $\mathscr{C}_{0}^{\log}(\Omega)$ the set of all local  $\log$-H\"{o}lder continuous  functions     $q(\cdot)$ which satisfies
 \begin{equation}     \label{inequ:2.2-liu2022characterisation}
 |q(x)-q(y)| \le \frac{C}{\log (e+1/\rho(x,y))}
\end{equation}
for all $x,y\in \Omega$, where $C$ denotes a 
positive constant.
 \item  The set $\mathscr{C}_{\infty}^{\log}(\Omega)$ consists of all $ q(\cdot)$ which  satisfies $\log$-H\"{o}lder decay condition with basepoint  $\theta\in \mathbb{G}$, if there exists $q_{\infty}\in \mathbb{R}$  such that

\begin{equation} \label{inequ:2.3-liu2022characterisation}
 |q(x)-q_{\infty}| \le \frac{C}{\log (e+ \rho(\theta,x))}
\end{equation}
 for any $x \in\Omega$, where $C$ denotes a  positive constant. 
 \item Denote by $\mathscr{C}^{\log}(\Omega) =\mathscr{C}_{0}^{\log}(\Omega) \bigcap \mathscr{C}_{\infty}^{\log}(\Omega)$ the set consists of all global log-H\"{o}lder continuous functions $q(\cdot)$ in $\Omega$.
\end{enumerate}
\end{definition}

The first two  parts of \cref{rem.log-holder-liu2022characterisation}  can be found in \cite{liu2022characterisation}.
\begin{remark}  \label{rem.log-holder-liu2022characterisation}
\begin{enumerate}[ label=(\roman*)]  
\item    If $q(\cdot)$ is global log-H\"{o}lder continuous in $\mathbb{G}$, employing  \labelcref{inequ:2.2-liu2022characterisation}, then we have
\begin{align}  \label{inequ:2.4-liu2022characterisation}
 |q(x)-q(y)| \le -\dfrac{C}{\log (\rho(x,y))} \qquad \text{for}\  \rho(x,y)\le \frac{1}{2},
\end{align}
and \labelcref{inequ:2.3-liu2022characterisation} is equivalent to
\begin{align}  \label{inequ:2.5-liu2022characterisation}
 |q(x)-q(y)| \le  \frac{C}{\log (e+ \rho(\theta,x))} \qquad \text{for}\  \rho(\theta,y)\ge \rho(\theta,x).
\end{align}
\item  Note that $q_{\infty}=\lim\limits_{x\to\infty} q(x)$ exists in view of \labelcref{inequ:2.5-liu2022characterisation}.
\item Especially, we say  $q(\cdot)\in \mathscr{C}^{\log}(\mathbb{G}) $ with
basepoint  $\theta\in \mathbb{G}$ if both conditions \labelcref{inequ:2.4-liu2022characterisation,inequ:2.5-liu2022characterisation}
are satisfied.
\item In fact, when $q(\cdot)\in\mathscr{P}(\mathbb{G})$, if $q(\cdot)\in \mathscr{C}^{\log}(\mathbb{G}) $, then so do  $q'(\cdot)$ and $1/q(\cdot)$ since
\begin{align*}
  |q'(x)-q'(y)| &\le \frac{|q(x)-q(y)|}{(q_{-}-1)^{2}}
\\ \intertext{and}
 \Big| \frac{1}{q(x)}-\frac{1}{q(y)}\Big| &\le \frac{|q(x)-q(y)|}{(q_{-})^{2}}.
\end{align*}
\end{enumerate}
\end{remark}

\subsection{Some auxiliary lemmas}

The first part of \cref{lem.cor-2.4-variable-max-bounded} may be obtained from  \cite[Corollary 2.4]{liu2019multilinear} (or \cite{adamowicz2015maximal} and \cite{cruz2018boundedness}).
  By standard arguments and  elementary inferences, the second of \cref{lem.cor-2.4-variable-max-bounded} can  be derived  as well (see \cref{rem.log-holder-liu2022characterisation} or refer to Diening \cite[Theorem 8.1]{diening2005maximalf}   and   Cruz-Uribe et al. \cite[Theorem 1.2]{cruz2006theboundedness}).

\begin{lemma}  \label{lem.cor-2.4-variable-max-bounded}
Let $p(\cdot)\in \mathscr{P}(\mathbb{G} )$.
\begin{enumerate}[label=(\arabic*)] 
\item  If $p(\cdot)\in \mathscr{C}^{\log}(\mathbb{G})$,  then  $ p(\cdot)\in \mathscr{B}(\mathbb{G})$.
 \item  The  following conditions are equivalent: 
  \begin{enumerate}[label=(\roman*),itemindent=1em,align=left]
  \item  $ p(\cdot)\in \mathscr{B}(\mathbb{G})$,
 \item   $p'(\cdot)\in \mathscr{B}(\mathbb{G})$,
  \item    $ p(\cdot)/p_{0}\in \mathscr{B}(\mathbb{G})$ for some $1<p_{0}<p_{-}$,
 \item    $ (p(\cdot)/p_{0})'\in \mathscr{B}(\mathbb{G})$ for some $1<p_{0}<p_{-}$,
\end{enumerate}
where $r'$ stands for the conjugate exponent of $r$, viz., $1=\frac{1}{r(\cdot)} + \frac{1}{r'(\cdot)}$.
\end{enumerate}
\end{lemma}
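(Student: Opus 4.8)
The plan is to reduce the assertion to the now-standard boundedness theorem for the Hardy--Littlewood maximal operator on variable Lebesgue spaces over spaces of homogeneous type. First I would record that $(\mathbb{G},\rho,\mathd x)$ is a space of homogeneous type: the homogeneous norm $\rho$ induces a quasi-distance obeying the pseudo-triangle inequality with constant $c_{0}$, and the Haar measure is doubling, $|B(x,2r)|\le C|B(x,r)|$. Since $p(\cdot)\in\mathscr{P}(\mathbb{G})$ forces $1<p_{-}\le p_{+}<\infty$, while $p(\cdot)\in\mathscr{C}^{\log}(\mathbb{G})$ supplies both the local log-H\"older bound \labelcref{inequ:2.4-liu2022characterisation} and the decay condition \labelcref{inequ:2.5-liu2022characterisation}, all hypotheses of the Diening--Cruz-Uribe-Fiorenza-Neugebauer type theorem in this setting are met (cf. \cite[Corollary 2.4]{liu2019multilinear}, \cite{adamowicz2015maximal,cruz2018boundedness}), whence $\mathcal{M}$ is bounded on $L^{p(\cdot)}(\mathbb{G})$ and $p(\cdot)\in\mathscr{B}(\mathbb{G})$. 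The analytic core of that theorem rests on two key estimates: the local condition \labelcref{inequ:2.4-liu2022characterisation} yields $|B|^{p_{-}(B)-p_{+}(B)}\le C$ for balls of radius at most one, which lets one freeze the exponent on small balls, while the decay condition \labelcref{inequ:2.5-liu2022characterisation} controls the exponent near infinity by comparison with the limit value $p_{\infty}$; these combine with the boundedness of $\mathcal{M}$ on the constant-exponent space $L^{p_{\infty}}$.

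\textbf{Part (2), the elementary direction.} Here I would first dispose of the one genuinely elementary implication, namely (iii)$\Rightarrow$(i). Fix $1<p_{0}<p_{-}$ and set $q(\cdot)=p(\cdot)/p_{0}$; then $q(\cdot)\in\mathscr{P}(\mathbb{G})$ since $q_{-}=p_{-}/p_{0}>1$. Because $p_{0}\ge 1$, Jensen's inequality applied to the averages defining the maximal operator gives the pointwise bound $\mathcal{M}f(x)\le\big(\mathcal{M}(|f|^{p_{0}})(x)\big)^{1/p_{0}}$, and the homogeneity of the Luxemburg norm yields $\big\||g|^{1/p_{0}}\big\|_{L^{p(\cdot)}}=\|g\|_{L^{q(\cdot)}}^{1/p_{0}}$. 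Combining these with the assumed boundedness of $\mathcal{M}$ on $L^{q(\cdot)}(\mathbb{G})$ gives $\|\mathcal{M}f\|_{L^{p(\cdot)}}\le\|\mathcal{M}(|f|^{p_{0}})\|_{L^{q(\cdot)}}^{1/p_{0}}\le C\,\||f|^{p_{0}}\|_{L^{q(\cdot)}}^{1/p_{0}}=C\,\|f\|_{L^{p(\cdot)}}$, so $p(\cdot)\in\mathscr{B}(\mathbb{G})$.

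\textbf{Part (2), the deep equivalences.} The remaining implications rely on the two structural theorems of Diening. The duality theorem, $p(\cdot)\in\mathscr{B}(\mathbb{G})\iff p'(\cdot)\in\mathscr{B}(\mathbb{G})$ (\cite[Theorem 8.1]{diening2005maximalf}, \cite[Theorem 1.2]{cruz2006theboundedness}), gives (i)$\iff$(ii); applying the same duality to the exponent $q(\cdot)=p(\cdot)/p_{0}$, whose conjugate is $(p(\cdot)/p_{0})'$, gives (iii)$\iff$(iv). Finally, the left-openness (self-improvement) of $\mathscr{B}(\mathbb{G})$ asserts that $p(\cdot)\in\mathscr{B}(\mathbb{G})$ implies $p(\cdot)/p_{0}\in\mathscr{B}(\mathbb{G})$ for some $p_{0}>1$ sufficiently close to $1$, which may be taken with $p_{0}<p_{-}$; this is (i)$\Rightarrow$(iii). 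Together with the elementary (iii)$\Rightarrow$(i) above it closes the cycle (i)$\iff$(iii), and combined with the two duality equivalences shows all four conditions coincide. Each ingredient transfers from the Euclidean statements because it depends only on features of a doubling quasi-metric measure space: the doubling property, the Vitali-type covering lemmas, and the associate-space duality $(L^{p(\cdot)})'\cong L^{p'(\cdot)}$, all of which hold on $\mathbb{G}$.

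\textbf{Main obstacle.} The organizational skeleton---the elementary direction and the duality bookkeeping---is routine; the real weight sits in the two cited Diening-type results, the duality theorem and the self-improvement of $\mathscr{B}(\mathbb{G})$, together with the maximal theorem invoked in Part (1). The main task in the present setting is therefore not to reprove these but to confirm that their hypotheses reduce entirely to those of a doubling quasi-metric measure space, which $\mathbb{G}$ satisfies by \cref{pro:2.1-yessirkegenov2019} and the doubling bound recorded above, so that the stratified-group versions follow from the references.
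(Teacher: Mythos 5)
Your proposal is correct and follows essentially the same route as the paper, which does not prove this lemma in detail but derives part (1) from the variable-exponent maximal theorem on stratified groups (\cite[Corollary 2.4]{liu2019multilinear}, \cite{adamowicz2015maximal,cruz2018boundedness}) and part (2) from Diening's duality theorem and the left-openness/self-improvement results (\cite[Theorem 8.1]{diening2005maximalf}, \cite[Theorem 1.2]{cruz2006theboundedness}), noting only that these transfer to the doubling quasi-metric setting. Your added details (the Jensen/power-trick proof of (iii)$\Rightarrow$(i) and the norm identity $\||g|^{s}\|_{p(\cdot)}=\|g\|_{sp(\cdot)}^{s}$) are accurate and consistent with the paper's own auxiliary lemmas.
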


\begin{remark}   \label{rem.variable-max-bounded}
If  $ p(\cdot)\in \mathscr{B}(\mathbb{G})$ and $\lambda>1$, then $ \lambda p(\cdot)\in \mathscr{B}(\mathbb{G})$ is obvious (the idea is taken from \cite{cruz2006theboundedness} and for the Euclidean case  to see  Remark 2.13 in \cite{cruz2006theboundedness}).
\end{remark}

For convenience, we introduce a notation $\mathscr{B}^{\gamma}(\mathbb{G})$ as follows (similar to Definition 1.5 in \cite{zhang2019some}).

\begin{definition} \label{def.variable-exponent-pair-Lie}
 Let $0<\gamma<Q$. We say that an ordered pair of variable exponents $(p(\cdot),q(\cdot)) \in \mathscr{B}^{\gamma}(\mathbb{G})$, if $p(\cdot) \in \mathscr{P}(\mathbb{G})$ with $p_{+}<Q/\gamma$ and $1/q(\cdot) = 1/p(\cdot)-\gamma/Q$ with $q(\cdot)(Q-\gamma)/Q \in \mathscr{B}(\mathbb{G})$.
\end{definition}

\begin{remark}   \label{rem.variable-pair-Lie}
\begin{enumerate}[label=(\roman*) ]
\item  From \cref{lem.cor-2.4-variable-max-bounded} (1),  it is easy to see that the condition $p(\cdot)=q(\cdot)(Q-\gamma)/Q \in \mathscr{B}(\mathbb{G})$ can be replaced by $p(\cdot)\in \mathscr{C}^{\log}(\mathbb{G})$ in \cref{def.variable-exponent-pair-Lie}.
\item  Moreover,   the condition $q(\cdot)(Q-\gamma)/Q \in \mathscr{B}(\mathbb{G})$ is equivalent to saying that there exists a $q_{0}$ with $1<Q/(Q-\gamma)<q_{0}<\infty$ such that $ q(\cdot)/q_{0}\in \mathscr{B}(\mathbb{G})$ since (similar case of Euclidean space see \cite{cruz2006theboundedness} for  further information)
\begin{align*}
  \frac{q(\cdot)}{Q/(Q-\gamma)} =    \frac{q(\cdot)}{q_{0}}  \frac{q_{0}}{Q/(Q-\gamma)}  \ \text{and}\  \frac{q_{0}}{Q/(Q-\gamma)}>1.
\end{align*}
\item  Especially, $p(\cdot)=q(\cdot)(Q-\gamma)/Q \in \mathscr{B}(\mathbb{G})$  implies $q(\cdot) \in \mathscr{B}(\mathbb{G})$ in \cref{def.variable-exponent-pair-Lie}.
\end{enumerate}

\end{remark}

The part \labelcref{enumerate:holder-Lie} in following lemma is known as the  H\"{o}lder's inequality on Lebesgue spaces over Lie groups $\mathbb{G}$, it can also be deduced  from \cite{rao1991theory} or \cite{guliyev2022some}, when  Young function $\Phi(t)=t^{p}$ and its complementary function $\Psi(t)=t^{q}$ with $\frac{1}{p}+\frac{1}{q}=1$.  And   the part \labelcref{enumerate:variable-holder-Lie} can be  founded in \cite[Lemma 2.1]{liu2019multilinear}
\begin{lemma}[Generalized H\"{o}lder's inequality on  $\mathbb{G}$] \label{lem:holder-inequality-Lie-group}
Let   $\mathbb{G}$ be a measurable set.
\begin{enumerate}[label=(\roman*)]
\item Suppose that  $1\le p,q \le\infty$ with $\frac{1}{p}+\frac{1}{q}=1$,   and measurable functions $f\in L^{p}(\Omega)$ and $g\in L^{q}(\mathbb{G})$.  Then there exists a positive constant $C$ such that
\begin{align*}
   \dint_{\mathbb{G}} |f(x)g(x)|  \mathrm{d}x \le C \|f\|_{L^{p}(\mathbb{G})} \|g\|_{L^{q}(\mathbb{G})}.
\end{align*}
    \label{enumerate:holder-Lie}
\item  Assume that    $ p(\cdot), q(\cdot) \ge 1$ on $\mathbb{G}$ and   $r(\cdot) $ defined by
\begin{align*}
   \dfrac{1}{r(x) }=\dfrac{1}{p(x)}+ \dfrac{1}{q(x)}  \qquad  \text{for} \ \almostevery~  x \in \mathbb{G}.
\end{align*}
 Then there exists a positive constant $C$ such that the inequality
\begin{align*}
 \|fg\|_{r(\cdot)}\le C \|f \|_{p(\cdot)}  \|g \|_{q(\cdot)}
\end{align*}
holds for all  $f \in L^{p(\cdot)}(\mathbb{G})$ and  $g \in L^{q(\cdot)}(\mathbb{G})$.
    \label{enumerate:variable-holder-Lie}
\item When $r(\cdot)= 1$ in  \labelcref{enumerate:variable-holder-Lie}  as mentioned above, we have $ p(\cdot), q(\cdot)  \ge 1$ and   $\frac{1}{p(\cdot)}+ \frac{1}{q(\cdot)}=1$ almost everywhere.
Then there exists a positive constant $C$ such that the inequality
\begin{align*}
    \dint_{\mathbb{G}} |f(x)g(x)|  \mathrm{d}x  \le C \|f \|_{p(\cdot)}  \|g \|_{q(\cdot)}
\end{align*}
holds for all   $f \in L^{p(\cdot)}(\mathbb{G})$ and  $g \in L^{q(\cdot)}(\mathbb{G})$.
    \label{enumerate:variable-holder-Lie-1}
\end{enumerate}
\end{lemma}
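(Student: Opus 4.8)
The plan is to recognize that the three parts form a descending chain of specializations, so that essentially all the content lies in the variable-exponent estimate \labelcref{enumerate:variable-holder-Lie}. Indeed, part \labelcref{enumerate:variable-holder-Lie-1} is the case $r(\cdot)\equiv 1$ of \labelcref{enumerate:variable-holder-Lie}, and part \labelcref{enumerate:holder-Lie} is in turn the constant-exponent instance of \labelcref{enumerate:variable-holder-Lie-1} (equivalently, the classical Hölder inequality, available from the Orlicz duality of \cite{rao1991theory} with $\Phi(t)=t^{p}$). Since the Haar measure $\mathd x$ makes $(\mathbb{G},\mathd x)$ an ordinary $\sigma$-finite measure space, no feature of the group beyond its being a measure space enters; the argument is purely the pointwise algebra of exponents together with the modular--norm relationship for $L^{r(\cdot)}(\mathbb{G})$.

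To prove \labelcref{enumerate:variable-holder-Lie} I would first normalize. By homogeneity of the Luxemburg norm it is enough to show $\|fg\|_{r(\cdot)}\le C$ assuming $\|f\|_{p(\cdot)}=\|g\|_{q(\cdot)}=1$, in which case the unit-ball property of the modular gives $\dint_{\mathbb{G}}|f(x)|^{p(x)}\,\mathd x\le 1$ and $\dint_{\mathbb{G}}|g(x)|^{q(x)}\,\mathd x\le 1$. The pointwise heart of the matter is Young's inequality $ab\le a^{s}/s+b^{t}/t$ applied with the variable conjugate pair $s(x)=p(x)/r(x)$ and $t(x)=q(x)/r(x)$, which are conjugate a.e. precisely because $1/s(x)+1/t(x)=r(x)\big(1/p(x)+1/q(x)\big)=1$. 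Taking $a=|f(x)|^{r(x)}$ and $b=|g(x)|^{r(x)}$ and using $r(x)\le p(x)$, $r(x)\le q(x)$ yields
\begin{align*}
 |f(x)g(x)|^{r(x)} \le \frac{r(x)}{p(x)}|f(x)|^{p(x)}+\frac{r(x)}{q(x)}|g(x)|^{q(x)} \le |f(x)|^{p(x)}+|g(x)|^{q(x)}.
\end{align*}
Integrating over $\mathbb{G}$ and inserting the normalization bounds gives the modular estimate $\dint_{\mathbb{G}}|f(x)g(x)|^{r(x)}\,\mathd x\le 2$.

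The remaining, and only genuinely delicate, step is to convert this modular bound into a norm bound. When $r_{-}\ge 1$ this is immediate, since $\dint_{\mathbb{G}}(|fg|/2)^{r(x)}\,\mathd x\le 2^{-r_{-}}\cdot 2=2^{1-r_{-}}\le 1$, whence $\|fg\|_{r(\cdot)}\le 2$ by definition of the Luxemburg norm; undoing the normalization gives $\|fg\|_{r(\cdot)}\le 2\|f\|_{p(\cdot)}\|g\|_{q(\cdot)}$. In the general case I would instead invoke the two-sided comparison $\min\big(\lambda^{1/r_{-}},\lambda^{1/r_{+}}\big)\le\|h\|_{r(\cdot)}\le\max\big(\lambda^{1/r_{-}},\lambda^{1/r_{+}}\big)$, valid whenever $\dint_{\mathbb{G}}|h|^{r(x)}\,\mathd x=\lambda$, which upgrades $\lambda\le 2$ to $\|fg\|_{r(\cdot)}\le\max\big(2^{1/r_{-}},2^{1/r_{+}}\big)$ and produces the final constant $C$, depending only on $r_{-},r_{+}$ and hence on $p_{\pm},q_{\pm}$.

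With \labelcref{enumerate:variable-holder-Lie} in hand, part \labelcref{enumerate:variable-holder-Lie-1} follows by putting $r(\cdot)\equiv 1$, for then $\|fg\|_{r(\cdot)}=\dint_{\mathbb{G}}|f(x)g(x)|\,\mathd x$ and the hypotheses $p(\cdot),q(\cdot)\ge 1$, $1/p(\cdot)+1/q(\cdot)=1$ are exactly those required; part \labelcref{enumerate:holder-Lie} is the constant-exponent reduction of this (with the endpoints $p=1,q=\infty$ handled by the trivial $\dint_{\mathbb{G}}|fg|\le\|f\|_{1}\|g\|_{\infty}$). The obstacle I expect to flag explicitly is the modular-to-norm passage when $r_{-}<1$, where $L^{r(\cdot)}(\mathbb{G})$ is only a quasi-normed space: this is precisely what forces the two-sided modular comparison rather than the clean $r_{-}\ge 1$ estimate, though it leaves the structure of the proof untouched.
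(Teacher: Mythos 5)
Your proof is correct, and it is worth noting at the outset that the paper does not actually prove this lemma at all: part \labelcref{enumerate:holder-Lie} is attributed to classical H\"older/Orlicz duality with $\Phi(t)=t^{p}$ via \cite{rao1991theory} and \cite{guliyev2022some}, part \labelcref{enumerate:variable-holder-Lie} is quoted from \cite[Lemma 2.1]{liu2019multilinear}, and part \labelcref{enumerate:variable-holder-Lie-1} is obtained as the case $r(\cdot)\equiv 1$ --- so your self-contained argument replaces citations rather than an in-paper proof. What you supply is the standard modular argument (essentially \cite[Corollary 2.28]{cruz2013variable} transplanted to $(\mathbb{G},\mathd x)$), and your structural observations are exactly right: only the $\sigma$-finite measure-space structure of the Haar measure enters, all content sits in part \labelcref{enumerate:variable-holder-Lie}, and the other two parts are specializations. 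Two small points would tighten it. First, Young's inequality with the conjugate pair $s(x)=p(x)/r(x)$, $t(x)=q(x)/r(x)$ needs $1<s(x),t(x)<\infty$, i.e.\ $p(x),q(x)<\infty$ a.e.; this is harmless in the paper's setting, where exponents in $\mathscr{P}(\mathbb{G})$ satisfy $p_{+}<\infty$, but since the lemma as stated allows $p=\infty$ in part \labelcref{enumerate:holder-Lie} and says only $p(\cdot),q(\cdot)\ge 1$ in part \labelcref{enumerate:variable-holder-Lie}, you should either declare the standing assumption $p_{+},q_{+}<\infty$ or handle the set where an exponent is infinite by the direct bound $|fg|\le \|g\|_{\infty}|f|$ there. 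Second, your flagged worry about $r_{-}<1$ can be closed quantitatively rather than left open: since $1/r(x)=1/p(x)+1/q(x)\le 2$, one always has $r(x)\ge 1/2$, so $r_{-}\ge 1/2>0$, the Luxemburg functional is a quasi-norm for which your two-sided modular--norm comparison is valid (given $r_{+}<\infty$), and the resulting constant is bounded by $\max\bigl(2^{1/r_{-}},2^{1/r_{+}}\bigr)\le 4$, an absolute constant consistent with the unspecified $C$ in the statement.
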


The following properties for the characteristic function   are  required as well.
By elementary calculations, the   part   \labelcref{enumerate:charact-norm-Lie} can be obtained from the properties of Lie group (or referring to  \cite{guliyev2022some} when  Young function $\Phi(t)=t^{p}$).
The both \labelcref{enumerate:charact-norm-Lie-variable} and \labelcref{enumerate:charact-norm-Lie-variable-1} can be found in \cite{liu2022characterisation} (see Lemma 3.2 and Lemma 3.3).
In particular, by standard arguments, the part   \labelcref{enumerate:charact-norm-fraction-Lie-variable} may be deduced from \cref{lem:holder-inequality-Lie-group} \labelcref{enumerate:variable-holder-Lie}.
  Moreover, according to \cref{lem.cor-2.4-variable-max-bounded} and \cref{lem:norm-characteristic-Lie-liu2022characterisation}  \labelcref{enumerate:charact-norm-Lie-variable-1}, the   part \labelcref{enumerate:charact-norm-Lie-variable-dual}  can also be inferred by simple calculations. So, we omit the proofs.
\begin{lemma}[Norms of characteristic functions] \label{lem:norm-characteristic-Lie-liu2022characterisation} 
\begin{enumerate}[ label=(\arabic*)]  
\item  Let $0<p<\infty$ and $\Omega\subset  \mathbb{G}$ be a measurable set with finite Haar measure. Then
\begin{align*}
 \|\dchi_{\Omega}\|_{L^{p}(\mathbb{G})} = \|\dchi_{\Omega}\|_{WL^{p}(\mathbb{G})}  = |\Omega|^{1/p}.
\end{align*}
    \label{enumerate:charact-norm-Lie}
\item Let $p(\cdot) \in \mathscr{P}(\mathbb{G})$. Then the followings  statements  are equivalent:
\begin{enumerate}[ label=(\roman*)]  
\item    $ p(\cdot)\in \mathscr{C}_{0}^{\log}(\mathbb{G})$.
\item  For a given ball $B\subset \mathbb{G}$ and  all $x \in B$, there exists a positive constant C such that
\begin{align*}
  |B|^{p(x)-p_{+}(B)}  &\le C.
\end{align*}
\item  For a given ball $B\subset \mathbb{G}$ and  all $x \in B$, there exists a positive constant C such that
\begin{align*}
  |B|^{p_{-}(B)-p(x)}  &\le C.
\end{align*}
\end{enumerate}
    \label{enumerate:charact-norm-Lie-variable}
\item Suppose  $ p(\cdot)\in \mathscr{P}(\mathbb{G})\bigcap\mathscr{C}^{\log}(\mathbb{G})$.  For a given ball  $B=B(x,r)\subset \mathbb{G}$  and  all $x \in B$.
\begin{enumerate}[ label=(\roman*)]  
\item  If   $|B|\le 1$,  then
\begin{align*}
  \|\dchi_{B}\|_{p(\cdot)}\sim |B|^{\frac{1}{p(x)}} \sim |B|^{\frac{1}{p_{-}(B)}} \sim |B|^{\frac{1}{p_{+}(B)}}.
\end{align*}
\item  If   $|B|\ge 1$,  then
\begin{align*}
  \|\dchi_{B}\|_{p(\cdot)}\sim   |B|^{\frac{1}{p_{\infty} }}.
\end{align*}
\end{enumerate}
    \label{enumerate:charact-norm-Lie-variable-1}
\item Let $0 <\alpha<n$. If  $ p(\cdot) $, $q(\cdot)\in \mathscr{P}(\mathbb{G})$  with   $p_{+}<\frac{Q}{\alpha}$ and $1/q(\cdot) = 1/p(\cdot) - \alpha/Q$, then  there exists a  constant $C>0$ such that
\begin{align*}
  \|\dchi_{B}  \|_{L^{p(\cdot)}(\mathbb{G})}\le C  |B|^{\beta/Q} \|\dchi_{B}  \|_{L^{q(\cdot)}(\mathbb{G})}
\end{align*}
holds for all    balls $B \subset \mathbb{G}$.
    \label{enumerate:charact-norm-fraction-Lie-variable}
\item Let $ p(\cdot)\in \mathscr{B}(\mathbb{G})$, then there exists a constant $C>0$ such that
\begin{align*}
  \dfrac{1}{|B|}  \|\dchi_{B}\|_{L^{p(\cdot)}(\mathbb{G}) }   \|\dchi_{B}\|_{L^{p'(\cdot)}(\mathbb{G}) }    \le C
\end{align*}
holds for all  balls $B\subset \mathbb{G}$.
 \label{enumerate:charact-norm-Lie-variable-dual}
\end{enumerate}
\end{lemma}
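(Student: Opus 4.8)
The five assertions are largely independent, and the plan is to establish them in order of increasing depth, all resting on the single modular identity that for a $\mathbb{G}$-ball $B=B(x_{0},r)$ with $|B|=c_{1}r^{Q}$ the modular of $\dchi_{B}/\eta$ equals $\int_{B}\eta^{-p(y)}\mathd y$, so that $\|\dchi_{B}\|_{p(\cdot)}$ is the value of $\eta$ making this integral equal to $1$. For \labelcref{enumerate:charact-norm-Lie} with constant exponent $p$ this reads $|\Omega|\,\eta^{-p}=1$, giving $\eta=|\Omega|^{1/p}$ at once; the weak-norm identity follows from the same distribution-function computation, since $\dchi_{\Omega}$ is already its own level set. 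The nontrivial content of \labelcref{enumerate:charact-norm-Lie-variable} is that local $\log$-H\"older continuity is precisely the statement that $p$ oscillates by at most $C/\log(e+1/r)$ on a ball of radius $r$: assuming $p(\cdot)\in\mathscr{C}_{0}^{\log}(\mathbb{G})$, for $x,y\in B$ the pseudo-triangle inequality gives $\rho(x,y)\le 2c_{0}r$, so \labelcref{inequ:2.2-liu2022characterisation} yields $p_{+}(B)-p_{-}(B)\le C/\log(e+1/r)$. When $|B|\ge 1$ the exponent $p(x)-p_{+}(B)$ is nonpositive and (ii) is immediate; when $|B|<1$ I would write $|B|^{p(x)-p_{+}(B)}=\exp\!\big((p_{+}(B)-p(x))\log(1/|B|)\big)$ and play $\log(1/|B|)\sim Q\log(1/r)$ against $p_{+}(B)-p(x)\le C/\log(1/r)$ to bound the exponent by $CQ$. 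The converse (ii),(iii)$\Rightarrow$(i) comes from choosing, for given $x,y$, a ball $B$ containing both of radius comparable to $\rho(x,y)$ and combining the two power bounds to control $|p(x)-p(y)|\log(1/|B|)$, which is exactly \labelcref{inequ:2.4-liu2022characterisation}.

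Part \labelcref{enumerate:charact-norm-Lie-variable-1} then follows by sandwiching the modular. For $|B|\le 1$, testing with $\eta=|B|^{1/p_{+}(B)}$ and $\eta=|B|^{1/p_{-}(B)}$ and using $p_{-}(B)\le p(y)\le p_{+}(B)$ gives $|B|^{1/p_{-}(B)}\le\|\dchi_{B}\|_{p(\cdot)}\le|B|^{1/p_{+}(B)}$; then \labelcref{enumerate:charact-norm-Lie-variable} shows $|B|^{1/p_{-}(B)-1/p_{+}(B)}$ is bounded, collapsing both endpoints (and $|B|^{1/p(x)}$) to a single comparable quantity. The case $|B|\ge 1$ is handled identically, using the decay condition \labelcref{inequ:2.5-liu2022characterisation} to replace the local exponent by $p_{\infty}$.

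For \labelcref{enumerate:charact-norm-fraction-Lie-variable} the plan is to factor $\dchi_{B}=\dchi_{B}\cdot\dchi_{B}$ and apply the generalized H\"older inequality \cref{lem:holder-inequality-Lie-group}\,\labelcref{enumerate:variable-holder-Lie} with the constant exponent $s=Q/\alpha$: since $1/p(\cdot)=1/q(\cdot)+\alpha/Q=1/q(\cdot)+1/s$, this gives $\|\dchi_{B}\|_{p(\cdot)}\le C\|\dchi_{B}\|_{q(\cdot)}\|\dchi_{B}\|_{s}=C\,|B|^{\alpha/Q}\|\dchi_{B}\|_{q(\cdot)}$ by \labelcref{enumerate:charact-norm-Lie} (so the power of $|B|$ is the scaling exponent $\alpha/Q$ dictated by $1/p(\cdot)-1/q(\cdot)=\alpha/Q$).

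The main obstacle is \labelcref{enumerate:charact-norm-Lie-variable-dual}, the only assertion genuinely using $p(\cdot)\in\mathscr{B}(\mathbb{G})$. The lower bound $|B|\le C\|\dchi_{B}\|_{p(\cdot)}\|\dchi_{B}\|_{p'(\cdot)}$ is immediate from \cref{lem:holder-inequality-Lie-group}\,\labelcref{enumerate:variable-holder-Lie-1}. For the reverse I would run a duality-plus-maximal-function argument: for $x\in B$ and any $g\ge 0$ one has $\mathcal{M}g(x)\ge\frac{1}{|B|}\int_{B}g$, so $\frac{1}{|B|}\big(\int_{B}g\big)\dchi_{B}(x)\le\mathcal{M}g(x)$ pointwise; taking $L^{p(\cdot)}$ norms and invoking boundedness of $\mathcal{M}$ (guaranteed by $p(\cdot)\in\mathscr{B}(\mathbb{G})$, see \cref{lem.cor-2.4-variable-max-bounded}) gives $\frac{1}{|B|}\big(\int_{B}g\big)\|\dchi_{B}\|_{p(\cdot)}\le C\|g\|_{p(\cdot)}$. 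Choosing $g\ge 0$ through the norm conjugate (associate-norm) formula so that $\|g\|_{p(\cdot)}\le 1$ and $\int_{B}g\gtrsim\|\dchi_{B}\|_{p'(\cdot)}$ then yields the claim. The delicate points, where I expect the real work to lie, are the uniformity of all constants in $B$ (they reduce to the operator norm of $\mathcal{M}$ and the universal duality constant) and the correct invocation of associate-norm duality in the variable-exponent setting.
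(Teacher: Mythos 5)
Your proposal is correct in substance, but note that the paper does not actually prove this lemma: it cites the literature for parts (1)--(3) (Lemma 3.2 and 3.3 of the variable-exponent reference), and merely hints that part (4) follows from the generalized H\"older inequality and that part (5) follows ``by simple calculations'' from \cref{lem.cor-2.4-variable-max-bounded} together with part (3). Your treatment of parts (1)--(4) fleshes out exactly these standard routes; in particular your H\"older factorization with the constant exponent $s=Q/\alpha$ for part (4) is the argument the paper has in mind, and you correctly land on the scaling power $|B|^{\alpha/Q}$ (the ``$\beta/Q$'' in the statement is a slip in the paper; it is used later with exponent difference $(\alpha+\beta)/Q$ and factor $|B|^{(\alpha+\beta)/Q}$, consistent with your reading). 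For part (5) you take a genuinely different route: instead of combining the norm asymptotics of part (3) for $p(\cdot)$ and $p'(\cdot)$, you use the pointwise bound $\frac{1}{|B|}\big(\int_B g\big)\dchi_B\le\mathcal{M}g$, the boundedness of $\mathcal{M}$ on $L^{p(\cdot)}(\mathbb{G})$, and the associate-norm (norm conjugate) formula to produce $g$ with $\|g\|_{p(\cdot)}\le1$ and $\int_Bg\gtrsim\|\dchi_B\|_{p'(\cdot)}$. This is the classical ``$\mathscr{B}$ implies the $A_{p(\cdot)}$-type condition'' argument, and it is arguably better matched to the hypothesis than the paper's hint, since it needs only $p(\cdot)\in\mathscr{B}(\mathbb{G})$ whereas part (3) requires log-H\"older continuity; its only extra ingredient is the norm conjugate formula, which holds on $\mathbb{G}$ with Haar measure. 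Two small caveats: in part (3) the case $|B|\ge1$ is not handled ``identically'' to $|B|\le1$ --- over a large ball $p_+(B)-p__-(B)$ need not be small, so one cannot sandwich with $p_\pm(B)$; the standard proof uses the decay condition \labelcref{inequ:2.3-liu2022characterisation} together with an integrability estimate over annuli (or the known reduction in the cited literature) --- and in the small-ball case the quantity you need bounded is $|B|^{1/p_+(B)-1/p_-(B)}$ (equivalently, $|B|^{1/p_-(B)-1/p_+(B)}$ bounded away from zero), which does follow from part (2) since $1/p(\cdot)$ inherits the log-H\"older estimate.
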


Hereafter, for a function $b$ defined on $\mathbb{G}$, we write
\begin{align*}
  b^{-}(x) :=- \min\{b, 0\} =
\begin{cases}
 0,  & \text{if}\ b(x) \ge 0  \\
 |b(x)|, & \text{if}\ b(x) < 0
\end{cases}
\end{align*}
and  $b^{+}(x) =|b(x)|-b^{-}(x)$. Obviously, $b(x)=b^{+}(x)-b^{-}(x)$.

\begin{lemma}\cite{wu2023some} \label{lem:non-negative-max-lip}
 Let $0 <\beta <1$ and $b$ be a locally integrable function on $\mathbb{G}$. Then the following assertions are equivalent:
\begin{enumerate}[label=(\roman*)]
\item   $b\in  \Lambda_{\beta}(\mathbb{G})$  and $b\ge 0$.
     \label{enumerate:Lem-non-negative-max-lip-1}
   \item For all $1\le s<\infty$,  there exists a positive constant $C$ such that
\begin{align} \label{inequ:non-negative-max-lip}
 \sup_{B} |B|^{-\beta/Q}  \left( |B|^{-1} \dint_{B}  |b(x) -M_{B}(b)(x)  |^{s} \mathd x \right)^{1/s} \le C.
\end{align}
    \label{enumerate:Lem-non-negative-max-lip-2}
   \item \labelcref{inequ:non-negative-max-lip} holds for some $1\le s<\infty$.
     \label{enumerate:Lem-non-negative-max-lip-3}
\end{enumerate}
\end{lemma}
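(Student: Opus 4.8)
The plan is to prove the cyclic chain \labelcref{enumerate:Lem-non-negative-max-lip-1} $\Rightarrow$ \labelcref{enumerate:Lem-non-negative-max-lip-2} $\Rightarrow$ \labelcref{enumerate:Lem-non-negative-max-lip-3} $\Rightarrow$ \labelcref{enumerate:Lem-non-negative-max-lip-1}, where the middle implication is trivial since a bound valid for all $s$ in particular holds for one. The whole argument rests on two elementary pointwise facts about the local maximal function: because $M_{B}(b)$ averages $|b|$ over subballs shrinking to a given interior point, Lebesgue differentiation gives $M_{B}(b)(x)\ge|b(x)|$ for a.e. $x\in B$; and taking the full ball $B$ as a competitor gives $M_{B}(b)(x)\ge (|b|)_{B}$.

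For \labelcref{enumerate:Lem-non-negative-max-lip-1}$\Rightarrow$\labelcref{enumerate:Lem-non-negative-max-lip-2} I would establish the two-sided pointwise estimate
\begin{align*}
 0\le M_{B}(b)(x)-b(x)\le C\|b\|_{\Lambda_{\beta}(\mathbb{G})}|B|^{\beta/Q},\qquad \text{a.e. } x\in B.
\end{align*}
The lower bound is exactly $M_{B}(b)(x)\ge|b(x)|=b(x)$, using $b\ge0$. For the upper bound, for any competitor ball $B'$ with $x\in B'\subset B$ I would write $\frac{1}{|B'|}\int_{B'}b=b(x)+\frac{1}{|B'|}\int_{B'}(b(y)-b(x))\,\mathd y$ and estimate $|b(y)-b(x)|\le\|b\|_{\Lambda_{\beta}(\mathbb{G})}\rho(x^{-1}y)^{\beta}\le C\|b\|_{\Lambda_{\beta}(\mathbb{G})}|B'|^{\beta/Q}\le C\|b\|_{\Lambda_{\beta}(\mathbb{G})}|B|^{\beta/Q}$, using that $\rho(x,y)$ is comparable to the radius of $B'$ and that $|B'|\le|B|$; taking the supremum over $B'$ gives the claim. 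The displayed estimate yields $|b(x)-M_{B}(b)(x)|\le C\|b\|_{\Lambda_{\beta}(\mathbb{G})}|B|^{\beta/Q}$, and inserting this into the left side of \labelcref{inequ:non-negative-max-lip} cancels the factor $|B|^{\beta/Q}$, giving the bound uniformly in $B$ and in $s$.

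The substantial direction is \labelcref{enumerate:Lem-non-negative-max-lip-3}$\Rightarrow$\labelcref{enumerate:Lem-non-negative-max-lip-1}, which I would split into nonnegativity and the Lipschitz bound. For nonnegativity, from $M_{B}(b)(x)\ge|b(x)|$ I get $|b(x)-M_{B}(b)(x)|=M_{B}(b)(x)-b(x)\ge|b(x)|-b(x)=2b^{-}(x)$, so \labelcref{inequ:non-negative-max-lip} forces $\big(\frac{1}{|B|}\int_{B}(b^{-})^{s}\big)^{1/s}\le C|B|^{\beta/Q}$ for every ball $B$; letting $B$ shrink to a Lebesgue point $x_{0}$ of $(b^{-})^{s}$, the left side tends to $b^{-}(x_{0})$ while the right side tends to $0$, whence $b^{-}=0$ a.e., i.e. $b\ge0$. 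For the Lipschitz bound I would invoke \cref{lem:2.2-li2003lipschitz}\labelcref{enumerate:property-lip-Lie-1}, by which it suffices to control the $\lip_{\beta,1}$ seminorm. Writing $|b-b_{B}|\le|b-M_{B}(b)|+|M_{B}(b)-b_{B}|$ and averaging, the first term is $\le C|B|^{\beta/Q}$ by \labelcref{inequ:non-negative-max-lip} and Jensen (this is where $s\ge1$ enters). For the second, now that $b\ge0$ gives $b_{B}=(|b|)_{B}\le M_{B}(b)(x)$, the integrand is nonnegative and telescopes:
\begin{align*}
 \frac{1}{|B|}\dint_{B}\big(M_{B}(b)(x)-b_{B}\big)\,\mathd x=\frac{1}{|B|}\dint_{B}\big(M_{B}(b)(x)-b(x)\big)\,\mathd x\le C|B|^{\beta/Q},
\end{align*}
again by \labelcref{inequ:non-negative-max-lip}. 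Combining the two estimates gives $|B|^{-\beta/Q}\frac{1}{|B|}\int_{B}|b-b_{B}|\le C$ uniformly, hence $b\in\lip_{\beta}(\mathbb{G})=\Lambda_{\beta}(\mathbb{G})$.

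I expect the main obstacle to be the nonnegativity extraction in \labelcref{enumerate:Lem-non-negative-max-lip-3}$\Rightarrow$\labelcref{enumerate:Lem-non-negative-max-lip-1}: it is precisely the replacement of the average $b_{B}$ by the larger quantity $M_{B}(b)$ in \labelcref{inequ:non-negative-max-lip} that encodes the sign information, and one must argue carefully, through Lebesgue points and shrinking balls, that the resulting control of $b^{-}$ forces it to vanish. The reduction to $s=1$ for the Lipschitz part, justified by \cref{lem:2.2-li2003lipschitz}, is what allows the telescoping identity to close the estimate without circularity.
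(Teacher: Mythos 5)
Your proposal is correct, and it follows the standard argument: the paper itself does not prove this lemma but cites it from \cite{wu2023some}, and the proof there runs along exactly the lines you describe — the cyclic chain (i)$\Rightarrow$(ii)$\Rightarrow$(iii)$\Rightarrow$(i) built on the two pointwise facts $M_{B}(b)\ge |b|$ a.e.\ (Lebesgue differentiation, valid on $\mathbb{G}$ since the Haar measure is doubling) and $M_{B}(b)\ge b_{B}$, with $2b^{-}\le |b-M_{B}(b)|$ and shrinking balls yielding $b\ge 0$, and the telescoping average of $M_{B}(b)-b_{B}$ plus \cref{lem:2.2-li2003lipschitz} yielding $b\in\Lambda_{\beta}(\mathbb{G})$. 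The only point worth making explicit is that $(b^{-})^{s}$ is locally integrable because it is dominated by $|b-M_{B}(b)|^{s}$, which is integrable on every ball by the assumed bound (or, more simply, reduce to $s=1$ by Jensen before differentiating).
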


Now we recall the Hardy-Littlewood-Sobolev inequality for the  fractional maximal function $\mathcal{M}_{\alpha}$ on Lie group.   The first part of \cref{lem:frac-max-Lie}  can be obtained from \cite{bernardis1994two} (see Theorem 1.6, or \cite{kokilashvili1989fractional,pan1992fractional,wheeden1993characterization},or Theorem A1 in \cite{guliyev2019characterizations}), and the second part comes from  Lemma 2.5 of \cite{liu2019multilinear} (or see theorem 1.1 in \cite{cruz2018boundedness}) in the framework of variable exponent Lebesgue spaces.
\begin{lemma}  \label{lem:frac-max-Lie}
   Let $0 \le\alpha<Q$.
\begin{enumerate}[label=(\arabic*),itemindent=2em]
\item  If  $1/q = 1/p  -\alpha/Q$ with $1<p\le Q/\alpha$, then  $\mathcal{M}_{\alpha} $ is bounded from $L^{p}(\mathbb{G})$ to $L^{q}(\mathbb{G})$.
   \label{enumerate:thm-A1-guliyev2019characterizations}
 \item   If $p(\cdot)\in   \mathscr{C}^{\log}(\mathbb{G}) \cap \mathscr{P}(\mathbb{G})$   with $p_{+}\le Q/\alpha$, and define $q(\cdot)$ pointwise by
\begin{align*}
   \dfrac{1}{q(x) }=\dfrac{1}{p(x)}- \dfrac{\alpha}{Q}  \qquad  \text{for} \ \almostevery~  x \in \mathbb{G}.
\end{align*}
 Then there exists a positive constant $C$ such that the inequality
\begin{align*}
 \|\mathcal{M}_{\alpha}(f)\|_{q(\cdot)}\le C \|f \|_{p(\cdot)}
\end{align*}
holds for all  $f \in L^{p(\cdot)}(\mathbb{G})$.
    \label{enumerate:lem-5-liu2019multilinear} 
\end{enumerate}
\end{lemma}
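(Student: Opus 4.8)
The plan is to prove the two parts separately, reducing each to the boundedness of the Hardy--Littlewood maximal operator $\mathcal{M}$ (on $L^{p}$ for part \labelcref{enumerate:thm-A1-guliyev2019characterizations}, on $L^{p(\cdot)}$ for part \labelcref{enumerate:lem-5-liu2019multilinear}), which is available on $\mathbb{G}$ precisely because the Haar measure is doubling. The engine in both cases is a pointwise interpolation estimate for $\mathcal{M}_{\alpha}$ in terms of $\mathcal{M}$; the constant-exponent version is elementary, while the variable version requires the $\log$-H\"{o}lder machinery recorded in the preliminaries.

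For part \labelcref{enumerate:thm-A1-guliyev2019characterizations}, fix $1<p<Q/\alpha$ and $x\in\mathbb{G}$. For any $\mathbb{G}$-ball $B\ni x$, H\"{o}lder's inequality gives $|B|^{-1}\int_{B}|f|\le |B|^{-1/p}\|f\|_{L^{p}(\mathbb{G})}$, while trivially $|B|^{-1}\int_{B}|f|\le\mathcal{M}f(x)$. Writing the fractional average $|B|^{\alpha/Q}\,|B|^{-1}\int_{B}|f|$ as a product of these two bounds with exponents $1-\theta$ and $\theta$, and choosing $\theta=\alpha p/Q\in(0,1)$ to cancel the power of $|B|$, I obtain
\[
\mathcal{M}_{\alpha}f(x)\le\big(\mathcal{M}f(x)\big)^{1-\alpha p/Q}\,\|f\|_{L^{p}(\mathbb{G})}^{\alpha p/Q}.
\]
Since $1/q=1/p-\alpha/Q$ forces $q(1-\alpha p/Q)=p$, raising to the power $q$, integrating, and bookkeeping the exponents reduces the claim to the classical estimate $\|\mathcal{M}f\|_{L^{p}}\le C\|f\|_{L^{p}}$ for $p>1$, yielding $\|\mathcal{M}_{\alpha}f\|_{L^{q}}\le C\|f\|_{L^{p}}$. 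The endpoint $p=Q/\alpha$ (so $q=\infty$) is handled directly: H\"{o}lder alone gives $|B|^{\alpha/Q}\,|B|^{-1}\int_{B}|f|\le\|f\|_{L^{Q/\alpha}(\mathbb{G})}$, i.e. $\mathcal{M}_{\alpha}f\le\|f\|_{L^{Q/\alpha}(\mathbb{G})}$ pointwise.

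For part \labelcref{enumerate:lem-5-liu2019multilinear}, I would first observe that $p(\cdot)\in\mathscr{C}^{\log}(\mathbb{G})\cap\mathscr{P}(\mathbb{G})$ gives $p(\cdot)\in\mathscr{B}(\mathbb{G})$ by \cref{lem.cor-2.4-variable-max-bounded}, so $\mathcal{M}$ is bounded on $L^{p(\cdot)}(\mathbb{G})$. By homogeneity it suffices to show $\int_{\mathbb{G}}\mathcal{M}_{\alpha}f(x)^{q(x)}\,\mathd x\le C$ whenever $\|f\|_{p(\cdot)}\le1$. The strategy is to upgrade the pointwise bound of part \labelcref{enumerate:thm-A1-guliyev2019characterizations} to the variable setting, namely to establish
\[
\mathcal{M}_{\alpha}f(x)\le C\big(\mathcal{M}f(x)\big)^{p(x)/q(x)}\qquad\text{when}\ \|f\|_{p(\cdot)}\le1,
\]
since then $\mathcal{M}_{\alpha}f(x)^{q(x)}\le C\,\mathcal{M}f(x)^{p(x)}$, and integrating together with the modular form of the $L^{p(\cdot)}$-boundedness of $\mathcal{M}$ closes the argument. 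To produce this bound I would, for a ball $B\ni x$, control the average $|B|^{-1}\int_{B}|f|$ by the generalized H\"{o}lder inequality \cref{lem:holder-inequality-Lie-group} and the characteristic-function norm estimates \cref{lem:norm-characteristic-Lie-liu2022characterisation}, which replace the factors $\|f\|_{L^{p}}$ and $|B|^{-1/p}$ of part \labelcref{enumerate:thm-A1-guliyev2019characterizations} by $|B|^{-1}\|\dchi_{B}\|_{p'(\cdot)}\approx|B|^{-1/p(\cdot)}$ evaluated at a representative point of $B$.

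The main obstacle is exactly the passage from a fixed representative exponent on $B$ to the pointwise values $p(x),q(x)$: cancelling the power of $|B|$ as in part \labelcref{enumerate:thm-A1-guliyev2019characterizations} now requires the local $\log$-H\"{o}lder condition \labelcref{inequ:2.4-liu2022characterisation} to compare $|B|^{\alpha/Q}$ with the local value of $p$, and the decay condition \labelcref{inequ:2.5-liu2022characterisation} (equivalently the existence of $p_{\infty}$) to treat large balls. I would therefore split the supremum defining $\mathcal{M}_{\alpha}f(x)$ according to $|B|\le1$ and $|B|\ge1$, applying \cref{lem:norm-characteristic-Lie-liu2022characterisation}\labelcref{enumerate:charact-norm-Lie-variable-1} in each regime, with the small-ball case governed by $\mathscr{C}_{0}^{\log}$ and the large-ball case by $\mathscr{C}_{\infty}^{\log}$. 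Keeping the resulting $\log$-H\"{o}lder error factors uniformly bounded in $B$, so that they do not accumulate when the supremum over $B$ is taken, is the delicate point on which the whole estimate turns.
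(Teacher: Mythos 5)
Your part (1) is correct and self-contained: the Hedberg-type splitting with $\theta=\alpha p/Q$, the exponent bookkeeping $q(1-\alpha p/Q)=p$, and the endpoint case $p=Q/\alpha$ are all standard and valid on $\mathbb{G}$, since the doubling property of the Haar measure gives the $L^{p}$-boundedness of $\mathcal{M}$. Note, though, that the paper does not prove this lemma at all — both parts are quoted from the literature (Bernardis et al.\ for (1); Lemma 2.5 of Liu et al.\ and Cruz-Uribe's theorem on spaces of homogeneous type for (2)) — so the relevant question is only whether your argument is sound.

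For part (2) there is a genuine gap: the pointwise inequality $\mathcal{M}_{\alpha}f(x)\le C\,(\mathcal{M}f(x))^{p(x)/q(x)}$ for \emph{all} $f$ with $\|f\|_{p(\cdot)}\le 1$, on which your whole strategy rests, is false on the infinite-measure space $\mathbb{G}$. Suppose $p(x_{0})<p_{\infty}$ at some point $x_{0}$ (perfectly compatible with $\mathscr{C}^{\log}(\mathbb{G})$), and take $f_{R}=\chi_{B_{R}}/\|\chi_{B_{R}}\|_{p(\cdot)}$ with $R$ large, so that $x_{0}\in B_{R}$ and, by the characteristic-function estimate for $|B_{R}|\ge 1$, $\|\chi_{B_{R}}\|_{p(\cdot)}\sim|B_{R}|^{1/p_{\infty}}$. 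Then $\mathcal{M}f_{R}(x_{0})\sim|B_{R}|^{-1/p_{\infty}}$, while $\mathcal{M}_{\alpha}f_{R}(x_{0})\gtrsim|B_{R}|^{\alpha/Q-1/p_{\infty}}$, and since $p(x_{0})/q(x_{0})=1-\alpha p(x_{0})/Q$ the ratio $\mathcal{M}_{\alpha}f_{R}(x_{0})\big/\big(\mathcal{M}f_{R}(x_{0})\big)^{p(x_{0})/q(x_{0})}\sim|B_{R}|^{(\alpha/Q)(1-p(x_{0})/p_{\infty})}\to\infty$ as $R\to\infty$. So the "delicate point" you flag in the large-ball regime is not a technicality to be tracked carefully — the inequality you propose to establish simply fails there, and no amount of uniformity in the $\log$-H\"{o}lder error factors can rescue it. The standard repair, which is exactly what the cited proofs do, is to decompose $f=f\chi_{\{|f|>1\}}+f\chi_{\{|f|\le 1\}}$: the Hedberg-type pointwise bound with exponent $p(x)/q(x)$ does hold for the unbounded part, whereas the bounded part is handled using the decay condition \labelcref{inequ:2.3-liu2022characterisation}, producing a modular estimate of the form $\mathcal{M}_{\alpha}f(x)^{q(x)}\le C\,\mathcal{M}f(x)^{p(x)}+C\,h(x)$ with an additive error $h\in L^{1}(\mathbb{G})$, which is what one then integrates. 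A smaller point: your reduction to the modular inequality tacitly assumes $q_{+}<\infty$, i.e.\ $p_{+}<Q/\alpha$ strictly; the borderline $p_{+}=Q/\alpha$ allowed in the statement would need a separate remark.
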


By  \labelcref{enumerate:thm-A1-guliyev2019characterizations} of \cref{lem:frac-max-Lie}, if  $0 <\alpha<Q$,  $1<p< Q/\alpha$ and $f\in L^{p}(\mathbb{G})$, then $\mathcal{M}_{\alpha} (f)(x)<\infty $ almost everywhere. A similar result is also valid in variable Lebesgue spaces. And the method of proof can refer to Lemma 2.6 in \cite{zhang2019some}, so we omit its proof.
\begin{lemma}  \label{lem:frac-max-almost-every}
   Let $0 <\alpha<Q$, $p(\cdot)\in \mathscr{P}(\mathbb{G})$ and $p_{+}<Q/\alpha$. If   $f\in L^{p(\cdot)}(\mathbb{G})$, then  $\mathcal{M}_{\alpha}(f)(x)<\infty $ for almost everywhere $x\in \mathbb{G}$.
\end{lemma}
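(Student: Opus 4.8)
The plan is to reduce the almost-everywhere finiteness of $\mathcal{M}_{\alpha}(f)$ to the classical constant-exponent Hardy--Littlewood--Sobolev bound of \cref{lem:frac-max-Lie} \labelcref{enumerate:thm-A1-guliyev2019characterizations}, by splitting $f$ according to the size of its values. The guiding observation is that no regularity of $p(\cdot)$ is required: the variable exponent enters only through the two-sided bound $1<p_{-}\le p(x)\le p_{+}<Q/\alpha$. Since $p_{+}<\infty$, the hypothesis $f\in L^{p(\cdot)}(\mathbb{G})$ forces the modular to be finite, and after replacing $f$ by $f/\eta$ for a suitable $\eta>0$ (which only rescales $\mathcal{M}_{\alpha}(f)$ and so does not affect a.e.\ finiteness) I may assume $\int_{\mathbb{G}}|f(x)|^{p(x)}\,\mathd x<\infty$.

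First I would decompose $f=f_{1}+f_{2}$ with $f_{1}=f\dchi_{\{|f|>1\}}$ and $f_{2}=f\dchi_{\{|f|\le 1\}}$. On the set $\{|f|>1\}$ one has $|f|^{p_{-}}\le |f|^{p(x)}$, whence $f_{1}\in L^{p_{-}}(\mathbb{G})$; on $\{|f|\le 1\}$ one has $|f|^{p_{+}}\le |f|^{p(x)}$, whence $f_{2}\in L^{p_{+}}(\mathbb{G})$. Both memberships follow at once from the finiteness of the modular established above.

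Next I would use the subadditivity $\mathcal{M}_{\alpha}(f)\le \mathcal{M}_{\alpha}(f_{1})+\mathcal{M}_{\alpha}(f_{2})$, which is immediate from $|f|\le |f_{1}|+|f_{2}|$ together with the monotonicity of the integral and of the supremum defining $\mathcal{M}_{\alpha}$. Because $1<p_{-}\le p_{+}<Q/\alpha$, both constant exponents lie strictly below the critical value $Q/\alpha$; defining $q_{1},q_{2}$ by $1/q_{1}=1/p_{-}-\alpha/Q$ and $1/q_{2}=1/p_{+}-\alpha/Q$ (both positive, so $q_{1},q_{2}<\infty$), \cref{lem:frac-max-Lie} \labelcref{enumerate:thm-A1-guliyev2019characterizations} gives $\mathcal{M}_{\alpha}(f_{1})\in L^{q_{1}}(\mathbb{G})$ and $\mathcal{M}_{\alpha}(f_{2})\in L^{q_{2}}(\mathbb{G})$. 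In particular each of these is finite almost everywhere, and hence so is their sum, yielding $\mathcal{M}_{\alpha}(f)(x)<\infty$ for a.e.\ $x\in\mathbb{G}$.

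The argument is essentially routine, and I do not expect a serious obstacle; the only points needing a little care are the passage from finite Luxemburg norm to finite modular (which genuinely uses $p_{+}<\infty$) and the verification that the strict inequality $p_{+}<Q/\alpha$ keeps both $p_{-}$ and $p_{+}$ inside the admissible range $(1,Q/\alpha)$, so that the constant-exponent estimate is applicable to each piece separately.
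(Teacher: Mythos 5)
Your argument is correct, and it is essentially the standard proof the paper has in mind: the paper omits its own proof and refers to Lemma 2.6 of Zhang et al., whose method is exactly this normalization to a finite modular, the splitting $f=f\dchi_{\{|f|>1\}}+f\dchi_{\{|f|\le 1\}}$ into $L^{p_{-}}$ and $L^{p_{+}}$ pieces, and the constant-exponent Hardy--Littlewood--Sobolev bound applied to each piece. No gaps; the two points you flag (finite modular from $p_{+}<\infty$, and $1<p_{-}\le p_{+}<Q/\alpha$ keeping both exponents admissible) are handled correctly.
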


Now, we give the following pointwise estimate for $[b,M_{\alpha}] $  on $\mathbb{G}$ when $b\in \Lambda_{\beta}(\mathbb{G})$ (see Lemma 2.10, \cite{wu2023some}).
\begin{lemma} \label{lem:frac-maximal-pointwise}
Let   $0\le\alpha<Q$, $0<\beta <1$, $0<\alpha+\beta<Q$ and $f: \mathbb{G} \to \mathbb{R}$ be a locally integrable function.  If  $b\in \Lambda_{\beta}(\mathbb{G})$ and $b\ge 0$, then, for arbitrary   $x\in \mathbb{G} $ such that $M_{\alpha} (f)(x) <\infty$, we have
\begin{align*}
  \big|[b,M_{\alpha}] (f)(x)\big|  &\le \|b\|_{\Lambda_{\beta}(\mathbb{G})} M_{\alpha+\beta} (f)(x).
\end{align*}
\end{lemma}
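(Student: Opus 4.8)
The plan is to reduce the nonlinear commutator $[b,M_{\alpha}](f)(x)=b(x)M_{\alpha}(f)(x)-M_{\alpha}(bf)(x)$ to the positive, sublinear maximal commutator $\mathcal{M}_{\alpha,b}$ of \cref{def.commutator-frac-max}, and then estimate the latter directly from the Lipschitz hypothesis. The first—and decisive—step is to exploit the sign condition $b\ge 0$: since $b(x)=|b(x)|$, the nonnegative scalar $b(x)$ may be pulled inside the fractional maximal operator, so that $b(x)M_{\alpha}(f)(x)=M_{\alpha}\big(b(x)f\big)(x)$, where $b(x)f$ denotes the function $y\mapsto b(x)f(y)$. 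Consequently $[b,M_{\alpha}](f)(x)=M_{\alpha}(b(x)f)(x)-M_{\alpha}(bf)(x)$ becomes a difference of $M_{\alpha}$ evaluated at two functions whose difference is $\big(b(x)-b(\cdot)\big)f$.

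Next I would invoke the sublinearity and positivity of $M_{\alpha}$ to obtain the reverse–triangle bound $\big|M_{\alpha}(g_{1})(x)-M_{\alpha}(g_{2})(x)\big|\le M_{\alpha}(g_{1}-g_{2})(x)$ for $g_{1}=b(x)f$ and $g_{2}=bf$. Here the hypothesis $M_{\alpha}(f)(x)<\infty$ is what makes this legitimate: it guarantees $M_{\alpha}(b(x)f)(x)=b(x)M_{\alpha}(f)(x)<\infty$, so no $\infty-\infty$ ambiguity arises and the commutator is well defined at $x$. Recognizing that $M_{\alpha}(g_{1}-g_{2})(x)$ equals $\sup_{B\ni x}|B|^{\alpha/Q-1}\int_{B}|b(x)-b(y)|\,|f(y)|\,\mathd y=\mathcal{M}_{\alpha,b}(f)(x)$, this step yields the pointwise domination $|[b,M_{\alpha}](f)(x)|\le \mathcal{M}_{\alpha,b}(f)(x)$.

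It then remains to bound $\mathcal{M}_{\alpha,b}(f)(x)$ by $\|b\|_{\Lambda_{\beta}(\mathbb{G})}M_{\alpha+\beta}(f)(x)$. Fix a $\mathbb{G}$-ball $B\ni x$ of radius $r$. For $y\in B$ the quasi-triangle inequality gives $\rho(y^{-1}x)\le 2c_{0}r$, while $|B|=c_{1}r^{Q}$, so $\rho(y^{-1}x)^{\beta}\le C|B|^{\beta/Q}$; combined with the Lipschitz estimate $|b(x)-b(y)|\le \|b\|_{\Lambda_{\beta}(\mathbb{G})}\rho(y^{-1}x)^{\beta}$ this controls the integrand uniformly on $B$. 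Inserting this bound and carrying out the exponent arithmetic $\tfrac{\beta}{Q}+\big(\tfrac{\alpha}{Q}-1\big)=\tfrac{\alpha+\beta}{Q}-1$ converts the defining supremum of $\mathcal{M}_{\alpha,b}(f)(x)$ into that of $M_{\alpha+\beta}(f)(x)$, which gives the asserted inequality (the dimensional constant produced by the quasi-triangle inequality being absorbed into the generic constant $C$).

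I expect the main obstacle to be conceptual rather than computational: it lies in the very first step, where the nonnegativity of $b$ is indispensable for the identity $b(x)M_{\alpha}(f)(x)=M_{\alpha}(b(x)f)(x)$. If $b(x)<0$ were allowed, the left-hand side would be negative while the right-hand side stays nonnegative, and the whole reduction to $\mathcal{M}_{\alpha,b}$ would collapse; this is precisely why the sign hypothesis appears in the statement and cannot be dropped. The secondary point requiring care is the finiteness bookkeeping around the reverse–triangle inequality, which is handled by the standing assumption $M_{\alpha}(f)(x)<\infty$.
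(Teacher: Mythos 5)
The paper does not prove this lemma itself (it is quoted from \cite{wu2023some}), and your argument is exactly the standard one behind it: use $b\ge 0$ to write $b(x)M_{\alpha}(f)(x)=M_{\alpha}\big(b(x)f\big)(x)$, use the sublinearity of $M_{\alpha}$ (with $M_{\alpha}(f)(x)<\infty$ preventing an $\infty-\infty$ ambiguity) to dominate $\big|[b,M_{\alpha}](f)(x)\big|$ by $\mathcal{M}_{\alpha,b}(f)(x)$, and then apply the Lipschitz estimate $|b(x)-b(y)|\le \|b\|_{\Lambda_{\beta}(\mathbb{G})}\rho(y^{-1}x)^{\beta}$ on each ball; this is correct and is essentially the intended proof. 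The only discrepancy is the constant: since for $x,y\in B$ one only has $\rho(y^{-1}x)\le 2c_{0}r$ while $|B|^{1/Q}=c_{1}^{1/Q}r$, your argument yields $C\,\|b\|_{\Lambda_{\beta}(\mathbb{G})}M_{\alpha+\beta}(f)(x)$ with $C=C(c_{0},c_{1},\beta,Q)$ rather than the bare $\|b\|_{\Lambda_{\beta}(\mathbb{G})}$ written in the statement; this is harmless for every application in the paper (the lemma is only ever used up to unspecified constants, e.g.\ in the proofs of \cref{thm:nonlinear-frac-max-lip-variable,thm:frac-max-lip-variable}), but you should either record the constant explicitly or note that the literal constant-one form requires a normalization of $\rho$ and of the Haar measure.
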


The first two parts of the following results can be referred to    \cite[page 3331]{bastero2000commutators} or \cite{zhang2009commutators},   through elementary calculations and derivations, it is easy to check that the  assertions are true.
 The pointwise relations of the last part can be obtained from  Lemma 2.7 and Remark 6 in \cite{wu2023some} (also refer to \cite{bastero2000commutators}, \cite[Lemma 2.3]{zhang2009commutators} and \cite[Lemma 3.1]{guliyev2023characterizations}), 
\begin{lemma} \label{lem:frac-max-pointwise-assert}
Let  $b$ be a locally integrable function on $\mathbb{G}$ and $B \subset \mathbb{G}$ be  an arbitrary  given    ball.
 \begin{enumerate}[label=(\roman*)]
\item  If $E=\{x\in B: b(x)\le b_{B}\}$ and $F=  B\setminus E =\{x\in B: b(x)> b_{B}\}$. Then the following equality
\begin{align*}
    \dint_{E} |b(x)-b_{B}| \mathd x  &=  \dint_{F} |b(x)-b_{B}| \mathd x
\end{align*}
  is trivially true.
    \label{enumerate:lem-2.13-1-wu2023some}
\item    Then for any  $x\in B$, we have
\begin{align*}
    |b_{B}|   &\le  |B|^{-\alpha/Q} M_{\alpha,B}(b)(x).
\end{align*}
    \label{enumerate:lem-2.13-2-wu2023some}
\item If $0 \le \alpha<Q$.  Then, for all $x\in B$, we have
\begin{align*}
 M_{\alpha} (b\dchi_{B})(x)  &=  M_{\alpha,B}(b)(x)
 \\ \intertext{and}
    M_{\alpha} (\dchi_{B})(x)  &=  M_{\alpha,B}(\dchi_{B})(x)=|B|^{\alpha/Q}.
\end{align*}
 \label{enumerate:lem-2.11-wu2023some}
\end{enumerate}
\end{lemma}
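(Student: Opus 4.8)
The plan is to treat the three parts separately, since each rests on a different elementary observation, and to isolate one genuinely delicate geometric comparison inside part~\labelcref{enumerate:lem-2.11-wu2023some}. For \labelcref{enumerate:lem-2.13-1-wu2023some} I would start from the identity $\int_B (b(x)-b_B)\,\mathd x = 0$, which is immediate from $b_B = |B|^{-1}\int_B b$. Splitting the integral over $E$ and $F$ and using that $b-b_B\le 0$ on $E$ while $b-b_B>0$ on $F$, so that $|b-b_B| = b_B-b$ on $E$ and $|b-b_B| = b-b_B$ on $F$, the vanishing of the total integral rearranges at once into $\int_E|b-b_B| = \int_F|b-b_B|$; this is pure bookkeeping and needs no estimate. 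For \labelcref{enumerate:lem-2.13-2-wu2023some} the idea is simply to test the supremum defining $M_{\alpha,B}(b)(x)$ against the admissible ball $B$ itself (it contains $x$ and is contained in $B$), which gives $\int_B|b|\le |B|^{1-\alpha/Q}M_{\alpha,B}(b)(x)$; combining this with $|b_B|\le |B|^{-1}\int_B|b|$ yields $|b_B|\le |B|^{-\alpha/Q}M_{\alpha,B}(b)(x)$ directly.

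Part \labelcref{enumerate:lem-2.11-wu2023some} is the substantive one. The inequality $M_{\alpha,B}(b)(x)\le M_\alpha(b\dchi_B)(x)$ is trivial, since every ball $B'$ with $x\in B'\subset B$ is admissible for $M_\alpha(b\dchi_B)$ and $\int_{B'}|b| = \int_{B'}|b\dchi_B|$. For the reverse inequality the guiding principle is that, because $b\dchi_B$ is supported in $B$, any ball $B'\ni x$ satisfies $\int_{B'}|b\dchi_B| = \int_{B'\cap B}|b|$, so protruding out of $B$ only enlarges the denominator without increasing the numerator and can never be advantageous. I would make this rigorous by a case split: if $B'\subset B$ the term is already admissible; if $B'\not\subset B$ with $|B'|\ge|B|$, then using $B'\cap B\subset B$ together with $|B'|^{1-\alpha/Q}\ge|B|^{1-\alpha/Q}$ bounds the term by $|B|^{-(1-\alpha/Q)}\int_B|b|\le M_{\alpha,B}(b)(x)$. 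The remaining case, a small ball $B'$ with $|B'|<|B|$ that still protrudes from $B$, is where the real work lies, and I would dispose of it by replacing $B'\cap B$ with a genuine sub-ball of $B$ containing $x$ of comparable measure, produced via the quasi-triangle inequality and the doubling property of the Haar measure; this is precisely the geometric content extracted from \cite[Lemma 2.7 and Remark 6]{wu2023some}. I expect this small-protruding-ball case to be the main obstacle.

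The explicit identity $M_\alpha(\dchi_B)(x) = M_{\alpha,B}(\dchi_B)(x) = |B|^{\alpha/Q}$ for $x\in B$ can then be read off cleanly and independently of the general first identity. Indeed, for any $B'$ with $x\in B'\subset B$ one has $|B'|^{-(1-\alpha/Q)}\int_{B'}\dchi_B = |B'|^{\alpha/Q}\le |B|^{\alpha/Q}$ with equality at $B'=B$, so $M_{\alpha,B}(\dchi_B)(x)=|B|^{\alpha/Q}$; and for an arbitrary $B'\ni x$ the bound $|B'|^{-(1-\alpha/Q)}|B'\cap B|\le |B'\cap B|^{\alpha/Q}\le|B|^{\alpha/Q}$, valid since $|B'|\ge|B'\cap B|$ and $\alpha\ge 0$, forces $M_\alpha(\dchi_B)(x)\le|B|^{\alpha/Q}$, matching the lower bound already furnished by the restricted maximal function.
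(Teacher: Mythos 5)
Your arguments for parts \labelcref{enumerate:lem-2.13-1-wu2023some} and \labelcref{enumerate:lem-2.13-2-wu2023some}, and for the second identity in \labelcref{enumerate:lem-2.11-wu2023some}, are correct: the mean-zero splitting over $E$ and $F$, testing the supremum defining $\mathcal{M}_{\alpha,B}$ with the ball $B$ itself, and the chain $|B'|^{\alpha/Q-1}|B'\cap B|\le |B'\cap B|^{\alpha/Q}\le |B|^{\alpha/Q}$ are exactly the ``elementary calculations'' the paper alludes to (the paper itself offers no proof of this lemma; it only cites \cite{wu2023some}, \cite{guliyev2023characterizations}, \cite{bastero2000commutators} and \cite{zhang2009commutators}).

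The gap is in the first identity of \labelcref{enumerate:lem-2.11-wu2023some}, precisely at the case you flag as the main obstacle, and the repair you propose cannot close it. For a small protruding ball $B'\ni x$ you would need a ball $B''$ with $x\in B''\subset B$, capturing the mass of $B'\cap B$, and of measure comparable to $|B'|$; such a ball need not exist, because $B'\cap B$ can hug the boundary of $B$, and a metric ball contained in $B$ whose closure meets $\partial B$ in more than one point must essentially be $B$ itself. Concretely, take $\mathbb{G}=\mathbb{R}^{2}$ (a stratified group with $Q=2$ and the Euclidean norm as homogeneous norm), $B$ the unit ball, $\alpha=0$, $b=\dchi_{S_+}+\dchi_{S_-}$ with $S_\pm$ balls of radius $\delta\ll\phi^{2}$ internally tangent to $\partial B$ at angles $\pm\phi$, and $x=((1-\delta)\cos\phi,0)$. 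The ball $B'=B(x,\sin\phi+2\delta)$ gives $\mathcal{M}(b\dchi_B)(x)\gtrsim \delta^{2}/\phi^{2}$, whereas any ball $B''$ with $x\in B''\subset B$ meeting $\supp b$ must pass within $O(\delta)$ of $\partial B$ at angle $\approx\phi$ while containing $x$, whose depth is only $O(\phi^{2})$, and the resulting sagitta constraint forces its radius to be at least $\tfrac12-o(1)$; hence $\mathcal{M}_{B}(b)(x)\lesssim\delta^{2}$. So no sub-ball of comparable measure exists in this configuration, and the asserted equality itself fails there, which shows the small-protruding-ball case cannot be disposed of by any doubling/quasi-triangle replacement. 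Note also that even where such a replacement is available, ``comparable measure'' only yields $\mathcal{M}_{\alpha}(b\dchi_B)\le C\,\mathcal{M}_{\alpha,B}(b)$, an equivalence up to a structural constant, never the equality claimed; the exact identity is a cube phenomenon (the intersection of two cubes fattens, inside the larger cube, to a cube of no larger measure), which is the setting of the Euclidean antecedents \cite{bastero2000commutators,zhang2009commutators}. As the later proofs in the paper use the identity $b-|B|^{-\alpha/Q}\mathcal{M}_{\alpha,B}(b)=|B|^{-\alpha/Q}[b,\mathcal{M}_{\alpha}](\dchi_B)$ on $B$, this is not a harmless discrepancy, and your write-up should either restrict to a setting where the equality can be proved or work with a two-sided pointwise bound and track the constants.
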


\begin{lemma}  \label{lem:identity-homogeneous-variable-Lie}   
  Given  $p(\cdot)\in   \mathscr{P}(\mathbb{G})$, then for all    $s>0$, we have
\begin{align*}
 \||f|^{s}\|_{q(\cdot)}=  \|f \|_{sp(\cdot)}^{s}.
\end{align*}

\end{lemma}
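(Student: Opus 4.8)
The plan is to unravel the Luxemburg functional and rescale the dilation parameter exactly once. (Here the left-hand exponent is read as $p(\cdot)$, matching the subscript with the exponent in its defining modular.) Fixing $s>0$ and applying \cref{def.variable-lebesgue-space-Lie} to the function $|f|^{s}$,
\begin{align*}
\big\||f|^{s}\big\|_{p(\cdot)} = \inf\Big\{\eta>0: \dint_{\mathbb{G}} \Big( \frac{|f(x)|^{s}}{\eta} \Big)^{p(x)} \mathd x \le 1\Big\}.
\end{align*}
The decisive observation is that $\eta\mapsto \eta^{1/s}$ is a bijection of $(0,\infty)$ onto itself, so I substitute $\eta=\mu^{s}$ with $\mu>0$; then $(|f(x)|^{s}/\eta)^{p(x)} = (|f(x)|/\mu)^{s\,p(x)}$, and the defining constraint becomes exactly $\int_{\mathbb{G}}(|f(x)|/\mu)^{sp(x)}\mathd x\le 1$, which is precisely the constraint in the definition of $\|f\|_{sp(\cdot)}$.

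Writing $S=\{\mu>0: \int_{\mathbb{G}}(|f(x)|/\mu)^{sp(x)}\mathd x\le1\}$ for this common admissible set, the previous step identifies $\||f|^{s}\|_{p(\cdot)}=\inf_{\mu\in S}\mu^{s}$ and $\|f\|_{sp(\cdot)}=\inf_{\mu\in S}\mu$. Since $t\mapsto t^{s}$ is continuous and strictly increasing on $(0,\infty)$, it interchanges with the infimum: continuity gives $\inf_{\mu\in S}\mu^{s}\le(\inf_{\mu\in S}\mu)^{s}$ along a minimizing sequence, and monotonicity gives the reverse inequality since every $\mu\in S$ satisfies $\mu\ge\inf S$. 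Hence $\inf_{\mu\in S}\mu^{s}=(\inf_{\mu\in S}\mu)^{s}$, which is exactly $\||f|^{s}\|_{p(\cdot)}=\|f\|_{sp(\cdot)}^{s}$.

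There is no serious obstacle here; the computation is in essence the scaling homogeneity of the modular $\varrho_{p(\cdot)}(g)=\int_{\mathbb{G}}|g|^{p(x)}\mathd x$ read off through the norm. The only points deserving a line of care are the degenerate endpoints: if $f=0$ almost everywhere both sides vanish, while if $\|f\|_{sp(\cdot)}=\infty$ (i.e.\ $S=\varnothing$) both sides are infinite by convention, so the monotone-bijection argument need only be invoked on the nondegenerate range $0<\|f\|_{sp(\cdot)}<\infty$. I would also note that the identity is a purely metric statement about Luxemburg functionals and never uses $sp(\cdot)\in\mathscr{P}(\mathbb{G})$, so it remains valid even when $s<1$ pushes the scaled exponent below $1$.
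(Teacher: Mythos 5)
Your proof is correct and follows essentially the same route as the paper's: both rescale the dilation parameter in the Luxemburg norm (your $\eta=\mu^{s}$ is the paper's $\lambda=\eta^{1/s}$) to identify the modular constraint for $|f|^{s}$ under $p(\cdot)$ with that for $f$ under $sp(\cdot)$, and you rightly read the misprinted $q(\cdot)$ in the statement as $p(\cdot)$. Your extra care about pulling the power $s$ through the infimum and about the degenerate cases only makes explicit what the paper leaves implicit.
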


\begin{proof}
The idea is taken from \cite[Proposition 2.18]{cruz2013variable}.  Let   $\lambda=\eta^{1/s}$, then,
it follows from \cref{def.variable-lebesgue-space-Lie} that
\begin{align*}
  \big\| |f|^{s} \big\|_{ L^{p(\cdot)} (\mathbb{G})}  &=\inf \Big\{\eta>0:  \int_{\mathbb{G}} \Big( \frac{|f(x)|^{s}}{\eta} \Big)^{p(x)} \mathrm{d}x \le 1 \Big\} \\
  &= \inf \Big\{ \lambda^{s}>0:  \int_{\mathbb{G}} \Big( \frac{|f(x)| }{\lambda} \Big)^{sp(x)} \mathrm{d}x \le 1 \Big\} \\
  &= \|f\|_{ L^{sp(\cdot)} (\mathbb{G})}^{s}.
\tag*{\qedhere}
\end{align*}
\end{proof}

\section{Proofs of the main results} 
\label{sec:result-proof}

Now we give the proofs of the \cref{thm:nonlinear-frac-max-lip-variable} and \cref{thm:frac-max-lip-variable}. 

\subsection{Proof of \cref{thm:nonlinear-frac-max-lip-variable}}

In order to prove \cref{thm:nonlinear-frac-max-lip-variable}, we first prove the following lemma.

\begin{lemma} \label{lem:frac-Lie-lip-variable-norm}
Let  $0 <\beta <1$ and $0 <\alpha <Q$. If   $b$  is a locally integrable function on $\mathbb{G}$ and satisfies
\begin{align} \label{inequ:lem-frac-Lie-lip-variable-norm}
\sup_{B\subset \mathbb{G}} \dfrac{1}{|B|^{\beta/Q}} \dfrac{\Big\| \big(b -|B|^{-\alpha/Q}\mathcal{M}_{\alpha,B} (b) \big) \dchi_{B} \Big\|_{L^{s(\cdot)}(\mathbb{G}) }}{\|\dchi_{B}\|_{L^{s(\cdot)}(\mathbb{G}) }}  < \infty
\end{align}
 for some  $s(\cdot) \in   \mathscr{B} (\mathbb{G}) $, then $b\in \Lambda_{\beta}(\mathbb{G})$.
\end{lemma}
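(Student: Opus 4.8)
The plan is to recover the Lipschitz condition from its fractional mean-oscillation form. By \cref{lem:2.2-li2003lipschitz} (the equivalence $\|b\|_{\Lambda_{\beta}(\mathbb{G})}\approx\|b\|_{\lip_{\beta,1}(\mathbb{G})}$), it suffices to prove that $b\in\lip_{\beta}(\mathbb{G})$, that is, that $\frac{1}{|B|^{\beta/Q}}\cdot\frac{1}{|B|}\dint_{B}|b(x)-b_{B}|\,\mathd x$ stays bounded over all balls $B\subset\mathbb{G}$. Writing $A_{B}(x):=|B|^{-\alpha/Q}\mathcal{M}_{\alpha,B}(b)(x)$, I would start from the pointwise splitting, valid for $x\in B$,
\[
|b(x)-b_{B}|\le \big|b(x)-A_{B}(x)\big| + \big(A_{B}(x)-b_{B}\big),
\]
where the second summand is genuinely nonnegative because $A_{B}(x)\ge|b_{B}|\ge b_{B}$ by the pointwise bound of \cref{lem:frac-max-pointwise-assert}. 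Averaging over $B$ splits the target quantity into two pieces $\mathrm{I}=\frac{1}{|B|}\dint_{B}|b-A_{B}|\,\mathd x$ and $\mathrm{II}=\frac{1}{|B|}\dint_{B}(A_{B}-b_{B})\,\mathd x$.

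For $\mathrm{I}$, I would apply the generalized H\"{o}lder inequality on $\mathbb{G}$ (\cref{lem:holder-inequality-Lie-group}) with the conjugate pair $s(\cdot),s'(\cdot)$ to obtain $\dint_{B}|b-A_{B}|\,\mathd x\le C\,\|(b-A_{B})\dchi_{B}\|_{s(\cdot)}\,\|\dchi_{B}\|_{s'(\cdot)}$. The hypothesis \labelcref{inequ:lem-frac-Lie-lip-variable-norm} bounds $\|(b-A_{B})\dchi_{B}\|_{s(\cdot)}\le C|B|^{\beta/Q}\|\dchi_{B}\|_{s(\cdot)}$, and since $s(\cdot)\in\mathscr{B}(\mathbb{G})$ the duality estimate for characteristic functions (\cref{lem:norm-characteristic-Lie-liu2022characterisation}) gives $\frac{1}{|B|}\|\dchi_{B}\|_{s(\cdot)}\|\dchi_{B}\|_{s'(\cdot)}\le C$. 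Chaining these yields $\mathrm{I}\le C|B|^{\beta/Q}$, which is exactly the required control.

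The term $\mathrm{II}$ is where the fractional nature must be handled with care, and it is the only real obstacle: one cannot claim $A_{B}(x)\ge|b(x)|$ pointwise, since the fractional averages carry an extra factor $|B'|^{\alpha/Q}$ relative to ordinary averages that degenerates as the balls $B'$ shrink to $x$, so a direct pointwise domination fails. Instead I would exploit the vanishing mean $\dint_{B}(b(x)-b_{B})\,\mathd x=0$, which lets me rewrite $\dint_{B}(A_{B}-b_{B})\,\mathd x=\dint_{B}(A_{B}-b)\,\mathd x\le\dint_{B}|b-A_{B}|\,\mathd x$, so that $\mathrm{II}\le\mathrm{I}\le C|B|^{\beta/Q}$. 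Combining the two estimates gives $\frac{1}{|B|}\dint_{B}|b-b_{B}|\,\mathd x\le C|B|^{\beta/Q}$ uniformly in $B$, hence $b\in\lip_{\beta}(\mathbb{G})$ and therefore $b\in\Lambda_{\beta}(\mathbb{G})$, as desired.
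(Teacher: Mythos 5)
Your proof is correct, and the analytic core coincides with the paper's: both arguments reduce the Lipschitz-type quantity $\frac{1}{|B|^{1+\beta/Q}}\int_{B}|b-b_{B}|\,\mathd x$ to $\frac{C}{|B|^{1+\beta/Q}}\int_{B}\big|b-|B|^{-\alpha/Q}\mathcal{M}_{\alpha,B}(b)\big|\,\mathd x$ and then finish identically with the generalized H\"older inequality, the hypothesis \labelcref{inequ:lem-frac-Lie-lip-variable-norm}, and the duality bound $\frac{1}{|B|}\|\dchi_{B}\|_{L^{s(\cdot)}}\|\dchi_{B}\|_{L^{s'(\cdot)}}\le C$ from \cref{lem:norm-characteristic-Lie-liu2022characterisation}, concluding via \cref{lem:2.2-li2003lipschitz}. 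The only genuine difference is the elementary reduction step: the paper splits $B$ into $E=\{b\le b_{B}\}$ and $F=B\setminus E$, uses the equal-mass identity of \cref{lem:frac-max-pointwise-assert} to write $\int_{B}|b-b_{B}|=2\int_{E}|b-b_{B}|$, and dominates pointwise on $E$ via $b(x)\le b_{B}\le|b_{B}|\le A_{B}(x)$; you instead use the triangle inequality $|b-b_{B}|\le|b-A_{B}|+(A_{B}-b_{B})$ (the second term nonnegative by the same bound $A_{B}\ge|b_{B}|\ge b_{B}$ from \cref{lem:frac-max-pointwise-assert}) and then absorb the second term into the first through the mean-zero identity $\int_{B}(A_{B}-b_{B})\,\mathd x=\int_{B}(A_{B}-b)\,\mathd x\le\int_{B}|b-A_{B}|\,\mathd x$. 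Both routes produce the same factor $2$; yours avoids invoking the $E$/$F$ equal-mass lemma, while the paper's avoids the integration-against-the-mean manipulation, so the trade-off is purely cosmetic. Your cautionary remark that one cannot expect $A_{B}(x)\ge|b(x)|$ pointwise in the fractional case is accurate and correctly explains why some such global trick (yours or the paper's) is needed.
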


\begin{proof}
Some ideas are taken from \cite{bastero2000commutators,zhang2009commutators,zhang2014commutators} and  \cite{zhang2019some}.

For any   $\mathbb{G}$-ball $B\subset \mathbb{G}$, let $E=\{x\in B: b(x)\le b_{B}\}$ and $F=  B\setminus E =\{x\in B: b(x)> b_{B}\}$.
Noticing from \cref{lem:frac-max-pointwise-assert}(ii) that
\begin{align*}
    |b_{B}|   &\le  |B|^{-\alpha/Q} M_{\alpha,B}(b)(x) \qquad \forall ~ x\in B.
\end{align*}
Then, for any $x\in E\subset B$, we have $b(x)\le b_{B}\le  |b_{B}| \le   |B|^{-\alpha/Q} M_{\alpha,B}(b)(x)$.
It is clear that
\begin{align*}
    |b(x)- b_{B}|   &\le \Big| b(x) - |B|^{-\alpha/Q} M_{\alpha,B}(b)(x) \Big|, \qquad  \forall~ x\in E.
\end{align*}

Therefore, by using  \cref{lem:frac-max-pointwise-assert}(i),   we get
\begin{align*}
 \dfrac{1}{|B|^{1+\beta/Q}} \dint_{B} \big| b(x)-b_{B}) \big| \mathd x &=  \dfrac{1}{|B|^{1+\beta/Q}} \dint_{E\cup F} \big| b(x)-b_{B}) \big| \mathd x  \\
 &= \dfrac{2}{|B|^{1+\beta/Q}} \dint_{E} \big| b(x)-b_{B}) \big| \mathd x    \\
  &\le \dfrac{2}{|B|^{1+\beta/Q}} \dint_{E} \Big| b(x) - |B|^{-\alpha/Q} M_{\alpha,B}(b)(x) \Big| \mathd x    \\
  &\le  \dfrac{2}{|B|^{1+\beta/Q}} \dint_{B} \Big| b(x) - |B|^{-\alpha/Q} M_{\alpha,B}(b)(x) \Big| \mathd x .
\end{align*}

By using \cref{lem:holder-inequality-Lie-group} \labelcref{enumerate:variable-holder-Lie-1}, \labelcref{inequ:lem-frac-Lie-lip-variable-norm} and   \cref{lem:norm-characteristic-Lie-liu2022characterisation}  \labelcref{enumerate:charact-norm-Lie-variable-dual}, we have
\begin{align*}
 \dfrac{1}{|B|^{1+\beta/Q}} & \dint_{B} \big| b(x)-b_{B}) \big| \mathd x   \\
  &\le  \dfrac{2}{|B|^{1+\beta/Q}} \dint_{B} \Big| b(x) - |B|^{-\alpha/Q} M_{\alpha,B}(b)(x) \Big| \mathd x  \\
  &\le \dfrac{C}{|B|^{1+\beta/Q}} \Big\| \big(b -|B|^{-\alpha/Q}\mathcal{M}_{\alpha,B} (b) \big) \dchi_{B} \Big\|_{L^{s(\cdot)}(\mathbb{G}) }   \|\dchi_{B}\|_{L^{s'(\cdot)}(\mathbb{G}) } \\
  &\le  \dfrac{C}{|B|}  \|\dchi_{B}\|_{L^{s(\cdot)}(\mathbb{G}) }   \|\dchi_{B}\|_{L^{s'(\cdot)}(\mathbb{G}) }    \le C.
\end{align*}

So, the proof is completed by applying \cref{lem:2.2-li2003lipschitz} and \cref{def.lip-space}.
\end{proof}

 Now, we show the mapping properties of the  nonlinear  commutator  $ [b,\mathcal{M}_{\alpha}] $ on variable Lebesgue spaces over some stratified Lie group $\mathbb{G}$  when the symbol   $b$ belongs to some Lipschitz space.

\begin{proof}[Proof of  \cref{thm:nonlinear-frac-max-lip-variable}]
Since the implications \labelcref{enumerate:thm-nonlinear-frac-max-lip-2} $\xLongrightarrow{\ \  }$ \labelcref{enumerate:thm-nonlinear-frac-max-lip-3} and \labelcref{enumerate:thm-nonlinear-frac-max-lip-5} $\xLongrightarrow{\ \ }$ \labelcref{enumerate:thm-nonlinear-frac-max-lip-4} follow readily,
we only need to prove \labelcref{enumerate:thm-nonlinear-frac-max-lip-1} $\xLongrightarrow{\ \  }$ \labelcref{enumerate:thm-nonlinear-frac-max-lip-2}, \labelcref{enumerate:thm-nonlinear-frac-max-lip-3} $\xLongrightarrow{\ \  }$ \labelcref{enumerate:thm-nonlinear-frac-max-lip-4},  \labelcref{enumerate:thm-nonlinear-frac-max-lip-4} $\xLongrightarrow{\ \  }$  \labelcref{enumerate:thm-nonlinear-frac-max-lip-1}
and \labelcref{enumerate:thm-nonlinear-frac-max-lip-2} $\xLongrightarrow{\ \  }$ \labelcref{enumerate:thm-nonlinear-frac-max-lip-5}.

 \labelcref{enumerate:thm-nonlinear-frac-max-lip-1} $\xLongrightarrow{\ \  }$ \labelcref{enumerate:thm-nonlinear-frac-max-lip-2}:
 Let $b\in  \Lambda_{\beta}(\mathbb{G})$ and $b\ge 0$. We need to prove that $ [b,\mathcal{M}_{\alpha}] $ is bounded from $L^{p(\cdot)}(\mathbb{G})$ to $L^{q(\cdot)}(\mathbb{G})$ for all $(p(\cdot), q(\cdot))\in   \mathscr{B}^{\alpha+\beta}(\mathbb{G}) $. For such $p(\cdot)$ and any $f\in L^{p(\cdot)}(\mathbb{G})$, it
follows from \cref{lem:frac-max-almost-every} that $\mathcal{M}_{\alpha}(f)(x)<\infty $ for almost everywhere $x\in \mathbb{G}$. By using  \cref{lem:frac-maximal-pointwise}, we have
\begin{align*}
  \big| [b,\mathcal{M}_{\alpha}](f)(x)  \big| \le \|b\|_{\Lambda_{\beta} (\mathbb{G})}  \mathcal{M}_{\alpha+\beta} (f)(x).
\end{align*}
 Then, statement \labelcref{enumerate:thm-nonlinear-frac-max-lip-2} follows from  \cref{lem:frac-max-Lie}\labelcref{enumerate:lem-5-liu2019multilinear} and \cref{lem.cor-2.4-variable-max-bounded}.

\labelcref{enumerate:thm-nonlinear-frac-max-lip-3} $\xLongrightarrow{\ \  }$ \labelcref{enumerate:thm-nonlinear-frac-max-lip-4}:
Let  $(p(\cdot), q(\cdot))\in   \mathscr{B}^{\alpha+\beta}(\mathbb{G}) $ be such that $ [b,\mathcal{M}_{\alpha}] $ is bounded from $L^{p(\cdot)}(\mathbb{G})$ to $L^{q(\cdot)}(\mathbb{G})$. We will verify \labelcref{inequ:thm-nonlinear-frac-max-lip-4} for $s(\cdot) =q(\cdot)$.

For any fixed   ball $B\subset \mathbb{G}$ and any  $x \in B$, it follows from   \cref{lem:frac-max-pointwise-assert} \labelcref{enumerate:lem-2.11-wu2023some} that
\begin{align*}
   \mathcal{M}_{\alpha}(b\dchi_{B})(x)   =  \mathcal{M}_{\alpha,B}(b)(x)
 \ \text{and} \
    \mathcal{M}_{\alpha}(\dchi_{B})(x)   =  \mathcal{M}_{\alpha,B}(\dchi_{B})(x)=|B|^{\alpha/Q}.
\end{align*}
Then, for  any $x\in B$, we have
\begin{align*}
    b(x)-|B|^{-\alpha/Q} \mathcal{M}_{\alpha,B} (b)(x)   &= |B|^{-\alpha/Q} \big( b(x) |B|^{\alpha/Q} -\mathcal{M}_{\alpha,B} (b)(x) \big)    \\
   &=    |B|^{-\alpha/Q} \big( b(x)  \mathcal{M}_{\alpha}(\dchi_{B})(x) -\mathcal{M}_{\alpha}(b\dchi_{B})(x) \big)     \\
    &=    |B|^{-\alpha/Q}  [b,\mathcal{M}_{\alpha}] (\dchi_{B})(x).
\end{align*}
 Thus, for any $x\in \mathbb{G}$, we get
\begin{align*}
   \big( b(x)-|B|^{-\alpha/Q} \mathcal{M}_{\alpha,B} (b)(x) \big) \dchi_{B}(x) &=     |B|^{-\alpha/Q}  [b,\mathcal{M}_{\alpha}] (\dchi_{B})(x) \dchi_{B}(x).
\end{align*}
 Noticing from assertion  \labelcref{enumerate:thm-nonlinear-frac-max-lip-3} that $ [b,\mathcal{M}_{\alpha}] $ is bounded from $L^{p(\cdot)}(\mathbb{G})$ to $L^{q(\cdot)}(\mathbb{G})$ with  $1/q(\cdot) = 1/p(\cdot) -(\alpha+\beta)/Q$.  For any ball $B\subset \mathbb{G}$,  by   \cref{lem:norm-characteristic-Lie-liu2022characterisation}  \labelcref{enumerate:charact-norm-fraction-Lie-variable}, we have
\begin{align*}
 \Big\| \big(b -|B|^{-\alpha/Q}\mathcal{M}_{\alpha,B} (b) \big) \dchi_{B} \Big\|_{L^{q(\cdot)}(\mathbb{G}) }
 &\le  |B|^{-\alpha/Q} \big\| [b,\mathcal{M}_{\alpha}] (\dchi_{B}) \big\|_{L^{q(\cdot)}(\mathbb{G}) }  \\
&\le C |B|^{-\alpha/Q} \big\|  \dchi_{B} \big\|_{L^{p(\cdot)}(\mathbb{G}) }  \\
 &\le C |B|^{-\alpha/Q}   |B|^{(\alpha+\beta)/Q}  \big\|  \dchi_{B} \big\|_{L^{q(\cdot)}(\mathbb{G}) }   \\
 &\le C   |B|^{ \beta/Q}  \big\|  \dchi_{B} \big\|_{L^{q(\cdot)}(\mathbb{G}) },
\end{align*}
which gives \labelcref{inequ:thm-nonlinear-frac-max-lip-4} since   $B$ is arbitrary and $C$ is independent of $B$.

\labelcref{enumerate:thm-nonlinear-frac-max-lip-4} $\xLongrightarrow{\ \  }$  \labelcref{enumerate:thm-nonlinear-frac-max-lip-1}:
By  \cref{lem:non-negative-max-lip}, it suffices to prove
\begin{align} \label{inequ:proof-lem-non-negative-max-lip-41}
 \sup_{B} \dfrac{1}{|B|^{1+\beta/Q}} \dint_{B} \Big|b(y)- \mathcal{M}_{B} (b)(y) \Big|  \mathd y <\infty.
\end{align}

  For any fixed   ball $B\subset \mathbb{G}$, we have
\begin{align} \label{inequ:proof-lem-non-negative-max-lip-41-1}
\begin{split}
  & \dfrac{1}{|B|^{1+\beta/Q}} \dint_{B} \Big|b(y)- \mathcal{M}_{B} (b)(y) \Big|  \mathd y  \\
    &\le  \dfrac{1}{|B|^{1+\beta/Q}} \dint_{B} \Big|b(y)- |B|^{-\alpha/Q}\mathcal{M}_{\alpha,B} (b) (y)  \Big|  \mathd y     \\
    &\qquad +  \dfrac{1}{|B|^{1+\beta/Q}} \dint_{B} \Big|  |B|^{-\alpha/Q}\mathcal{M}_{\alpha,B} (b) (y)-\mathcal{M}_{B} (b)(y) \Big|  \mathd y    \\
     &:=  I_{1}+I_{2}.
\end{split}
\end{align}
For $ I_{1}$, by applying statement \labelcref{enumerate:thm-nonlinear-frac-max-lip-4},   \cref{lem:holder-inequality-Lie-group} \labelcref{enumerate:variable-holder-Lie-1}  and   \cref{lem:norm-characteristic-Lie-liu2022characterisation}  \labelcref{enumerate:charact-norm-Lie-variable-dual}, we get
\begin{align*}
 I_{1} &=   \dfrac{1}{|B|^{1+\beta/Q}} \dint_{B} \Big|b(y)- |B|^{-\alpha/Q}\mathcal{M}_{\alpha,B} (b) (y)  \Big|  \mathd y     \\
 &\le  \dfrac{C}{|B|^{1+\beta/Q}} \Big\| \big(b -|B|^{-\alpha/Q}\mathcal{M}_{\alpha,B} (b) \big) \dchi_{B} \Big\|_{L^{s(\cdot)}(\mathbb{G}) }  \|\dchi_{B}\|_{L^{s'(\cdot)}(\mathbb{G}) } \\
  &\le  \dfrac{C}{|B|^{\beta/Q}}\dfrac{ \Big\| \big(b -|B|^{-\alpha/Q}\mathcal{M}_{\alpha,B} (b) \big) \dchi_{B} \Big\|_{L^{s(\cdot)}(\mathbb{G}) }}{\|\dchi_{B}\|_{L^{s(\cdot)}(\mathbb{G}) }}   \\
&\le C,
\end{align*}
where the constant $C$ is independent of $B$.

 Now we consider $ I_{2}$. For all  $y\in B$,  it follows from \cref{lem:frac-max-pointwise-assert}  \labelcref{enumerate:lem-2.11-wu2023some} that
\begin{align*}
   \mathcal{M}_{\alpha}(\dchi_{B})(y)   =  |B|^{\alpha/Q} \ \text{and}\  \mathcal{M}_{\alpha}(b\dchi_{B})(y)   =  \mathcal{M}_{\alpha,B}(b)(y),
\\ \intertext{and}
    \mathcal{M}(\dchi_{B})(y)   = \dchi_{B}(y)   =  1 \ \text{and}\  \mathcal{M}(b\dchi_{B})(y)   =  \mathcal{M}_{B}(b)(y).
\end{align*}
Then, for any $y\in B$,  we get
\begin{align} \label{inequ:proof-lem-non-negative-max-lip-41-2}
\begin{split}
  &\Big|  |B|^{-\alpha/Q}\mathcal{M}_{\alpha,B} (b) (y)-\mathcal{M}_{B} (b)(y) \Big|  \\
    &\le  |B|^{-\alpha/Q}  \Big|  \mathcal{M}_{\alpha,B} (b) (y)- |B|^{\alpha/Q} |b(y)|  \Big|  + \Big|  |b(y)|-\mathcal{M}_{B} (b)(y) \Big|    \\
    &\le  |B|^{-\alpha/Q}  \Big|   \mathcal{M}_{\alpha}(b\dchi_{B})(y)-  |b(y)| \mathcal{M}_{\alpha}(\dchi_{B})(y) \Big|  \\
    &\qquad + \Big|  |b(y)|\mathcal{M}(\dchi_{B})(y) -\mathcal{M}(b\dchi_{B})(y)  \Big|    \\
  &\le  |B|^{-\alpha/Q}  \Big|[|b|, \mathcal{M}_{\alpha}](\dchi_{B})(y) \Big|  + \Big| [ |b|, \mathcal{M}] (\dchi_{B})(y)   \Big|.
\end{split}
\end{align}

Since  $ s(\cdot)\in   \mathscr{B}(\mathbb{G}) $, statement \labelcref{enumerate:thm-nonlinear-frac-max-lip-4} along with \cref{lem:frac-Lie-lip-variable-norm} gives  $b\in  \Lambda_{\beta}(\mathbb{G})$, which implies $|b|\in  \Lambda_{\beta}(\mathbb{G})$.
Therefore, we can apply \cref{lem:frac-maximal-pointwise} to $[|b|, \mathcal{M}_{\alpha}]$ and $[ |b|, \mathcal{M}]$ due to  $|b|\in  \Lambda_{\beta}(\mathbb{G})$ and $|b|\ge0$.

By using  \cref{lem:frac-maximal-pointwise} and \cref{lem:frac-max-pointwise-assert}  \labelcref{enumerate:lem-2.11-wu2023some}, for any $y\in B$,  we have
\begin{align*}
  \Big|[|b|, \mathcal{M}_{\alpha}](\dchi_{B})(y) \Big| &\le \|b\|_{\Lambda_{\beta} (\mathbb{G})}  \mathcal{M}_{\alpha+\beta} (\dchi_{B})(y) \le C \|b\|_{\Lambda_{\beta} (\mathbb{G})} |B|^{(\alpha+\beta)/Q}
\\ \intertext{and}
 \Big| [ |b|, \mathcal{M}] (\dchi_{B})(y)   \Big| &\le \|b\|_{\Lambda_{\beta} (\mathbb{G})}  \mathcal{M}_{\beta} (\dchi_{B})(y) \le C \|b\|_{\Lambda_{\beta} (\mathbb{G})} |B|^{\beta/Q}.
\end{align*}

Hence, it follows from  \labelcref{inequ:proof-lem-non-negative-max-lip-41-2} that
\begin{align*}
 I_{2} &=   \dfrac{1}{|B|^{1+\beta/Q}} \dint_{B} \Big|  |B|^{-\alpha/Q}\mathcal{M}_{\alpha,B} (b) (y)-\mathcal{M}_{B} (b)(y) \Big|  \mathd y    \\
 &\le \dfrac{C}{|B|^{1+(\alpha+\beta)/Q}} \dint_{B}  \Big|[|b|, \mathcal{M}_{\alpha}](\dchi_{B})(y) \Big| \mathd y   +\dfrac{C}{|B|^{1+\beta/Q}} \dint_{B} \Big|  [ |b|, \mathcal{M}] (\dchi_{B})(y)   \Big|  \mathd y    \\
     &\le C \|b\|_{\Lambda_{\beta} (\mathbb{G})}.
\end{align*}

Putting the above estimates for $I_{1}$ and $I_{2}$ into \labelcref{inequ:proof-lem-non-negative-max-lip-41-1}, we obtain \labelcref{inequ:proof-lem-non-negative-max-lip-41}.

\labelcref{enumerate:thm-nonlinear-frac-max-lip-2} $\xLongrightarrow{\ \  }$ \labelcref{enumerate:thm-nonlinear-frac-max-lip-5}:
Assume statement \labelcref{enumerate:thm-nonlinear-frac-max-lip-2} is true. Reasoning as in the proof of \labelcref{enumerate:thm-nonlinear-frac-max-lip-3} $\xLongrightarrow{\ \  }$ \labelcref{enumerate:thm-nonlinear-frac-max-lip-4}, we have
\begin{align} \label{inequ:proof-lem-non-negative-max-lip-25}
\sup_{B\subset \mathbb{G}} \dfrac{1}{|B|^{\beta/Q}} \dfrac{\Big\| \big(b -|B|^{-\alpha/Q}\mathcal{M}_{\alpha,B} (b) \big) \dchi_{B} \Big\|_{L^{q(\cdot)}(\mathbb{G}) }}{\|\dchi_{B}\|_{L^{q(\cdot)}(\mathbb{G}) }}  < \infty
\end{align}
holds for any $q(\cdot)$ for which there exists  a $p(\cdot)$ such that $(p(\cdot), q(\cdot))\in   \mathscr{B}^{\alpha+\beta}(\mathbb{G}) $.

On the other hand, for any    $s(\cdot) \in   \mathscr{B} (\mathbb{G}) $, choosing an $r>Q/(Q-\beta)>1$, it is easy to see from \cref{rem.variable-max-bounded} that  $rs(\cdot) (Q-\beta)/Q\in   \mathscr{B} (\mathbb{G}) $ and  $rs(\cdot) \in   \mathscr{B} (\mathbb{G}) $.
Set  $q(\cdot) =rs(\cdot)$ and define $p(\cdot)$  by $1/p(\cdot) = 1/q(\cdot)+(\alpha+\beta)/Q$. it is easy to verify that  $(p(\cdot), q(\cdot))\in   \mathscr{B}^{\alpha+\beta}(\mathbb{G}) $.

Noticing that
\begin{align*}
  \frac{1}{s(\cdot)}=\frac{1}{r s(\cdot)}+ \frac{1}{r' s(\cdot)}  =\frac{1}{q(\cdot)}+ \frac{1}{r' s(\cdot)}.
\end{align*}
Hence, it follows from \cref{lem:holder-inequality-Lie-group} \labelcref{enumerate:variable-holder-Lie}, \labelcref{inequ:proof-lem-non-negative-max-lip-25} and  \cref{lem:identity-homogeneous-variable-Lie}  that
\begin{align*}
  \dfrac{1}{|B|^{\beta/Q}} & \dfrac{\Big\| \big(b -|B|^{-\alpha/Q}\mathcal{M}_{\alpha,B} (b) \big) \dchi_{B} \Big\|_{L^{s(\cdot)}(\mathbb{G}) }}{\|\dchi_{B}\|_{L^{s(\cdot)}(\mathbb{G}) }}    \\
  &\le   \dfrac{1}{|B|^{\beta/Q}}   \dfrac{\Big\| \big(b -|B|^{-\alpha/Q}\mathcal{M}_{\alpha,B} (b) \big) \dchi_{B} \Big\|_{L^{q(\cdot)}(\mathbb{G}) }  \| \dchi_{B} \|_{L^{r's(\cdot)}(\mathbb{G}) } }{\|\dchi_{B}\|_{L^{s(\cdot)}(\mathbb{G}) }}       \\
  &\le      \dfrac{C\| \dchi_{B}  \|_{L^{q(\cdot)}(\mathbb{G}) }  \| \dchi_{B} \|_{L^{r's(\cdot)}(\mathbb{G}) } }{\|\dchi_{B}\|_{L^{s(\cdot)}(\mathbb{G}) }}   \\
  &=   \dfrac{C\| \dchi_{B}  \|_{L^{s(\cdot)}(\mathbb{G}) }^{1/r}  \| \dchi_{B} \|_{L^{s(\cdot)}(\mathbb{G}) }^{1/r'} }{\|\dchi_{B}\|_{L^{s(\cdot)}(\mathbb{G}) }} =C,
\end{align*}
which gives \labelcref{inequ:thm-nonlinear-frac-max-lip-4} since $B$ is arbitrary and $C$ is independent of $B$.

 This completes the proof of \cref{thm:nonlinear-frac-max-lip-variable}.
\end{proof}

\begin{remark}   \label{rem.variable-max-bounded}
 The proof of  \labelcref{enumerate:thm-nonlinear-frac-max-lip-2} $\xLongrightarrow{\ \  }$ \labelcref{enumerate:thm-nonlinear-frac-max-lip-5} is also valid for  $ \beta=0$.
\end{remark}

\subsection{Proof of \cref{thm:frac-max-lip-variable}}

Now, we give the proof of necessary and sufficient conditions for the boundedness of the maximal commutator $\mathcal{M}_{\alpha,b}$ on variable Lebesgue spaces in the context of  stratified Lie group when $b$ belongs to a Lipschitz space.

\begin{proof}[Proof of  \cref{thm:frac-max-lip-variable}]
Similar to the proof of  \cref{thm:nonlinear-frac-max-lip-variable},
we only need to prove \labelcref{enumerate:thm-frac-max-lip-1} $\xLongrightarrow{\ \  }$ \labelcref{enumerate:thm-frac-max-lip-2}, \labelcref{enumerate:thm-frac-max-lip-3} $\xLongrightarrow{\ \  }$ \labelcref{enumerate:thm-frac-max-lip-4},  \labelcref{enumerate:thm-frac-max-lip-4} $\xLongrightarrow{\ \  }$  \labelcref{enumerate:thm-nonlinear-frac-max-lip-1}
and \labelcref{enumerate:thm-frac-max-lip-2} $\xLongrightarrow{\ \  }$ \labelcref{enumerate:thm-frac-max-lip-5}.

 \labelcref{enumerate:thm-frac-max-lip-1} $\xLongrightarrow{\ \  }$ \labelcref{enumerate:thm-frac-max-lip-2}:
If $b\in  \Lambda_{\beta}(\mathbb{G})$, then, for any ball $B\subset \mathbb{G}$ containing $x$ and $y$, using \cref{lem:2.2-li2003lipschitz} \labelcref{enumerate:property-lip-Lie-1,enumerate:property-lip-Lie-3}, we have
\begin{align*}
 \mathcal{M}_{\alpha,b} (f)(x) &= \sup_{B\ni x \atop B\subset \mathbb{G}}  \dfrac{1}{|B|^{1-\alpha/Q}} \dint_{B} |b(x)-b(y)| |f(y)| \mathd y   \\
 &\le C  \|b\|_{\Lambda_{\beta}(\mathbb{G})}  \sup_{B\ni x \atop B\subset \mathbb{G}} \dfrac{1}{|B|^{1-(\alpha+\beta)/Q}} \dint_{B}   |f(y)| \mathd y   \\
 &\le C  \|b\|_{\Lambda_{\beta}(\mathbb{G})}  \mathcal{M}_{\alpha+\beta} (f)(x).
\end{align*}

This, together with \cref{lem:frac-max-Lie} \labelcref{enumerate:lem-5-liu2019multilinear}, shows that $\mathcal{M}_{\alpha,b}$ is bounded from  $ L^{p(\cdot)}(\mathbb{G})$ to  $ L^{q(\cdot)}(\mathbb{G})$.

\labelcref{enumerate:thm-frac-max-lip-3} $\xLongrightarrow{\ \  }$ \labelcref{enumerate:thm-frac-max-lip-4}:
Let  $(p(\cdot), q(\cdot))\in   \mathscr{B}^{\alpha+\beta}(\mathbb{G}) $ be such that $\mathcal{M}_{\alpha,b}$  is bounded from $L^{p(\cdot)}(\mathbb{G})$ to $L^{q(\cdot)}(\mathbb{G})$. We will verify \labelcref{inequ:thm-frac-max-lip-4} for $s(\cdot) =q(\cdot)$.

For any fixed   ball $B\subset \mathbb{G}$ and any  $x \in B$,   we have
\begin{align*}
    \big| b(x)-b_{B}\big|    &\le |B|^{-1} \dint_{B} \big| b(x)  - b(y) \big| \mathd y   \\
   &=    |B|^{-1} \dint_{B} \big| b(x)  - b(y) \big| \dchi_{B} (y)\mathd y     \\
    &\le    |B|^{-\alpha/Q}  \mathcal{M}_{\alpha,b}  (\dchi_{B})(x).
\end{align*}
 Then, for all $x\in \mathbb{G}$, we get
\begin{align*}
  \Big| \big( b(x)-b_{B} \big) \dchi_{B}(x)\Big| \le    |B|^{-\alpha/Q}  \mathcal{M}_{\alpha,b}  (\dchi_{B})(x).
\end{align*}
Since $\mathcal{M}_{\alpha,b}$ is bounded from  $ L^{p(\cdot)}(\mathbb{G})$ to  $ L^{q(\cdot)}(\mathbb{G})$, by \cref{lem:norm-characteristic-Lie-liu2022characterisation} \labelcref{enumerate:charact-norm-fraction-Lie-variable}, we obtain
\begin{align*}
 \Big\| \big( b -b_{B} \big) \dchi_{B} \Big\|_{L^{q(\cdot)}(\mathbb{G}) }
 &\le  |B|^{-\alpha/Q} \big\| \mathcal{M}_{\alpha,b}  (\dchi_{B}) \big\|_{L^{q(\cdot)}(\mathbb{G}) }  \\
&\le C |B|^{-\alpha/Q} \big\|  \dchi_{B} \big\|_{L^{p(\cdot)}(\mathbb{G}) }  \\
 &\le C   |B|^{ \beta/Q}  \big\|  \dchi_{B} \big\|_{L^{q(\cdot)}(\mathbb{G}) },
\end{align*}
which gives \labelcref{inequ:thm-frac-max-lip-4} for $s(\cdot) =q(\cdot)$ since   $B$ is arbitrary and $C$ is independent of $B$.

\labelcref{enumerate:thm-frac-max-lip-4} $\xLongrightarrow{\ \  }$  \labelcref{enumerate:thm-frac-max-lip-1}:
 For any    ball $B\subset \mathbb{G}$,  by using  generalized H\"{o}lder's inequality \cref{lem:holder-inequality-Lie-group}\labelcref{enumerate:variable-holder-Lie-1},  assertion  \labelcref{enumerate:thm-frac-max-lip-4} and  \cref{lem:norm-characteristic-Lie-liu2022characterisation} \labelcref{enumerate:charact-norm-Lie-variable-dual}, we obtain
\begin{align*}
 \dfrac{1}{|B|^{1+\beta/Q}} \dint_{B} \big| b(y)-b_{B}) \big| \mathd y
  &\le \dfrac{C}{|B|^{1+\beta/Q}} \big\| \big(b-b_{B} \big)\dchi_{B}  \big\|_{L^{q(\cdot)}(\mathbb{G}) }   \|\dchi_{B}  \|_{L^{q'(\cdot)}(\mathbb{G})}   \\
   &=  \dfrac{C}{|B|^{\beta/Q}} \dfrac{\big\| \big(b-b_{B} \big)\dchi_{B}  \big\|_{L^{q(\cdot)}(\mathbb{G}) }}{\|\dchi_{B}  \|_{L^{q(\cdot)}(\mathbb{G})} }       \\
  &\qquad \times    \dfrac{1}{|B|}  \|\dchi_{B}  \|_{L^{q(\cdot)}(\mathbb{G})} \|\dchi_{B}  \|_{L^{q'(\cdot)}(\mathbb{G})}    \\
&\le  \dfrac{C}{|B|^{\beta/Q}} \dfrac{\big\| \big(b-b_{B} \big)\dchi_{B}  \big\|_{L^{q(\cdot)}(\mathbb{G}) }}{\|\dchi_{B}  \|_{L^{q(\cdot)}(\mathbb{G})} }       \\
&\le C.
\end{align*}

 This shows that $b\in  \Lambda_{\beta}(\mathbb{G})$ by \cref{lem:2.2-li2003lipschitz} \labelcref{enumerate:property-lip-Lie-1} and  \cref{def.lip-space} since the constant $C$ is independent of $B$.

\labelcref{enumerate:thm-frac-max-lip-2} $\xLongrightarrow{\ \  }$ \labelcref{enumerate:thm-frac-max-lip-5}:
Similar to the proof course of   \cref{thm:nonlinear-frac-max-lip-variable}.
Assume statement \labelcref{enumerate:thm-frac-max-lip-2} is true. Reasoning as in the proof of \labelcref{enumerate:thm-frac-max-lip-3} $\xLongrightarrow{\ \  }$ \labelcref{enumerate:thm-frac-max-lip-4}, we have
\begin{align} \label{inequ:proof-frac-max-lip-25}
\sup_{B\subset \mathbb{G}} \dfrac{1}{|B|^{\beta/Q}} \dfrac{\Big\| \big(b -b_{B}   \big) \dchi_{B} \Big\|_{L^{q(\cdot)}(\mathbb{G}) }}{\|\dchi_{B}\|_{L^{q(\cdot)}(\mathbb{G}) }}  &< \infty
\end{align}
holds for any $q(\cdot)$ for which there exists  a $p(\cdot)$ such that $(p(\cdot), q(\cdot))\in   \mathscr{B}^{\alpha+\beta}(\mathbb{G}) $.


On the other hand, let $s(\cdot)$, $q(\cdot)$ and $r$ be given in \labelcref{enumerate:thm-nonlinear-frac-max-lip-2} $\xLongrightarrow{\ \  }$ \labelcref{enumerate:thm-nonlinear-frac-max-lip-5} of \cref{thm:nonlinear-frac-max-lip-variable} and satisfy
\begin{align*}
  \frac{1}{s(\cdot)}=\frac{1}{r s(\cdot)}+ \frac{1}{r' s(\cdot)}  =\frac{1}{q(\cdot)}+ \frac{1}{r' s(\cdot)}.
\end{align*}
Thus, it follows from \cref{lem:holder-inequality-Lie-group} \labelcref{enumerate:variable-holder-Lie}, \labelcref{inequ:proof-frac-max-lip-25} and  \cref{lem:identity-homogeneous-variable-Lie}  that
\begin{align*}
  \dfrac{1}{|B|^{\beta/Q}} \dfrac{\Big\| \big(b -b_{B}   \big) \dchi_{B} \Big\|_{L^{s(\cdot)}(\mathbb{G}) }}{\|\dchi_{B}\|_{L^{s(\cdot)}(\mathbb{G}) }}
  &\le   \dfrac{1}{|B|^{\beta/Q}}   \dfrac{\Big\| \big(b -b_{B}   \big) \dchi_{B} \Big\|_{L^{q(\cdot)}(\mathbb{G}) }  \| \dchi_{B} \|_{L^{r's(\cdot)}(\mathbb{G}) } }{\|\dchi_{B}\|_{L^{s(\cdot)}(\mathbb{G}) }}       \\
  &\le      \dfrac{C\| \dchi_{B}  \|_{L^{q(\cdot)}(\mathbb{G}) }  \| \dchi_{B} \|_{L^{r's(\cdot)}(\mathbb{G}) } }{\|\dchi_{B}\|_{L^{s(\cdot)}(\mathbb{G}) }}   \\
  &=   \dfrac{C\| \dchi_{B}  \|_{L^{s(\cdot)}(\mathbb{G}) }^{1/r}  \| \dchi_{B} \|_{L^{s(\cdot)}(\mathbb{G}) }^{1/r'} }{\|\dchi_{B}\|_{L^{s(\cdot)}(\mathbb{G}) }} =C,
\end{align*}
which gives  \labelcref{inequ:thm-frac-max-lip-4} since $B$ is arbitrary and $C$ is independent of $B$.

 The  proof of \cref{thm:frac-max-lip-variable} is finished.
\end{proof}



 \subsubsection*{Funding information:}
 Zhao is  financially supported by the  Scientific Research Fund of AHPU (No.S022022177).
Wu is supported in parts by the NNSF of China (No.11571160),  the Science and Technology Fund of Heilongjiang  (No.2019-KYYWF-0909),   the Reform and Development Foundation for Local Colleges and Universities of the Central Government (No.2020YQ07) and the Scientific Research Fund of MNU (No.D211220637).

 \subsubsection*{Conflict of interest: }
The authors state that there is no conflict of interest.   

 \subsubsection*{Data availability statement:}
  All data generated or analysed during this study are included in this published article.
 \subsubsection*{Author contributions:}
 All authors contributed equally to the writing of this article.
 All authors read the final manuscript and approved its submission.

%
%
%


\phantomsection
\addcontentsline{toc}{section}{References}
\bibliographystyle{tugboat}          
\bibliography{wu-reference}

\end{document}